\newcommand{\whd}{Wh^{\Diff}}
\newcommand{\fu}{Cu}
\newcommand{\fubar}{\overline{Cu}\strut}
\newcommand{\tK}{K^{\mathrm{red}}}
\newcommand{\trc}[1]{\mathit{trc}_{#1}}
\newcommand{\tLK}{L_{K(1)}\tK}
\let\iso\cong
\let\sma\wedge
\renewcommand{\to}{\mathchoice{\longrightarrow}{\rightarrow}{\rightarrow}{\rightarrow}}
\newcommand{\from}{\mathchoice{\longleftarrow}{\leftarrow}{\leftarrow}{\leftarrow}}
\newcommand{\sto}{\rightarrow}
\newcommand{\overto}[1]{\xrightarrow{\,#1\,}}
\newcommand{\phat}{^{\scriptscriptstyle\wedge}_{p}}
\let\catsymbfont\mathcal
\newcommand{\aK}{{\catsymbfont{K}}}
\newcommand{\bC}{{\mathbb{C}}}
\newcommand{\bCPi}{\bC P^{\infty}}
\newcommand{\bCPim}{\bC P^{\infty}_{-1}}
\newcommand{\bCPimbar}{\overline{\bC P}\mathstrut^{\infty}_{-1}}
\newcommand{\bF}{{\mathbb{F}}}
\newcommand{\bS}{{\mathbb{S}}}
\newcommand{\bZ}{{\mathbb{Z}}}
\newcommand{\bZp}{{\mathbb{Z}}\phat}
\newcommand{\bZpt}{({\mathbb{Z}}\phat)^{\times}}
\newcommand{\bQ}{{\mathbb{Q}}}
\newcommand{\bQp}{{\mathbb{Q}}\phat}
\newcommand{\oO}{\mathcal{O}}
\def\quickop#1{\expandafter\DeclareMathOperator\csname
#1\endcsname{#1}}
\newcommand{\ptor}{\tor_{p}}
\newtheorem{thm}{Theorem}
\numberwithin{thm}{section}
\newtheorem{cor}[thm]{Corollary}
\newtheorem{conj}[thm]{Conjecture}
\newtheorem{lem}[thm]{Lemma}
\theoremstyle{definition}
\newtheorem{prop}[thm]{Proposition}
\theoremstyle{remark}
\newcommand{\term}[1]{\textit{#1}}
\let\c@equation\c@thm\makeatother
\renewcommand{\labelenumi}{(\theenumi)}
\def\texorpdfstring#1#2{#1}\message{No texorpdfstring^^J}\fi
\begin{document}

\title[The homotopy groups of $K(\bS)$]
{The homotopy groups of the algebraic $K$-theory of the sphere spectrum%
\footnotemark}

\author{Andrew J. Blumberg}
\address{Department of Mathematics, The University of Texas,
Austin, TX \ 78712}
\email{blumberg@math.utexas.edu}
\author{Michael A. Mandell}
\address{Department of Mathematics, Indiana University,
Bloomington, IN \ 47405}
\email{mmandell@indiana.edu}

\subjclass[2010]{Primary 19D10.}

\keywords{Algebraic $K$-theory of spaces, stable pseudo-isotopy theory, Whitehead space, cyclotomic trace.}

\begin{abstract}
We calculate $\pi_{*}K(\bS)\otimes \bZ[1/2]$, the homotopy groups of
$K(\bS)$ away from $2$, in terms of the homotopy 
groups of $K(\bZ)$, the homotopy groups of $\bCPim$, and the homotopy
groups of $\bS$.  This builds on work of Waldhausen, who
computed the rational homotopy groups (building on work of Quillen and
Borel) and Rognes, who calculated the groups at odd regular
primes in terms of the homotopy groups of $\bCPim$, and the homotopy
groups of $\bS$.
\end{abstract}

\maketitle

\footnotetext
{This is substantially the same as the work with identical title
and authors first published in \textit{Geometry \& Topology}~23(1) (2019),
published by Mathematical Sciences Publishers.\par \copyright\ 2019
Mathematical Sciences Publishers.}

\section{Introduction}

The algebraic $K$-theory of the sphere spectrum $K(\bS)$ is
Waldhausen's $A(*)$, the algebraic $K$-theory of the one-point
space.   The underlying infinite loop space of $K(\bS)$ splits as a
copy of the underlying infinite loop space of $\bS$ and the smooth
Whitehead space of a point $\whd(*)$.  For a 
highly-connected compact manifold $M$, the second loop space of $\whd(*)$ 
approximates the stable concordance space of $M$,
and the loop space of $\whd(*)$ parametrizes stable
$h$-cobordisms~\cite{WaldhausenJahrenRognes}.  As a consequence, computation of
the algebraic $K$-theory of the sphere spectrum is a fundamental
problem in algebraic and differential topology.

Early efforts in this direction were carried out by Waldhausen in
the 1980s using the ``linearization'' map $K(\bS) \to K(\bZ)$ from
the $K$-theory of the sphere spectrum to the $K$-theory of the
integers.  This is induced by the map of (highly structured) ring
spectra $\bS \to \bZ$.  Waldhausen showed that because the map $\bS\to
\bZ$ is an isomorphism on homotopy groups in degree zero (and below)
and a rational equivalence on higher homotopy groups, the map $K(\bS)\to
K(\bZ)$ is a rational equivalence.  Borel's computation of the
rational homotopy groups of $K(\bZ)$ then applies to calculate the
rational homotopy groups of $K(\bS)$: $\pi_{q}K(\bZ)\otimes \bQ$ is
rank 1 when $q=0$ or $q=4k+1$ for $k>1$ and is zero for all other $q$.

In the 1990s, work of B\"okstedt, Carlsson, Cohen, Goodwillie, Hsiang, and Madsen
revolutionized the computation of algebraic $K$-theory with the
introduction and study of \emph{topological cyclic homology} $TC$, an
analogue of negative cyclic homology that can be computed using the
methods of equivariant stable homotopy theory.  $TC$ is the target of
the \emph{cyclotomic trace}, a natural transformation $K\to TC$.  For
a map of highly structured ring spectra such as $\bS\to \bZ$,
naturality gives a diagram
\[
\xymatrix{
K(\bS) \ar[r] \ar[d] & TC(\bS) \ar[d] \\
K(\bZ) \ar[r] & TC(\bZ),
}
\]
the \emph{linearization/cyclotomic trace square}.
A foundational theorem of Dundas~\cite{Dundas-RelK} (building on work of
McCarthy~\cite{McCarthy} and Goodwillie~\cite{GoodwillieHN}) states
that the square above becomes homotopy
cartesian after $p$-completion, which means that the maps of homotopy
fibers become weak equivalences after $p$-completion.

In the 2000s, Rognes~\cite{Rognesp} used the
linearization/cyclotomic trace square to compute the homotopy groups
of $K(\bS)$ at odd regular primes in terms of the homotopy groups of $\bS$
and the homotopy groups of $\mathbb{C}P^{\infty}_{-1}$ (assuming the
now affirmed Quillen-Lichtenbaum conjecture).  The
answer is easiest to express in terms of the torsion subgroups:
Because $\pi_{n}K(\bS)$ is finitely generated, it is the direct sum of
a free part and a torsion part, the free part being $\bZ$ when $n=0$ or
$n\equiv 1\mod 4$, $n>1$, and $0$ otherwise.  The
main theorem of Rognes~\cite{Rognesp} is that for $p$ an odd regular prime
the $p$-torsion of $\pi_{*}K(\bS)$ is
\[
\ptor(\pi_{*}K(\bS))\iso 
\ptor(\pi_{*}\bS \oplus \pi_{*-1}c \oplus \pi_{*-1}\bCPimbar).
\]
(which can be made canonical, as discussed below).  Here $c$ denotes
the additive $p$-complete cokernel of $J$ spectrum (the connected
cover of the homotopy fiber of the map $\bS\phat\to L_{K(1)}\bS$); its
homotopy groups are all torsion and are direct summands of
$\pi_{*}\bS$.  The spectrum $\bCPimbar$ is a wedge summand of
$(\bCPim)\phat$~\cite[(1.3)]{MadsenSchlitkrull},
\cite[p.166]{Rognesp}, $(\bCPim)\phat\simeq \bCPimbar \vee \bS\phat$;
using unpublished work of Knapp, Rognes~\cite[4.7]{Rognesp} calculates
the order of these torsion groups in degrees $\leq 2(2p+1)(p-1)-4$.

The description of $\ptor(\pi_* K(\bS))$ above uses an
identification of the homotopy type of $TC(\bS)\phat$ as 
$\bS\phat \vee \Sigma\bCPim$ due to B\"okstedt-Hsiang-Madsen~\cite[5.15,5.17]{BHM}.
Unfortunately, there does not seem to be a canonical map realizing
this weak equivalence, and so the isomorphism above is not canonical.
We do have a canonical splitting $TC(\bS)\simeq \bS \vee \fu$ where
$\fu$ is the homotopy cofiber of the unit map $\bS\to TC(\bS)$, or
equivalently, the homotopy fiber of the map $TC(\bS)\to THH(\bS)$.
As we discuss below, the splitting of the $p$-completion $\fu\phat$,
\[
\fu\phat\simeq \fubar \vee \Sigma \bS\phat
\]
corresponding to the splitting $(\bCPim)\phat\simeq \bCPimbar\vee \bS\phat$ turns out to be canonical.  Rognes' canonical
identification of the $p$-torsion of $\pi_{*}K(\bS)$ is then
\[
\ptor(\pi_{*}K(\bS))\iso 
\ptor(\pi_{*}\bS \oplus \pi_{*-1}c \oplus \pi_{*}\fubar).
\]

This paper computes $\pi_* K(\bS)\phat$ in the case of irregular
primes, thereby completing the computation of the homotopy groups of
the algebraic $K$-theory of the sphere spectrum away from the prime $2$.  
As a first step, we
prove the following splitting theorem in Section~\ref{sec:proof}.

\begin{thm}\label{thm:mainles}
Let $p$ be an odd prime.  The long exact sequence on homotopy groups
induced by the $p$-completed linearization/cyclotomic trace square
breaks up into non-canonically split short exact sequences
\[
0\to \pi_{*}K(\bS)\phat\to \pi_{*}TC(\bS)\phat\oplus \pi_{*}K(\bZ)\phat\to
\pi_{*}TC(\bZ)\phat\to 0.
\]
\end{thm}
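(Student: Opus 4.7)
The plan is to reduce the assertion, via Dundas's theorem, to a statement about a single difference map on homotopy groups. Since the square is homotopy cartesian after $p$-completion by \cite{Dundas-RelK}, one obtains a Mayer--Vietoris long exact sequence
\[
\cdots \to \pi_{n}K(\bS)\phat \to \pi_{n}TC(\bS)\phat \oplus \pi_{n}K(\bZ)\phat \overto{\delta} \pi_{n}TC(\bZ)\phat \to \pi_{n-1}K(\bS)\phat \to \cdots,
\]
where $\delta$ is the difference of the natural maps. The theorem is equivalent to the single assertion that $\delta$ is \emph{split} surjective in every degree. I would separate this into (A) surjectivity of $\delta$, which makes the long exact sequence break into short exact sequences, and (B) the existence of a splitting.

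For (A), I would analyze the cokernel of the cyclotomic trace $K(\bZ)\phat \to TC(\bZ)\phat$ and show that whatever survives is hit by $TC(\bS)\phat$. By known calculations of $TC(\bZ)\phat$ at odd primes, combined with the Quillen--Lichtenbaum theorem, the cyclotomic trace on $K(\bZ)\phat$ is close to an equivalence in non-negative degrees, so only a controlled set of residual classes---concentrated in a small and mostly non-positive range of degrees---needs to be handled; these I would cover using the unit map $\bS \to TC(\bS)$ and what is already known about $\pi_{*}TC(\bS)\phat$ in low degrees, where $\pi_{*}K(\bZ)\phat$ vanishes and cannot contribute.

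For (B) my plan is to produce a spectrum-level right inverse to $\delta$, namely a map $TC(\bZ)\phat \to TC(\bS)\phat \vee K(\bZ)\phat$ inducing a section of $\delta$ on $\pi_{*}$. A natural candidate is a partial section of the cyclotomic trace constructed from the $K(1)$-local picture $\tLK$, corrected in the finite range where the trace fails to be a $p$-adic equivalence by a contribution from $TC(\bS)\phat$; the non-canonicity advertised in the statement reflects the choices required to glue these partial sections.

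The principal obstacle will be step (B) at irregular primes. At regular primes, Rognes used the geometric Soul\'e embedding of B\"okstedt--Hsiang--Madsen to split $TC(\bS)\phat$ off of $K(\bS)\phat$, which immediately produces a splitting of a closely related sequence; this mechanism fails outside the regular case, as is emphasized in the introduction. A genuinely new construction is therefore required, one that exploits structural features of the cyclotomic trace and of $\tLK$ that remain available without the regularity hypothesis. I expect that the truly delicate issue is arranging the correction term so that it interacts compatibly with the (possibly complicated) $p$-torsion in $\pi_{*}TC(\bZ)\phat$ at an irregular prime.
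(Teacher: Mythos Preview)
Your reduction to showing that $\delta$ is split surjective is exactly what the paper does (Theorem~\ref{thm:surj}). The gap is in your plan for proving it.

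Your step (A) rests on the claim that the cyclotomic trace $K(\bZ)\phat\to TC(\bZ)\phat$ is ``close to an equivalence in non-negative degrees'' with residual classes ``concentrated in a small and mostly non-positive range.'' This is not true. In the splittings of Section~\ref{sec:review}, $K(\bZ)\phat\simeq j\vee y_{0}\vee\cdots\vee y_{p-2}$ while $TC(\bZ)\phat\simeq j\vee\Sigma j'\vee\Sigma^{-1}\ell_{TC}(0)\vee\cdots\vee\Sigma^{-1}\ell_{TC}(p-2)$, and by Theorem~\ref{thm:tracediag} the trace sends $y_{i}$ into $\Sigma^{-1}\ell_{TC}(i+1)$. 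The summand $\Sigma j'$ is missed entirely, and for most $i$ the map $y_{i}\to\Sigma^{-1}\ell_{TC}(i+1)$ is far from surjective on homotopy (at a regular prime the odd $y_{i}$ are trivial while every $\ell_{TC}(q)$ is a shifted copy of $\ell$). The cokernel is spread across all positive degrees, not confined to a finite range, so it cannot be handled by low-degree information about $TC(\bS)\phat$. You may be conflating this with the Hesselholt--Madsen result that $K(\bZp)\phat\to TC(\bZp)\phat$ is a weak equivalence on connective covers; that concerns $\bZp$, not $\bZ$.

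The paper's actual argument is structurally different: it decomposes $\pi_{*}TC(\bZ)\phat$ into its summands and covers each one by a specific piece of the domain. The $j$ and $\Sigma j'$ summands are hit by the $\bS\phat$ and $\Sigma\bS\phat$ pieces of $TC(\bS)\phat$ (Theorem~\ref{thm:lindiag}). The $\ell_{TC}(q)$ summands for $q\not\equiv 1\pmod{p-1}$ are hit split-surjectively by $\Sigma\bCPimbar\subset TC(\bS)\phat$ via the Klein--Rognes computation (Lemma~\ref{lem:klsurj}); this is the mechanism that does the bulk of the work and is absent from your proposal. The remaining summand $\Sigma^{-1}\ell_{TC}(p)$, in degrees $\equiv 1\pmod{2(p-1)}$, is exactly where Klein--Rognes fails, and here the paper uses $K(\bZ)\phat$: Lemma~\ref{lem:y0} shows that $y_{0}\to\Sigma^{-1}\ell_{TC}(p)$ is a weak equivalence. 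This last step is the replacement for the geometric Soul\'e embedding and works uniformly for all odd primes; it is the missing idea you anticipated needing in (B), but it arises already at the level of surjectivity, not as a separate splitting construction.
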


Choosing appropriate splittings in the previous theorem,
we can identify the $p$-torsion groups.  The identification is 
again in terms of $\pi_{*}\bS$ and $\pi_{*}\fu$ (or non-canonically,
$\pi_* \Sigma \bCPim$) but now involves also $\pi_{*}K(\bZ)$, which is not
fully understood at irregular primes.  In
the statement, $\tK(\bZ)$ denotes the wedge summand of
$K(\bZ)\phat$ complementary to $j$~\cite[2.1,9.7]{DwyerMitchell},
$K(\bZ)\phat\simeq j \vee \tK(\bZ)$, where $j$ is the $p$-complete
additive image of $J$ spectrum, the connective cover of $L_{K(1)}\bS$.
(As discussed below, this splitting is canonical.)

\begin{thm}\label{thm:main}
Let $p$ be an odd prime.  The $p$-torsion in $\pi_{*}K(\bS)$ admits
canonical isomorphisms
\begin{align}
\label{a}\tag{a}\ptor(\pi_{*}K(\bS))&
\iso \ptor(\pi_{*}c \oplus \pi_{*-1}c
\oplus \pi_{*}\fubar \oplus \pi_{*}K(\bZ))\\
\label{b}\tag{b}&\iso \ptor(\pi_{*}\bS \oplus \pi_{*-1}c
\oplus \pi_{*}\fubar \oplus \pi_{*}\tK(\bZ)).
\end{align}
\end{thm}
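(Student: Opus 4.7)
The plan is to combine the split short exact sequence from Theorem~\ref{thm:mainles} with explicit computations of $\pi_*TC(\bS)\phat$ and $\pi_*TC(\bZ)\phat$, together with the splittings $\pi_*\bS\phat\iso\pi_*c\oplus\pi_*j$ (the Adams $e$-invariant), $\bCPim\phat\simeq\bS\phat\vee\bCPimbar$ (Rognes), and $K(\bZ)\phat\simeq j\vee\tK(\bZ)$ (Dwyer-Mitchell, invoked in the excerpt). Since the sequence of Theorem~\ref{thm:mainles} splits (non-canonically), taking $p$-torsion yields the abstract isomorphism
\[
\ptor\pi_{*}K(\bS)\oplus \ptor\pi_{*}TC(\bZ)\iso \ptor\pi_{*}TC(\bS)\oplus \ptor\pi_{*}K(\bZ),
\]
reducing the problem to identifying $\ptor\pi_*TC(\bS)$ and $\ptor\pi_*TC(\bZ)$ and performing suitable cancellations.

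For $\pi_*TC(\bS)\phat$ I would invoke the B\"okstedt-Hsiang-Madsen splitting $TC(\bS)\phat \simeq \bS\phat\vee\Sigma\bCPim\phat$; unfolding the wedge summands using Rognes' splitting and the $e$-invariant yields
\[
\ptor\pi_*TC(\bS)\iso \ptor(\pi_*c\oplus\pi_*j\oplus\pi_{*-1}c\oplus\pi_{*-1}j\oplus\pi_{*-1}\bCPimbar).
\]
For $\pi_*TC(\bZ)\phat$ I would use the explicit calculation of $TC(\bZ;p)\phat$ due to B\"okstedt-Madsen (extended by Rognes and Hesselholt-Madsen) to identify the relevant $p$-torsion contribution through natural maps, obtaining $\ptor\pi_*TC(\bZ)\iso\ptor(\pi_*j\oplus\pi_{*-1}j)$. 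Substituting these identifications into the displayed equation and cancelling $\pi_*j\oplus\pi_{*-1}j$ from each side gives part~(a). Part~(b) then follows from (a) by substituting $\pi_*K(\bZ)\phat=\pi_*j\oplus\pi_*\tK(\bZ)$ and $\pi_*c\oplus\pi_*j=\pi_*\bS\phat$.

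The hardest step will be establishing \emph{canonicality}: Theorem~\ref{thm:mainles} provides only a non-canonical splitting, yet Theorem~\ref{thm:main} asserts canonical isomorphisms. This forces us to realize the identifications via natural maps of spectra rather than arbitrary section choices. The relevant natural ingredients should include the unit $\bS\to K(\bS)$, the cyclotomic trace $K\to TC$, the $e$-invariant $\bS\to j$, the Dwyer-Mitchell projection $K(\bZ)\phat\to\tK(\bZ)$, and the natural map $\Sigma\bCPim\phat\to TC(\bS)\phat$ arising from the BHM construction. Arranging these so that they produce canonical splittings of the underlying short exact sequences of $p$-torsion groups, and so that the algebraic cancellations above are realized by natural isomorphisms, is the subtle part of the argument; this is exactly where Rognes' proof required the Soul\'e embedding (and hence regularity), and any new approach must circumvent this at irregular primes.
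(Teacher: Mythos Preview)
Your outline correctly reproduces the non-canonical isomorphism, and you are right that canonicality is the crux. However, your proposed route to canonicality---realizing the algebraic \emph{cancellations} by natural maps---is not what the paper does and would not obviously succeed: cancelling a common summand from both sides of an abstract isomorphism is exactly the kind of step that destroys canonicality, even if each summand is defined naturally.

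The paper's approach avoids cancellation entirely. It identifies $\ptor(\pi_n K(\bS))$ directly as the \emph{kernel} of the map
\[
\ptor\bigl(\pi_n\bS\oplus\pi_{n-1}\bS\oplus\pi_{n-1}\bCPimbar\oplus\pi_n j\oplus\pi_n\tK(\bZ)\bigr)\longrightarrow \ptor\bigl(\pi_n j\oplus\pi_{n-1}j\bigr)
\]
(having discarded the torsion-free $\ell$ summands of $TC(\bZ)\phat$), and then proves that this map is \emph{diagonal}: it is the sum of the canonical maps $\pi_n\bS\oplus\pi_n j\to\pi_n j$ and $\pi_{n-1}\bS\to\pi_{n-1}j$, together with the zero maps on $\pi_{n-1}\bCPimbar$ and $\pi_n\tK(\bZ)$. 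Once the map is known to have this form, its kernel is computed canonically, and the two descriptions~(a) and~(b) arise from the two evident ways of splitting the surjection $\pi_n\bS\oplus\pi_n j\to\pi_n j$.

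The substantive input you are missing, then, is the diagonality statements (Theorems~\ref{thm:tracediag} and~\ref{thm:lindiag} in the paper). The most delicate of these is that the composite $\Sigma\bCPimbar\to TC(\bS)\phat\to TC(\bZ)\phat$ vanishes on $p$-torsion; the argument uses Ravenel's identification of $L_{K(1)}\Sigma^{\infty}\bCPi$ as a wedge of copies of $KU\phat$ to show that any torsion class in $\pi_*\Sigma\bCPimbar$ maps to zero in $\pi_*j$. This is what replaces the Soul\'e embedding and removes the regularity hypothesis.
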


In Formula~\eqref{a}, the map $\ptor(\pi_{*}K(\bS))\to
\ptor(\pi_{*}(K(\bZ)))$ is induced by the linearization map and the map 
\[
\ptor(\pi_{*}K(\bS))\to \ptor(\pi_{*}c \oplus \pi_{*-1}c
\oplus \pi_{*}\fubar)
\]
is induced by the composite of the cyclotomic trace map $K(\bS)\to
TC(\bS)$ and a canonical splitting~\eqref{eq:TCSsplit} of the homotopy groups
$\pi_{*}TC(\bS)\phat$ as
\[
\pi_{*}TC(\bS)\phat \iso
\pi_{*}(j)\oplus \pi_{*}(c)\oplus \pi_{*}(\Sigma j)\oplus
\pi_{*}(\Sigma c) \oplus \pi_{*}(\fubar)
\]
explained in the first part of Section~\ref{sec:review}, followed by
the projection onto the non-$j$ summands.

In formula~\eqref{b}, the map $\ptor(\pi_{*}K(\bS))\to
\ptor(\pi_{*}\tK(\bZ))$ is induced by the linearization map and the
canonical map $\pi_{*}K(\bZ)\to \pi_{*}\tK(\bZ)$ which is the quotient
by the image of the unit map $\pi_{*}\bS\to \pi_{*}K(\bZ)$. The map 
\[
\ptor(\pi_{*}K(\bS))\to \ptor(\pi_{*}\bS \oplus \pi_{*-1}c
\oplus \pi_{*}\fubar)
\]
is induced by the composite of the cyclotomic trace map $K(\bS)\to
TC(\bS)$ and the canonical splitting of homotopy groups
\[
\pi_{*}TC(\bS)\phat \iso
\pi_{*}(\bS\phat)\oplus \pi_{*}(\Sigma j)\oplus
\pi_{*}(\Sigma c) \oplus \pi_{*}(\fubar)
\]
followed by projection away from the $\Sigma j$ summand.  (The splitting of
$\pi_{*}TC(\bS)\phat$ here is related to the splitting above by the
canonical splitting on homotopy groups $\pi_{*}\bS\phat\iso
\pi_{*}(j)\oplus \pi_*(c)$.)

Formula~\eqref{b} generalizes the computation of Rognes~\cite{Rognesp}
at odd regular primes because $\tK(\bZ)$ is torsion free if (and only
if) $p$ is regular (see for example \cite[\S VI.10]{Weibel-KBook}).
Part of the argument for the theorems above involves making certain
splittings in prior $K$-theory and $TC$ computations canonical and
canonically identifying certain maps (or at least their effect on
homotopy groups). Although we construct the splittings and prove their
essential uniqueness calculationally, we offer in
Section~\ref{sec:splitting} a theoretical explanation in terms of a
conjectural extension of Adams operations on algebraic $K$-theory 
to an action of the $p$-adic
units and a conjecture on the consistency of Adams operations on
$K$-theory and Adams operations on $TC$.  This perspective leads to a
splitting of $K(\bS)\phat$ and the linearization/cyclotomic trace
square into $p-1$ summands (which is independent of the conjectures),
expanding on certain splittings discovered by Rognes~\cite[\S3]{Rognesp}
(in the case of regular primes); see Theorem~\ref{thm:eigensquare}.

The identification of the maps in Section~\ref{sec:maps} allows us to
prove the following theorem, which slightly sharpens Theorem~\ref{thm:main}.

\begin{thm}\label{thm:subgp}
Let $p$ be an odd prime. Let $\alpha \colon \pi_{*}TC(\bS)\to
\pi_{*}j$ be the induced map on homotopy groups given by the composite
of the canonical maps $TC(\bS)\to THH(\bS)\simeq \bS$ and $\bS\to j$;
let $\beta \colon \pi_{*}K(\bZ)\to \pi_{*}j$ be the canonical
splitting; and let $\gamma\colon
\pi_{*}TC(\bS)\to \pi_{*}\Sigma \bS^{1}\to \pi_{*}\Sigma j$ be the map
induced by the splitting of~\eqref{eq:TCSsplit} below.  Then
the $p$-torsion subgroup of $\pi_{*}K(\bS)$ maps isomorphically to the
subgroup of the $p$-torsion subgroup of $\pi_{*}TC(\bS)\oplus
\pi_{*}K(\bZ)$ where the appropriate projections composed with
$\alpha$ and $\beta$ agree and the appropriate projection composed
with $\gamma$ is zero.
\end{thm}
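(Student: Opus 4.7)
The plan is to derive this from Theorem \ref{thm:mainles} by identifying the comparison map at the end of the short exact sequence explicitly in terms of $\alpha$, $\beta$, and $\gamma$. By Theorem \ref{thm:mainles}, the map
\[
\pi_{*}K(\bS)\phat \longrightarrow \pi_{*}TC(\bS)\phat\oplus \pi_{*}K(\bZ)\phat
\]
induced by the cyclotomic trace and linearization is injective, and its image is the kernel of the signed difference map
\[
\delta \colon \pi_{*}TC(\bS)\phat\oplus \pi_{*}K(\bZ)\phat \longrightarrow \pi_{*}TC(\bZ)\phat
\]
given by linearization on $TC$ minus the cyclotomic trace for $\bZ$. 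Passing to $p$-torsion, I would identify $\ptor(\pi_{*}K(\bS))$ with the $p$-torsion in $\ker \delta$, so the problem reduces to checking that, on $p$-torsion, $\ker \delta$ equals the subgroup cut out by the conditions $\alpha(x) = \beta(y)$ and $\gamma(x) = 0$.

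First, I would use the splittings of $\pi_{*}TC(\bS)\phat$ and $\pi_{*}K(\bZ)\phat$ already discussed in the paper, together with a corresponding splitting of $\pi_{*}TC(\bZ)\phat$. The Dwyer--Mitchell decomposition $K(\bZ)\phat\simeq j\vee \tK(\bZ)$ and the analogous B\"okstedt--Madsen-type analysis of $TC(\bZ)$ at odd primes provide a compatible splitting of $\pi_{*}TC(\bZ)\phat$ in which $\pi_{*}j$ and $\pi_{*}\Sigma j$ appear as summands. Relative to these splittings, $\delta$ acquires a matrix form whose entries can be analyzed summand by summand.

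Next, I would invoke the explicit identifications of maps carried out in Section \ref{sec:maps}. The key points to extract are that the $j$-component of the linearization $TC(\bS)\to TC(\bZ)$ agrees with $\alpha$ (which factors through $TC(\bS)\to THH(\bS)\simeq \bS\to j$ by naturality), the $j$-component of the cyclotomic trace $K(\bZ)\to TC(\bZ)$ agrees with $\beta$ (the Harris--Segal splitting), and the $\Sigma j$-component of the linearization picks up exactly $\gamma$ via the canonical splitting of \eqref{eq:CPimbar}; meanwhile the $\Sigma j$-component on the $K(\bZ)$ side vanishes because $K(\bZ)\phat$ has no $\Sigma j$-summand in its natural splitting. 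Granting these identifications, the kernel conditions on the $\pi_{*}j$- and $\pi_{*}\Sigma j$-summands of $\pi_{*}TC(\bZ)\phat$ translate precisely to $\alpha(x)=\beta(y)$ and $\gamma(x)=0$.

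The main obstacle is showing that the remaining components of $\pi_{*}TC(\bZ)\phat$ contribute no further constraints on $p$-torsion. Equivalently, I must verify that on pairs $(x,y)$ satisfying $\alpha(x)=\beta(y)$ and $\gamma(x)=0$ the remaining components of $\delta$ vanish automatically. This should reduce to tracing the images of the $\pi_{*}c$, $\pi_{*}\Sigma c$, $\pi_{*}\bCPimbar$, and $\pi_{*}\tK(\bZ)$ summands of $\pi_{*}TC(\bS)\phat\oplus \pi_{*}K(\bZ)\phat$ through $\delta$ and checking that the corresponding entries either vanish or are matched up compatibly on both sides. These compatibilities are exactly the sort of map identifications that Section \ref{sec:maps} is set up to provide, so the proof should unwind into a summand-by-summand verification using the results there.
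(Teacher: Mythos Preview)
Your approach is essentially the paper's: start from the split short exact sequence of Theorem~\ref{thm:mainles}, pass to $p$-torsion, and identify the difference map $\delta$ summand by summand using the splittings of Section~\ref{sec:review} and the map identifications of Section~\ref{sec:maps}.

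One clarification on your ``main obstacle.'' You frame the remaining step as verifying that the non-$j$, non-$\Sigma j$ components of $\delta$ vanish automatically on pairs satisfying $\alpha(x)=\beta(y)$ and $\gamma(x)=0$. That is not quite the right formulation, and stated that way it is not true: the $\Sigma\bCPimbar$ summand, for instance, maps nontrivially to the $\Sigma^{-1}\ell_{TC}(i)$ summands (this is exactly Lemma~\ref{lem:klsurj}). The correct and much simpler observation is that the $\Sigma^{-1}\ell_{TC}(i)$ summands of $TC(\bZ)\phat$ are torsion-free (each is non-canonically weakly equivalent to a suspension of $\ell$), so $\ptor(\pi_{*}TC(\bZ)\phat)=\ptor(\pi_{*}j\oplus\pi_{*}\Sigma j')$. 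Thus on $p$-torsion the map $\delta$ lands entirely in $\ptor(\pi_{*}j)\oplus\ptor(\pi_{*}\Sigma j')$, and there are no further constraints to check. With this in hand, Theorems~\ref{thm:tracediag} and~\ref{thm:lindiag} (especially part~(iii), which says the $\bCPimbar$ contribution to the $j$-component vanishes on torsion) finish the identification exactly as you outlined.
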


Theorems~\ref{thm:main} and~\ref{thm:subgp} provide a good
understanding of what the linearization/cyclotomic trace square does
on the odd torsion part of the homotopy groups.  In contrast, the maps
on mod torsion homotopy groups
are not fully understood.  We have that $\pi_{n}K(\bS)$ and
$\pi_{n}K(\bZ)$ mod torsion are rank one for $n=0$ and $n\equiv 1
\mod{4}$, $n>1$; the map $K(\bS)\to K(\bZ)$ is a rational equivalence
and an isomorphism in degree zero, but is not an isomorphism on mod
torsion homotopy groups in degrees congruent to $1$ mod $2(p-1)$ by
the work of Klein-Rognes (see the proof of
\cite[6.3.(i)]{KleinRognes}).  The mod torsion homotopy groups of
$TC(\bS)\phat$ are rank one in degree zero and odd degrees $\geq -1$;
the map $K(\bS)\phat\to TC(\bS)\phat$ on mod torsion homotopy groups
is an isomorphism in degree zero, by necessity zero in degrees not
congruent to $1$ mod $4$, and for odd regular primes an isomorphism in
degrees congruent to $1$ mod $4$.  For irregular primes, the map is
not fully understood.

In principle, we can use Theorem~\ref{thm:main} to calculate
$\pi_{*}K(\bS)$ in low degrees.  In practice, we are limited by a lack
of understanding of $\pi_{*} \bCPim$ and $\pi_* K(\bZ)$; we know
$\pi_{*}\bS$ and $\pi_{*}c$ in a comparatively larger range.  The
calculations of $\pi_* \bCPimbar$ for $*< |\beta_{2}|-1=(2p+1)(2p-2) - 3$
in \cite[4.7]{Rognesp} work for irregular primes as well as odd
regular primes; although they only give the order of the torsion rather
than the torsion group, in many cases the order is either $1$ or $p$,
and the group structure is also determined.  At primes that satisfy
the Kummer-Vandiver
conjecture, we know $\pi_{*}K(\bZ)$ in terms of Bernoulli numbers; at
other primes we know $\pi_{*}K(\bZ)$ in odd degrees, but do not
know $K_{4n}\bZ$ at all (except $n=0,1$) and only know the order of
$\pi_{4n+2}K(\bZ)$ and again only in terms of Bernoulli numbers.  As
the formula~\cite[4.7]{Rognesp} for $\pi_{*}\bCPimbar$ is somewhat messy,
we do not summarize the answer here, but for convenience, we have
included it in Section~\ref{sec:mess}.

As a consequence of the
work of Rognes~\cite[3.6,3.8]{Rognesp}, the cyclotomic trace 
\[
\trc{p}\colon K(\bS)\phat\to TC(\bS)\phat
\]
is injective on homotopy groups at
odd regular primes.  In Theorem~\ref{thm:main} above, the contribution
in $\ptor(\pi_{*}K(\bS))$ of $p$-torsion from $\ptor(\pi_{*}\tK(\bZ))$
maps to zero in 
$\pi_{*}TC(\bS)$ under the cyclotomic trace.  This then gives the
following complete answer to the question the authors posed
in~\cite{BM-Anil}:

\begin{cor}\label{cor:srAnil}
For an odd prime $p$, the cyclotomic trace $\trc{p}\colon
K(\bS)\phat\to TC(\bS)\phat$ is injective on homotopy groups if and
only if $p$ is regular. 
\end{cor}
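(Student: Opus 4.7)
The plan is to read off the kernel of the cyclotomic trace on homotopy groups directly from Theorem~\ref{thm:main}(b) together with the description of the maps given immediately after that theorem, and then to match this with the standard characterization of regular primes in terms of the torsion of $\tK(\bZ)$.

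First I would use the canonical isomorphism of Theorem~\ref{thm:main}(b)
\[
\ptor(\pi_{*}K(\bS))\iso \ptor(\pi_{*}\bS \oplus \pi_{*-1}c \oplus \pi_{*-1}\bCPimbar \oplus \pi_{*}\tK(\bZ))
\]
and observe that the paper's description of the splitting says the projection onto the first three summands factors through the cyclotomic trace, while the projection onto $\ptor(\pi_{*}\tK(\bZ))$ is defined via the linearization map. Since the square in the excerpt is homotopy cartesian after $p$-completion (by Dundas's theorem), any class in $\ptor(\pi_{*}\tK(\bZ))\subset \ptor(\pi_{*}K(\bS))$ lies in the kernel of $\trc{p}$ (this is precisely the remark made in the paragraph preceding the corollary). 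Conversely, a torsion class in the kernel of $\trc{p}$ lies in the kernel of the three summand projections induced by $\trc{p}$, so under the isomorphism above it is concentrated in the $\ptor(\pi_{*}\tK(\bZ))$ summand. Hence the kernel of $\trc{p}$ on $p$-torsion is canonically isomorphic to $\ptor(\pi_{*}\tK(\bZ))$.

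For the only if direction, when $p$ is irregular the cited fact \cite[\S VI.10]{Weibel-KBook} gives $\ptor(\pi_{*}\tK(\bZ))\neq 0$, so $\trc{p}$ has a nontrivial kernel on $\pi_{*}(-)\phat$ and is not injective. For the if direction, assume $p$ is regular. Then $\tK(\bZ)$ is torsion free, so $\trc{p}$ is injective on the $p$-torsion of $\pi_{*}K(\bS)$ by the previous paragraph. Since $\pi_{*}K(\bS)$ is finitely generated, injectivity of the map $\pi_{*}K(\bS)\phat\to \pi_{*}TC(\bS)\phat$ reduces to injectivity on $p$-torsion together with injectivity on the free quotient. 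The free quotient is zero except in degree $0$ and degrees $\equiv 1\pmod 4$ with $n>1$; the map in degree $0$ is the identity on $\bZp$, and the paper records that for odd regular primes it is an isomorphism in the remaining degrees. This gives injectivity on $\pi_{*}(-)\phat$, and hence on homotopy groups after tensoring with $\bZ/p^{k}$ or otherwise, the full statement of injectivity on $\pi_{*}$.

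The main potential obstacle is the bookkeeping needed to turn the kernel computation on $p$-torsion into an injectivity statement on the full homotopy groups; this is handled cleanly because $\pi_{*}K(\bS)$ is finitely generated and the free ranks are controlled by Waldhausen's rational calculation together with the regular-prime statement about the mod torsion map recalled in the excerpt.
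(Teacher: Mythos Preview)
Your argument is correct and matches the paper's own justification: the paper cites prior work \cite{bm-Anil} (based on Rognes) for the regular-prime direction and invokes the remark that the $\tK(\bZ)$ torsion summand maps to zero under the cyclotomic trace for the irregular direction, which is exactly what you extract from Theorem~\ref{thm:main}(b). Your treatment of the regular case is slightly more explicit---splitting into torsion and free-quotient parts and invoking the mod-torsion isomorphism recorded in the paper---but this rests on the same Rognes input, so the approaches coincide. (One small expository note: the parenthetical appeal to Dundas's theorem is not what forces the $\tK(\bZ)$ summand into the kernel; that comes from Theorem~\ref{thm:tracediag}, which sends $y_{i}$ into the torsion-free $\ell_{TC}$ summands, as you implicitly use when you cite the paper's remark.)
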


Theorem~\ref{thm:mainles} contains the following more general injectivity result.

\begin{cor}
For an odd prime $p$, the map of ring spectra
\[
K(\bS)\phat \to TC(\bS)\phat \times K(\bZ)\phat
\]
is injective on homotopy groups.
\end{cor}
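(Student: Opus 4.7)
The plan is to deduce this corollary as essentially a reformulation of Theorem~\ref{thm:mainles}. The key observation is that for spectra $X$ and $Y$, the product $X\times Y$ satisfies $\pi_{n}(X\times Y)\iso \pi_{n}X\oplus \pi_{n}Y$, so the content of the statement is that the induced map
\[
\pi_{*}K(\bS)\phat\longrightarrow \pi_{*}TC(\bS)\phat\oplus \pi_{*}K(\bZ)\phat
\]
is injective.

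Next I would verify that this homomorphism is exactly the leftmost map of the short exact sequence of Theorem~\ref{thm:mainles}. By the universal property of the product, the map of ring spectra $K(\bS)\phat\to TC(\bS)\phat\times K(\bZ)\phat$ is determined by its two component maps, namely the ($p$-completed) cyclotomic trace $K(\bS)\phat\to TC(\bS)\phat$ and the ($p$-completed) linearization $K(\bS)\phat\to K(\bZ)\phat$. These are precisely the two maps out of $K(\bS)\phat$ that appear in the $p$-completed linearization/cyclotomic trace square, and the map $\pi_{*}K(\bS)\phat\to \pi_{*}TC(\bS)\phat\oplus \pi_{*}K(\bZ)\phat$ in the Mayer--Vietoris sequence associated with that homotopy cartesian square (by Dundas' theorem) is the one sending a class to the pair consisting of its cyclotomic trace and its linearization.

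With this identification, injectivity follows immediately from the exactness on the left in Theorem~\ref{thm:mainles}. No substantive obstacle is expected: the only thing to be careful about is to confirm that the two-sided structure of the map (as a ring-spectrum map) restricts on homotopy groups to the map already analyzed in Theorem~\ref{thm:mainles}, which is clear since the cyclotomic trace and the linearization are both maps of ring spectra. I would then briefly remark that unlike Corollary~\ref{cor:srAnil}, whose failure at irregular primes is witnessed by the $p$-torsion summand $\ptor(\pi_{*}\tK(\bZ))$ of $\ptor(\pi_{*}K(\bS))$ that maps to zero in $\pi_{*}TC(\bS)\phat$, incorporating the linearization map into the target detects exactly this previously invisible summand, which is the conceptual reason the stronger product map is injective on $\pi_*$ for every odd prime.
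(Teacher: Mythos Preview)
Your proposal is correct and matches the paper's approach: the paper presents this corollary without a separate proof, simply prefacing it with ``Theorem~\ref{thm:mainles} contains the following more general injectivity result,'' and your argument just spells out why the injectivity is immediate from the left exactness in that theorem.
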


\subsection*{Conventions}
Throughout this paper $p$ denotes an odd prime.  For a ring $R$, we
write $R^{\times}$ for its group of units.

\subsection*{Acknowledgments}
The authors especially thank Lars Hesselholt for many helpful
conversations and for comments on a previous draft that led to
substantial sharpening of the results.  We are grateful to an
anonymous referee who wrote an extremely careful and detailed report,
which led to many improvements and corrections.
The authors also thank Mark
Behrens, Bill Dwyer, Tom Goodwillie, John Greenlees, Mike Hopkins, Thomas Kragh, 
Tyler Lawson, and Birgit Richter for helpful conversations, John
Rognes for useful comments, and the IMA and MSRI for their hospitality
while some of this work was done.

The first author was supported in part by NSF grant
DMS-1151577; the second author was supported in part by NSF grants
DMS-1105255, DMS-1505579.

\section{The spectra in the linearization/cyclotomic trace square}\label{sec:review}

We begin by reviewing the descriptions of the spectra $TC(\bS)\phat$,
$K(\bZ)\phat$, and $TC(\bZ)\phat$ in the $p$-completed
linearization/cyclotomic trace square  
\[
\xymatrix{
K(\bS)\phat \ar[r] \ar[d] & TC(\bS)\phat \ar[d] \\
K(\bZ)\phat \ar[r] & TC(\bZ)\phat.
}
\]
These three spectra have been identified in more familiar terms up to
weak equivalence.  We discuss how canonical these weak equivalences
are.  Often this will
involve studying splittings of the form $X\simeq Y\vee Z$ (or with
additional summands).  We will say that the \term{splitting is canonical} 
when we have a canonical isomorphism in the stable category $X\simeq
X'\vee X''$ with $X'$ and $X''$ (possibly non-canonically) isomorphic
in the stable category to $Y$ and $Z$; we will say that \term{the
identification of the summand $Y$ is canonical} when further the isomorphism in
the stable category $X'\simeq Y$ is canonical.  

To illustrate the above terminology, and justify its utility, consider
the example when $X$ is non-canonically weakly equivalent to $Y\vee Z$
and $[Y,Z]=0=[Z,Y]$ (where $[-,-]$ denotes maps in the stable
category).  In the terminology above, this gives an example of a
canonical splitting $X\simeq Y\vee Z$ without canonical identification
of the summands.  As another example, if we have a canonical map $Y\to
X$ in the stable category and a canonical map $X\to Y$ in the stable
category giving a retraction, then we have a
canonical splitting with canonical identification of summands $X\simeq
Y \vee F$ where $F$ is the homotopy fiber of the retraction map $X\to
Y$.  In this second example if we also have a non-canonical weak
equivalence $Z\to F$, we then have a canonical splitting $X\simeq
Y\vee Z$ with a canonical identification of the summand $Y$ (but not
the summand $Z$).

\subsection*{The splitting of \texorpdfstring{$TC(\bS)\phat$}{TC(S)p}}

Historically, $TC(\bS)\phat$ was the first of the terms in the
linearization/cyclotomic trace square to be understood.
Work of B\"okstedt-Hsiang-Madsen~\cite[5.15,5.17]{BHM} identifies the
homotopy type of
$TC(\bS)\phat$ as 
\[
TC(\bS)\phat\simeq \bS\phat\vee \Sigma (\bCPim)\phat.
\]
The inclusion of the $\bS$ summand is the unit of the ring spectrum
structure and is split by the canonical map $TC(\bS)\phat\to
THH(\bS)\phat$ and canonical identification $\bS\phat\simeq THH(\bS)\phat$
(also induced by the ring spectrum structure).  We therefore get a
canonical isomorphism in the stable category between $TC(\bS)\phat$ and
$\bS\phat\vee \fu\phat$ where $\fu\phat$ is the homotopy cofiber of
the map $\bS\phat \to TC(\bS)\phat$; in the terminology at the beginning of the section,
this is a canonical splitting with canonical identification
\begin{equation}\label{eq:TCS}
TC(\bS)\phat\simeq \bS\phat \vee \fu\phat.
\end{equation}
We have
a canonical splitting with non-canonical identification
$TC(\bS)\phat\simeq \bS\phat \vee \Sigma (\bCPim)\phat$.

As already indicated, we make use of a further splitting
from~\cite[\S 3]{Rognesp} (see also the remarks
preceding~\cite[(1.3)]{MadsenSchlitkrull}),
\[
(\bCPim)\phat\simeq \bS\phat \vee \bCPimbar,
\]
induced by the splitting $\Sigma^{\infty}_{+}\bCPi\simeq
\Sigma^{\infty}\bCPi\vee \bS$.
The splitting exists after inverting $2$, but for notational
convenience, we use it only after $p$-completion.  It follows that
there exists an analogous splitting 
\begin{equation}\label{eq:fubar}
\fu\phat \simeq \Sigma \bS\phat \vee \fubar.
\end{equation}
There is a canonical map $\Sigma \bS\phat\to
TC(\bS)\phat$ in the stable category arising from
\cite[5.15--17]{BHM}, which in particular 
gives a canonical isomorphism 
\[
\pi_{1}(TC(\bS)\phat)\iso \lim \bZ/p^{n}.
\]
Ravenel's calculation~\cite[1.11.(iii)]{Ravenel-SegalConsequences} 
reveals that $[\bCPim,\bS\phat]\iso \bZ\phat$, so there is a unique
map $\fu\phat\to \Sigma \bS\phat$ in the stable category such that the
composite self-map of $\Sigma \bS\phat$ is the identity. 

Finally, we have a canonical isomorphism of homotopy groups
$\pi_{*}\bS\phat\iso \pi_{*}(j)\oplus \pi_{*}(c)$ from classical work in
homotopy theory on Whitehead's $J$-homomorphism and Bousfield's work
on localization of spectra~\cite[\S 4]{BousfieldLocalizationSpectra}.
As above, $j$ denotes the connective cover of the
$K(1)$-localization of the
sphere spectrum and $c$ denotes the 
homotopy fiber of the map $\bS\phat\to j$.  The map $\bS\phat \to j$
induces an isomorphism from the $p$-Sylow subgroup of the image-of-$J$
subgroup of $\pi_{q}\bS\phat$ to $\pi_{q}j$.

Putting this all together, we have a canonical isomorphism
\begin{equation}\label{eq:TCSsplit}
\pi_{*}TC(\bS)\phat \iso
\pi_{*}(j)\oplus \pi_{*}(c)\oplus \pi_{*}(\Sigma j)\oplus
\pi_{*}(\Sigma c) \oplus \pi_{*}(\fubar).
\end{equation}

\subsection*{The splitting of \texorpdfstring{$TC(\bZ)\phat$}{TC(Z)p}}

Next up historically is $TC(\bZ)$, which was first identified by
B\"okstedt-Madsen~\cite{BokstedtMadsen, BokstedtMadsen2} and 
Rognes~\cite{Rognes-Characterizing}.  They expressed the
answer on the infinite-loop space level and (equivalently) described
the connective cover spectrum $TC(\bZ)\phat[0,\infty)$ as having the
homotopy type of 
\[
j\vee \Sigma j \vee \Sigma^{3}ku\phat
\simeq j\vee \Sigma j\vee \Sigma^{3}\ell\vee \Sigma^{5}\ell \vee
\dotsb \vee \Sigma^{3+2(p-2)}\ell
\]
(non-canonical isomorphism in the stable category).
Here $ku$ denotes connective complex topological $K$-theory (the
connective cover of periodic complex topological $K$-theory $KU$), and
$\ell$ denotes the Adams summand of $ku\phat$ (the connective cover
of the Adams summand $L$ of $KU\phat$).  
A standard calculation (e.g.,
see~\cite[2.5.7]{Madsen-Traces}) shows that before taking the
connective cover $\pi_{-1}(TC(\bZ)\phat)$ is free of rank one over
$\bZp$, and as we explain in the subsection on the homotopy type of
$TC(\bZ)\phat$ below, the argument
for~\cite[3.3]{Rognesp} extends to show that
summand $\Sigma^{3+2(p-3)}\ell$ above becomes $\Sigma^{-1}\ell$ in
$TC(\bZ)\phat$.

The non-canonical splitting above rigidifies into a canonical
splitting and canonical identification
\begin{equation}\label{eq:TCZmain}
TC(\bZ)\phat\simeq j \vee \Sigma j' \vee \Sigma^{-1}\ell_{TC}(0) \vee
\Sigma^{-1}\ell_{TC}(p)\vee \Sigma^{-1}\ell_{TC}(2)\vee \cdots \vee \Sigma^{-1}\ell_{TC}(p-2).
\end{equation}
Here the numbering replaces $1$ with $p$ but otherwise numbers
sequentially\break $0$,\dots, $p-2$.  Each $\Sigma^{-1}\ell_{TC}(i)$ is a
spectrum that is non-canonically weakly equivalent to
$\Sigma^{2i-1}\ell\iso\Sigma^{2i}(\Sigma^{-1}\ell)$ but that admits a
canonical description by work of
Hesselholt-Madsen~\cite[Th.~D]{HM2} and
Dwyer-Mitchell~\cite[\S13]{DwyerMitchell} in terms of units of
cyclotomic extensions of $\bQp$; we omit a detailed description of the
identification as we do not use it.
The spectrum $j'$ is the connective cover of the $K(1)$-localization
of the $\bZpt$-Moore spectrum $M_{\bZpt}$, or equivalently, the
$p$-completion of $j\sma M_{\bZpt}$.  Since the $p$-completion of
$\bZpt$ is non-canonically isomorphic to $\bZp$, we have that $j'$ is
non-canonically weakly equivalent to $j$ but with $\pi_{0}j'$
canonically isomorphic to $(\bZpt)\phat$ (and isomorphisms in the
stable category from $j$ to $j'$ are in canonical bijection with
isomorphisms $\bZp\to (\bZpt)\phat$).  Much of the canonical splitting
in~\eqref{eq:TCZmain} follows by a calculation of maps in the stable
category.  Temporarily writing $x(0)=j\vee \Sigma^{-1}\ell$,
$x(1)=\Sigma j\vee \Sigma^{2p-1} \ell$, and $x(i)=\Sigma^{2i-1}\ell$
for $i=2,\ldots,p-2$, the results above give us a canonical splitting
without canonical identification (see the start of this section for an
explanation of this terminology)
\[
TC(\bZ)\phat \simeq x(0) \vee \cdots \vee x(p-2)
\]
because for $i\neq i'$, we have $[x(i),x(i')]=0$; we provide a
detailed computation as Proposition~\ref{prop:TCsummands} at the end
of the section. The canonical map $\bS\to j$ induces an isomorphism
$[j,TC(\bZ)\phat]\iso \pi_{0}(TC(\bZ)\phat)$ and so we have a canonical
map $\eta \colon j\to TC(\bZ)\phat$ coming from the unit of the ring spectrum
structure.  Likewise, the canonical map $M_{\bZpt}\to j'$ induces an
isomorphism  
\[
[\Sigma j',TC(\bZ)\phat]\iso [\Sigma M_{\bZpt},TC(\bZ)\phat]\iso \Hom((\bZpt)\phat,\pi_{1}TC(\bZ)\phat).
\]
The canonical isomorphism $\pi_{1}TC(\bZ)\phat\iso (\bZpt)\phat$
\cite[Th.~D]{HM2}, then gives a canonical map $u\colon \Sigma j'\to
TC(\bZ)\phat$.  The restriction along $\eta$ and $u$ induce bijections
\[
[TC(\bZ)\phat,j]\iso [j,j]\qquad \text{and}\qquad 
[TC(\bZ)\phat,\Sigma j']\iso [\Sigma j',\Sigma j']
\]
(q.v.~Proposition~\ref{prop:splitjp} below), giving retractions to
$\eta$ and $u$ that are unique in the stable category.  This gives a canonical
splitting and identification of the $j$ and $\Sigma j'$ summands
in~\eqref{eq:TCZmain}.

\subsection*{The splitting of \texorpdfstring{$K(\bZ)\phat$}{K(Z)p}}

The homotopy type of $K(\bZ)\phat$ is still not fully
understood at irregular primes even in light of the confirmation of
the Quillen-Lichtenbaum conjecture.  The Quillen-Lichtenbaum
conjecture implies that $K(\bZ)\phat$ can be understood in terms of
its $K(1)$-localization $L_{K(1)}K(\bZ)$, and work of
Dwyer-Friedlander~\cite{DwyerFriedlander-TopologicalModels} or
Dwyer-Mitchell~\cite[12.2]{DwyerMitchell} identifies the homotopy type
of $K(\bZ)\phat$ at regular odd primes as $j\vee \Sigma^{5}ko\phat$
(non-canonically).  At any
prime, Quillen's Brauer induction and reduction mod $r$ (for $r$
prime a generator of $(\bZ/p^{2})^{\times}$) induce a splitting
\begin{equation}\label{eq:KZ}
K(\bZ)\phat\simeq j\vee \tK(\bZ)
\end{equation}
for some $p$-complete spectrum we denote as $\tK(\bZ)$ (see for
example, \cite[2.1,5.4,9.7]{DwyerMitchell}).  We argue in
Proposition~\ref{prop:jtKZ} at the end of this section
(once we have reviewed more about $\tK(\bZ)$) that $[j,\tK(\bZ)]=0$
and $[\tK(\bZ),j]=0$, and it follows that the splitting
in~\eqref{eq:KZ} is canonical; the identification
of the summand $j$ is also canonical, as the map $j\to K(\bZ)$ described is then
the unique one taking the canonical generator of $\pi_{0}j$ to the
unit element of $\pi_{0}K(\bZ)$ in the ring spectrum structure.

The splitting $K(\bZ)\phat\simeq
j\vee \tK(\bZ)$ corresponds to a splitting 
\[
L_{K(1)}K(\bZ)\simeq J\vee \tLK(\bZ)
\]
where $J=L_{K(1)}\bS$ is the $K(1)$-localization of $\bS$.
We have that $\tK(\bZ)$ is 
$4$-connected \cite{LeeSzczarba76,LeeSzczarba78,Rognes-K4Z} and the Quillen-Lichtenbaum conjecture (as reformulated
by Waldhausen~\cite[\S4]{Waldhausen-Chromatic}) implies that
$K(\bZ)\phat\to L_{K(1)}K(\bZ)$ induces an isomorphism on homotopy groups in
degrees $2$ and above.  Thus, $\tK(\bZ)$ is the $4$-connected cover of
$\tLK(\bZ)$.  This makes it straightforward to convert
statements about the homotopy type of $L_{K(1)}\tK(\bZ)$ into statements
about the homotopy type of $\tK(\bZ)\phat$.

A lot about $L_{K(1)}K(\bZ)$ is known by work of
Dwyer-Mitchell~\cite{DwyerMitchell}, which studies $L_{K(1)}K(R_{F})$
where $R_{F}=\oO_{F}[1/p]$ is the ring of $p$-integers in a number
field; in the case $F=\bQ$, $R_{F}=\bZ[1/p]$.  By Quillen's
localization theorem (and the triviality of $L_{K(1)}K(\bF_{p})\simeq
L_{K(1)}H\bZp$),
\[
L_{K(1)}K(\bZ)\simeq L_{K(1)}K(\bZ[1/p])
\]
and \cite[1.7]{DwyerMitchell} in particular computes
$(KU\phat)^{*}(\tK(\bZ))$ together with the action of
$(KU\phat)^{0}(KU\phat)$ in number theoretic terms in terms of the
\textit{Iwasawa module} $M$.  
Let $\mu_{p^{n}}$ denote the $p^{n}$th roots of unity and let
$\mu_{p^{\infty}}=\bigcup \mu_{p^{n}}$.  The Iwasawa module $M$ is
then the Galois group of the maximal abelian
$p$-extension of $\bQ(\mu_{p^{\infty}})$ unramified except at $p$.  It
comes with an action of $\Gamma'=\Gal(\bQ(\mu_{p^{\infty}})/\bQ)$ and its
completed group ring, typically denoted as $\Lambda'$ in Iwasawa
theory.  The canonical isomorphism $\Gamma'\iso (\bZp)^{\times}$
(induced by the canonical isomorphisms $\Aut(\mu_{p^{n}})\iso
(\bZ/p^{n})^{\times}$) and the interpolation of Adams operations on $KU\phat$ to
$p$-adic units gives an isomorphism between $\Gamma'$ and this group of
$p$-adically interpolated Adams operations, and induces an isomorphism of
$\bZp$-algebras $\Lambda'\iso (KU\phat)^{0}(KU\phat)$.  
From here on, we identify these two algebras via this isomorphism, the
utility of which is explained in~\cite[\S4,\S6]{DwyerMitchell}.
The main result of
Dwyer-Mitchell~\cite[1.7]{DwyerMitchell} proves 
\begin{align*}
(KU\phat)^{0}(\tK(\bZ))&=0\\
(KU\phat)^{-1}(\tK(\bZ))&\iso M
\end{align*}
as $\Lambda'$-algebras.  As discussed in~\cite[\S8]{DwyerMitchell},
$M$ is finitely generated projective dimension one as a
$\Lambda'$-module.  Working in terms of $L$, the Adams summand of $KU\phat$,
it follows that $L^{*}(\tK(\bZ))$ is concentrated in
odd degrees where it is a finitely generated projective dimension one
$L^{0}L$-module in each degree.  In particular, $\tLK(\bZ)$ splits 
\[
\tLK(\bZ)\simeq Y_{0}\vee \dotsb \vee Y_{p-2}
\]
where $Y_{i}$ is the fiber of a map
from a finite wedge of copies of $\Sigma^{2i-1}L$ to a finite wedge of
copies of $\Sigma^{2i-1}L$, giving an $L^{0}L$-resolution of the module
$L^{2i-1}Y_{i}$.  We note that $[Y_{i},Y_{i'}]=0$ unless $i=i'$.  

Letting $y_{i}$ be the $4$-connected cover of $Y_{i}$, we show below
in Proposition~\ref{prop:KZsummands} that $[y_{i},y_{i'}]=0$ for
$i\neq i'$ and so obtain a canonical splitting and canonical
identification of summands
\begin{equation}\label{eq:KZmain}
K(\bZ)\phat\simeq j\vee y_{0}\vee \cdots \vee y_{p-2}.
\end{equation}

Dwyer-Mitchell~\cite[8.10]{DwyerMitchell} relates the $L^{0}L$-modules
$L^{2i-1}Y_{i}$ to class groups of cyclotomic fields.  Write $A_{m}$
for the $p$-Sylow subgroup of the ideal class group of the cyclotomic
field $\bQ(\mu_{p^{m+1}})$, and let $A_{\infty}$ denote the inverse
limit of $A_{m}$ over the norm maps, with its natural structure of a
$\Lambda'$-module.  Usual notation is to write $\Gamma$ for the
subgroup of $\Gamma'$ given by
$\Gal(\bQ(\mu_{p^{\infty}})/\bQ(\mu_{p}))$, or equivalently, the
subgroup of $(\bZp)^{\times}$ of $p$-adic units that are congruent to
$1$ mod $p$.  Writing $\Delta$ for $\Gal(\bQ(\mu_{p})/\bQ)$, or
equivalently, $(\bZ/p)^{\times}$, the Teichm\"uller character
$\omega\colon \Delta\to \bZpt$ then induces a splitting
$\Gamma'\iso \Gamma \times \Delta$, which in turn induces an
isomorphism $\Lambda'=\Lambda[\Delta]$, for a certain
subring $\Lambda$ of $\Lambda'$.  In these terms,
\cite[8.10]{DwyerMitchell} gives an exact sequence
\begin{multline}\label{eq:dmmain}
0\to \Ext^{1}_{\Lambda}(A_{\infty},\Lambda)\to (KU\phat)^{1}(\tK(\bZ))\to
\Hom_{\Lambda}(E'_{\infty}(\text{red}),\Lambda)\to\\
\Ext^{2}_{\Lambda}(A_{\infty},\Lambda)\to 0.
\end{multline}
(Note that in the case under consideration here, $A_{\infty}$ is also
isomorphic to the $\Lambda'$-modules denoted $L_{\infty}$
and $A'_{\infty}$ in \cite{DwyerMitchell}, q.v.~\S12.)  Here
$E'_{\infty}(\text{red})$ is a certain $\Lambda'$-module defined in
terms of cyclotomic units, the details of which will not come into
play here, except to note that by \cite[9.10]{DwyerMitchell},
$E'_{\infty}(\text{red})$ is non-canonically isomorphic to
$(KU\phat)^{0}(KO\phat)$ as a $\Lambda'$-module.

To decompose~\eqref{eq:dmmain} in terms of the $Y_{i}$, we employ
the ``Adams splitting'' or ``eigensplitting'' of $\Delta$-equivariant
$\bZp$-modules.  Any $\bZp[\Delta]$-module $X$ decomposes as a direct
sum of pieces corresponding to the
powers $\omega^{j}$ of the Teichm\"uller character $\omega\colon
\Delta \to \bZpt$. The $\omega^{j}$-character piece $\epsilon_{j}X$ is
the submodule where the element $\alpha$ of $\Delta$ acts by multiplication by
$\omega^{j}(\alpha)\in \bZp$ (for all $\alpha\in \Delta$).  (For
$\Lambda'=\Lambda[\Delta]$-modules, the $\omega^{j}$ character piece
is the summand where $\psi^{\omega(\alpha)}$ acts by
$\omega^{j}(\alpha)$ for all $\alpha \in \Delta$.)  This
relates to the Adams decomposition of $KU\phat$ as
\[
\epsilon_{j}((KU\phat)^{q}(Z))\iso L^{2j+q}(Z)
\]
for any spectrum $Z$, and 
\[
(\Sigma^{2j}L)^{0}(\Sigma^{2j}L)\overto{\iso}(KU\phat)^{0}(\Sigma^{2j}L)\to (KU\phat)^{0}(KU\phat)=\Lambda'
\]
is reasonably interpreted as the inclusion of $\epsilon_{j}\Lambda'$.
(The projection $(KU\phat)^{0}(KU\phat)\to
(\Sigma^{2j}L)^{0}(\Sigma^{2j}L)$ induces an isomorphism of rings from
$\Lambda$ to $(\Sigma^{2j}L)^{0}(\Sigma^{2j}L)$.)
In $\Delta$-equivariant terms, the exact sequence~\eqref{eq:dmmain} is better
written as
\begin{multline*}
0\to \Ext^{1}_{\Lambda}(A_{\infty},L^{0}L)\to (KU\phat)^{1}(\tK(\bZ))\to
\Hom_{\Lambda}(E'_{\infty}(\text{red}),L^{0}L)\to\\
\Ext^{2}_{\Lambda}(A_{\infty},L^{0}L)\to 0.
\end{multline*}
(q.v.~\cite[8.11]{DwyerMitchell}) with the canonical 
action of $\Delta$ on $\Hom$ and $\Ext^{i}$.  Taking the
$\epsilon_{j}$ piece of the
$\Delta$-eigensplitting of this sequence, we get exact sequences
\begin{multline*}
0\to \Ext^{1}_{\Lambda}(\epsilon_{-j}A_{\infty},L^{0}L)\to
L^{2j+1}Y_{j+1}\to
\Hom_{\Lambda}(\epsilon_{-j}E^{\prime}_{\infty}(\text{red}),L^{0}L)\to \\
\Ext^{2}_{\Lambda}(\epsilon_{-j}A_{\infty},L^{0}L)\to 0
\end{multline*}
since $L^{2j+1}Y_{k}=0$ for $k\neq j+1$ (where we understand
$Y_{p-1}=Y_{0}$).  Now
$\Hom_{\Lambda}(\epsilon_{-j}E^{\prime}_{\infty}(\text{red}),L^{0}L)$
is zero when $j$ is odd and a free $\Lambda$-module of rank 1 when $j$
is even.  Specifically, $Y_{i}$ is closely related to
$\epsilon_{j}A_{\infty}$ for $i+j\equiv 1\mod{(p-1)}$. 

For fixed $p$, several of the $\omega^{j}$-character pieces of
$A_{\infty}$ are always zero.  In
particular, $\epsilon_{0}A_{0}=0$ (because
it is canonically isomorphic to the $p$-Sylow subgroup of the projective class
group of $\bZ$) and \cite[10.7]{Washington-1997} then implies that
$\epsilon_{0}A_{\infty}=0$.  From the exact sequence above,
$L^{1}Y_{1}\iso 
L^{0}L$ (non-canonically)
and so $Y_{1}$ is non-canonically weakly equivalent to $\Sigma L$. It
follows that $y_{1}$ is non-canonically weakly equivalent to
$\Sigma^{1+2(p-1)}\ell$.  In terms of
$K(\bZ)\phat$, we obtain a further canonical splitting (without
canonical identification)
$\tK(\bZ)\simeq \Sigma^{2p-1}\ell \vee \tK\mathstrut^{\#}(\bZ)$
for some $p$-complete spectrum $\tK\mathstrut^{\#}(\bZ)$.   We use the
identification of $y_{1}$ as a key step in the proof of
Theorem~\ref{thm:mainles} in Section~\ref{sec:proof}.

Another useful vanishing result is $\epsilon_{1}A_{0}=0$
\cite[6.16]{Washington-1997}.  As a consequence, we see that
\begin{equation}\label{eq:kvanish}
Y_{0}\simeq *.
\end{equation}
This simplifies some formulas and arguments.

Although these are the only results we use, other vanishing results
for $\epsilon_{j}A$ give other vanishing results for the summands $Y_{i}$.
Herbrand's Theorem~\cite[6.17]{Washington-1997} and Ribet's
Converse~\cite{Ribet-ConverseHerbrant}, \cite[15.8]{Washington-1997}
state that for $3\leq j\leq p-2$ 
odd, $\epsilon_{j}A_{0}\neq 0$ if and only if $p|B_{p-j}$ where $B_{n}$
denotes the Bernoulli number, numbered by the convention
$\frac{t}{e^{t}-1}=\sum B_{n}\frac{t^{n}}{n!}$.  We see that for $p-3 \geq 
i\geq  2$ even, $Y_{i}\simeq *$ when $p$ does not divide $B_{i+1}$.  In
particular, $Y_{2}$, $Y_{4}$, $Y_{6}$, $Y_{8}$, and $Y_{10}$ are
trivial, $Y_{12}$ is trivial for $p\neq 691$, and for every even $i$,
$Y_{i}$ is only nontrivial for finitely many primes.

A prime $p$ is regular precisely when $p$ does not divide the class
number of $\bQ(\zeta_{p})$, or in other words, when $A_{0}=0$ and
therefore $A_{\infty}=0$.  Then for an odd regular prime, we have that
$Y_{2k}$ is trivial for all $k$ and 
$Y_{2k+1}$ is non-canonically weakly equivalent to $\Sigma^{4k+1}L$.  It follows that $\tLK(\bZ)$
is non-canonically weakly equivalent to $\Sigma KO\phat$ and
$\tK(\bZ)$ is non-canonically weakly equivalent to $\Sigma^{5} ko\phat$, since $\tK(\bZ)$ is the
$4$-connected cover of $\tLK(\bZ)$.  This leads precisely to the
description of $K(\bZ)\phat$ as non-canonically weakly equivalent to
$j\vee \Sigma^{5}ko\phat$, as indicated above. 

Now consider the case when $p$ satisfies the Kummer-Vandiver
condition.  This means that $p$
does not divide the class number of 
$\bQ(\zeta_{p}+\zeta_{p}^{-1})$ (the fixed field of $\bQ(\zeta_{p})$
under complex conjugation).  The $p$-Sylow subgroup 
is precisely the subgroup of $A_{0}$ fixed by complex conjugation,
which is the internal direct sum of $\epsilon_{j}A_{0}$ for $0\leq
j<p-1$ even.  It follows that $\epsilon_{j}A_{0}=0$ for $j$ even, and
so again $Y_{2k+1}$ is non-canonically weakly equivalent to
$\Sigma^{4k+1}L$; this splits a copy of $\Sigma KO\phat$ (with non-canonical
identification) off $\tLK$ as in~\cite[12.2]{DwyerMitchell}.  Now the even summands 
$Y_{2k}$ may be non-zero, but 
the $\Lambda$-modules $\epsilon_{j}A_{\infty}$ are cyclic for $j$
odd (see for example, \cite[10.16]{Washington-1997}) and
$Y_{2k}$ is (non-canonically) weakly equivalent to the homotopy
fiber of a map 
$\Sigma^{4k-1}L\to \Sigma^{4k-1}L$ determined by the $p$-adic $L$-function
$L_{p}(s;\omega^{2k})$ \cite[12.2]{DwyerMitchell}.  As above,
$Y_{0}\simeq *$ and in the other cases, for $n>0$, $n\equiv
2k-1\mod {(p-1)}$,
\begin{align*}
&\pi_{2n}Y_{2k}
  \iso\bZp/L_{p}(-n,\omega^{2k})
  =\bZp/\biggl(\frac{\scriptstyle B_{n+1}}{\scriptstyle n+1}\biggr)\\
&\pi_{2n+1}Y_{2k}=0
\end{align*}
(non-canonical isomorphisms).  The groups are of course zero for
$n\not\equiv 2k-1\mod {(p-1)}$.  (For $n<0$, $n\equiv 
2k-1\mod {(p-1)}$, the 
$L$-function formula for $\pi_{2n}Y_{2k}$ still holds, and
$\pi_{2n+1}Y_{2k}=0$ still holds provided the value of the
$L$-function is non-zero.  If the value of the $L$-function is zero,
then $\pi_{2n+1}Y_{2k}\iso \bZp$, though it is
conjectured~\cite{Schneider,Kolsteretal} that this case never occurs.)  

For $p$ not satisfying the Kummer-Vandiver condition, the odd
summands satisfy 
\begin{align*}
&\pi_{2n}Y_{2k+1}=\text{finite}\\
&\pi_{2n+1}Y_{2k+1}\iso \bZp
\end{align*}
(non-canonical isomorphism)
for\footnote{The published version has $2k+1$ here, which is incorrect} $n\equiv 2k\mod {(p-1)}$
(and zero otherwise) with the finite group unknown.
As always $Y_{0}\simeq *$, and the Mazur-Wiles
theorem~\cite{MazurWiles}, \cite[15.14]{Washington-1997} implies that
in the even summands, 
\begin{align*}
&\#(\pi_{2n}Y_{2k})=\#\left(\bZp/\biggl(\frac{\scriptstyle B_{n+1}}{\scriptstyle n+1}\biggr)\right)\\
&\pi_{2n+1}Y_{2k}=0
\end{align*}
for $n>0$, $n\equiv 2k-1\mod {(p-1)}$
(and $\pi_{2n}Y_{2k}=0$ for $n\not\equiv 2k-1\mod {(p-1)}$), although the precise
group in the first case is unknown.  (In this case, for $n<0$, $n\equiv
2k-1\mod {(p-1)}$, it is known that $\#(\pi_{2n}Y_{2k})=
\#(\bZp/L_{p}(-n,\omega^{2k}))$ and 
$\pi_{2n+1}Y_{2k}=0$, provided $L_{p}(-n,\omega^{2k})$ is non-zero. If
$L_{p}(-n,\omega^{2k})= 0$, then $\pi_{2n}Y_{2k}\iso
\bZp\oplus\text{finite}$ and $\pi_{2n+1}Y_{2k}\iso\bZp$,
non-canonically.) 
For more on the homotopy groups of $K(\bZ)$, see for example~\cite[\S
VI.10]{Weibel-KBook}. 

\subsection*{Supporting calculations}

In several places above, we claimed (implicitly or explicitly) that
certain hom sets in the stable category were zero.  Here we review
some calculations and justify these claims.  All of these computations
follow from well-known facts about the spectrum $L$ together with
standard facts about maps in the stable category. In particular, in
several places, we make use of the fact that for a $K(1)$-local
spectrum $Z$, the localization map $X \to L_{K(1)}X$ induces an
isomorphism $[L_{K(1)}X,Z] \to [X,Z]$; also, several times we make use
of the fact that if $X$ is $(n-1)$-connected, then the
$(n-1)$-connected cover map $Z[n,\infty)\to Z$ induces an isomorphism
$[X,Z[n,\infty)]\to [X,Z]$.  We begin with results on
$[\ell,\Sigma^{q}\ell]$.

\begin{prop}\label{prop:ellell}
The map $[\ell,\Sigma^{q}\ell]\to [\ell,\Sigma^{q}L]\iso
[L,\Sigma^{q}L]$ is an injection for $q\leq 2(2p-2)$.  In particular,
$[\ell,\Sigma^{q}\ell]=0$ if $q\not\equiv 0\mod{(2p-2)}$ and $q<2(2p-2)$.
\end{prop}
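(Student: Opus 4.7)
The plan is to analyze the homotopy fiber $F$ of the $K(1)$-localization map $\ell\to L$. The long exact sequence of $F\to\ell\to L$ shows that $\pi_{n}F$ is $\bZp$ exactly when $n=-(2k(p-1)+1)$ for some $k\geq 1$ and vanishes otherwise, so $F$ has top homotopy in degree $-(2p-1)$ and is $K(1)$-acyclic. This $K(1)$-acyclicity together with $\Sigma^{q}L$ being $K(1)$-local gives the stated isomorphism $[\ell,\Sigma^{q}L]\iso[L,\Sigma^{q}L]$.

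Applying $[\ell,-]$ to the cofiber sequence yields the exact sequence
\[
[\ell,\Sigma^{q-1}L]\overto{\partial}[\ell,\Sigma^{q}F]\to[\ell,\Sigma^{q}\ell]\to[\ell,\Sigma^{q}L],
\]
so injectivity is equivalent to surjectivity of $\partial$. I would compute $[\ell,\Sigma^{q}F]$ by splitting off the top Postnikov piece via the cofiber sequence $\Sigma^{q-(2p-1)}H\bZp\to\Sigma^{q}F\to W$, where $W$ has homotopy in degrees at most $q-(4p-3)$. For $q\leq 2(2p-2)=4p-4$, both $W$ and $\Sigma^{-1}W$ are concentrated in degrees $\leq -1$, hence are $(-1)$-coconnective; since $\ell$ is connective, the standard $t$-structure on spectra gives $[\ell,W]=[\ell,\Sigma^{-1}W]=0$, and therefore
\[
[\ell,\Sigma^{q}F]\iso H^{q-(2p-1)}(\ell;\bZp).
\]
The Postnikov truncation $\ell\to\ell_{\leq 2p-3}\simeq H\bZp$ induces an isomorphism on $H^{i}$ for $i\leq 2p-3$ (its fiber is $(2p-3)$-connected), reducing this to $H^{q-(2p-1)}(H\bZp;\bZp)$. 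Using the standard vanishing $H^{i}(H\bZp;\bZp)=0$ for $0<i<2p-2$ (the first nontrivial integral cohomology operation on $H\bZp$ at an odd prime lies in degree $2p-1$), we conclude $[\ell,\Sigma^{q}F]=0$ for $q\leq 4p-4$ with $q\neq 2p-1$, while $[\ell,\Sigma^{2p-1}F]\iso\bZp$.

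For the exceptional case $q=2p-1$, I would verify surjectivity of $\partial$ by exhibiting a preimage. The Bott equivalence $L\simeq\Sigma^{2p-2}L$ determines a class in $[L,\Sigma^{2p-2}L]\iso[\ell,\Sigma^{2p-2}L]$. The connecting homomorphism $\pi_{-(2p-2)}L\to\pi_{-(2p-1)}F$ of the fiber sequence is an isomorphism $\bZp\iso\bZp$ (from the long exact sequence, since $\pi_{-(2p-2)}\ell=\pi_{-(2p-3)}\ell=0$), so tracking this class through the Postnikov projection $[\ell,\Sigma^{2p-1}F]\iso[\ell,H\bZp]=\bZp$ identified above shows $\partial$ hits a generator. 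This completes the injectivity argument.

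The ``in particular'' statement follows from injectivity together with the fact that $[L,\Sigma^{q}L]$ vanishes when $q\not\equiv 0\pmod{2(p-1)}$; this is a consequence of Bott periodicity for the Adams summand, which concentrates both $\pi_{*}L$ and the operation ring $[L,\Sigma^{*}L]$ in degrees divisible by $2(p-1)$. The main obstacle in the argument is the integral cohomology vanishing $H^{i}(H\bZp;\bZp)=0$ for $0<i<2p-2$ that controls the injectivity range, together with careful bookkeeping of the boundary map at $q=2p-1$.
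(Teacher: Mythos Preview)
Your proof is correct and takes a genuinely different route from the paper's. The paper argues via the $v_{1}$-multiplication cofiber sequence
\[
\Sigma^{q-1-(2p-2)}H\bZp\to\Sigma^{q}\ell\to\Sigma^{q-(2p-2)}\ell\to\Sigma^{q-(2p-2)}H\bZp,
\]
showing that $[\ell,\Sigma^{q}\ell]\to[\ell,\Sigma^{q-(2p-2)}\ell]$ is injective in the stated range (using $H^{q-1-(2p-2)}(\ell;\bZp)=0$ for $0<q-1-(2p-2)<2p-2$, and surjectivity of $[\ell,\ell]\to H^{0}(\ell;\bZp)$ for the exceptional case $q=2p-1$), and then identifying $[\ell,\Sigma^{q-(2p-2)}\ell]$ with $[L,\Sigma^{q-(2p-2)}L]$ via connective covers. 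You instead attack the localization fiber sequence $F\to\ell\to L$ head-on, reducing injectivity to the surjectivity of $\partial\colon[\ell,\Sigma^{q-1}L]\to[\ell,\Sigma^{q}F]$ and computing the latter group by peeling off the top Postnikov piece of $F$. Both arguments ultimately rest on the same low-degree vanishing of $H^{*}(\ell;\bZp)$ (your detour through $H^{*}(H\bZp;\bZp)$ is correct but unnecessary, since the paper's statement $H^{i}(\ell;\bZp)=0$ for $0<i<2p-2$ is what you actually use), and both must treat $q=2p-1$ separately; your Bott-class argument and the paper's surjectivity-onto-$H^{0}$ argument are two faces of the same coin. Your approach has the conceptual advantage of making the comparison with $L$ the organizing principle from the outset, at the cost of needing the full homotopy of the acyclization fiber $F$; the paper's one-step $v_{1}$-reduction is marginally more elementary but slightly less transparent about why $L$ enters.
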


\begin{proof}
We have a cofiber sequence 
\[ 
\Sigma^{q-1-(2p-2)}H\bZp\to\Sigma^{q}\ell\to \Sigma^{q-(2p-2)}\ell\to
\Sigma^{q-(2p-2)}H\bZp, 
\] 
and a corresponding long exact sequence
\[
\cdots \to [\ell,\Sigma^{q-1-(2p-2)}H\bZp]
\to [\ell,\Sigma^{q}\ell]\to [\ell,\Sigma^{q-(2p-2)}\ell]\to \cdots.
\]
First we note that the map $[\ell,\Sigma^{q}\ell]\to
[\ell,\Sigma^{q-(2p-2)}\ell]$ is injective for $q\leq 2(2p-2)$:  When
$q\neq (2p-2)+1$ this follows from the fact that
\[
[\ell,\Sigma^{q-1-(2p-2)}H\bZp]=H^{q-1-(2p-2)}(\ell;\bZp)=0
\]
for $q-1-(2p-2)<(2p-2)$ unless $q-1-(2p-2)=0$.  In the case $q=(2p-2)+1$, the image of
$[\ell,\Sigma^{0}H\bZp]$ in $[\ell,\Sigma^{q}\ell]$ in the
long exact sequence is still zero because the map 
$[\ell,\ell]\to [\ell,H\bZp]\iso \bZp$ is surjective.
Now when $q-(2p-2)<2p-2$, 
\[
[\ell,\Sigma^{q-(2p-2)}\ell]\iso
[\ell,\Sigma^{q-(2p-2)}L]\iso [L,\Sigma^{q-(2p-2)}L]
\]
since then
$\Sigma^{q-(2p-2)}\ell\to \Sigma^{q-(2p-2)}L$ is a weak equivalence on 
connective covers.  For the remaining case $q=2(2p-2)$, we have seen
that the map $[\ell,\Sigma^{q}\ell]\to [\ell,\Sigma^{q-(2p-2)}\ell]$
is an injection and the map 
$[\ell,\Sigma^{q-(2p-2)}\ell]\to [\ell,\Sigma^{q-(2p-2)}L]$ is an
injection.
\end{proof}

Next, using the cofiber sequence 
\[
\Sigma^{-1}\ell\to \Sigma^{(2p-2)-1}\ell\to j\to \ell
\]
and applying the previous result, we obtain the following
calculation. 

\begin{prop}\label{prop:jell}
$[j,\Sigma^{q}\ell]=0$ if $q\not\equiv 0\mod{(2p-2)}$ and $q<2(2p-2)$.
\end{prop}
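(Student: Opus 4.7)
The plan is to apply the contravariant functor $[-,\Sigma^q\ell]$ to the displayed cofiber sequence and use Proposition~\ref{prop:ellell} to sandwich $[j,\Sigma^q\ell]$ between vanishing terms. Explicitly, the cofiber sequence yields a long exact sequence whose relevant segment is
\[
[\ell,\Sigma^q\ell]\longrightarrow [j,\Sigma^q\ell]\longrightarrow [\ell,\Sigma^{q-(2p-3)}\ell]\longrightarrow [\ell,\Sigma^{q+1}\ell].
\]
The hypothesis $q\not\equiv 0\pmod{2p-2}$ and $q<2(2p-2)$ immediately gives $[\ell,\Sigma^q\ell]=0$ by Proposition~\ref{prop:ellell}. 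Since $q-(2p-3)\equiv q+1\pmod{2p-2}$ and $q-(2p-3)<2p-1<2(2p-2)$, the right-hand flanking group also vanishes by Proposition~\ref{prop:ellell} provided $q\not\equiv -1\pmod{2p-2}$; combined with vanishing of the left term, this yields $[j,\Sigma^q\ell]=0$ in the generic case.

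For the edge cases $q\equiv -1\pmod{2p-2}$ within the range (namely $q=4p-5$, $q=2p-3$, $q=-1$, and further negative instances), the right-hand flanking group need not vanish, so I will instead show that the boundary map $[\ell,\Sigma^{q-(2p-3)}\ell]\to[\ell,\Sigma^{q+1}\ell]$ is injective, forcing the image of $[j,\Sigma^q\ell]$ to be zero. This boundary is post-composition with the connecting map $\phi\colon\ell\to\Sigma^{2p-2}\ell$ of the cofiber sequence (the map whose fiber is $j$). Composing with the injection $[\ell,\Sigma^{q+1}\ell]\hookrightarrow [L,\Sigma^{q+1}L]$ of Proposition~\ref{prop:ellell} (available since $q+1\leq 2(2p-2)$) reduces the question to injectivity in $L$-cohomology, where the $K(1)$-localization of $\phi$ identifies with $\psi^g-1\colon L\to L$ (its fiber being $J=L_{K(1)}\bS$); injectivity then follows because $\psi^g-1$ is a nonzero element of the Iwasawa algebra $L^0L$, which is a domain.

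The main obstacle is this edge-case analysis: identifying $\phi$ after $K(1)$-localization with $\psi^g-1$ and leveraging that $\psi^g-1$ is a non-zero-divisor in $L^0L$ to conclude injectivity of the relevant boundary map. Outside these cases, the result is a direct two-line consequence of applying Proposition~\ref{prop:ellell} to the flanking terms in the long exact sequence.
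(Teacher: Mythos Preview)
Your proof is correct and follows essentially the same route as the paper: apply $[-,\Sigma^q\ell]$ to the cofiber sequence defining $j$, kill the flanking terms via Proposition~\ref{prop:ellell} in the generic case, and for $q\equiv -1\pmod{2p-2}$ reduce to injectivity of the induced map on $[L,\Sigma^{*}L]$. The paper records this last step tersely by drawing the commutative square and declaring the bottom arrow an injection; you make the mechanism explicit by identifying the $K(1)$-localized connecting map with $\psi^g-1$ and invoking that $L^0L$ is a domain. One small terminological slip worth noting: the boundary map you describe is \emph{pre}-composition with $\phi$ (after the suspension shifts), not post-composition, since it arises from applying the contravariant functor $[-,\Sigma^q\ell]$ to the map $\Sigma^{-1}\ell\to\Sigma^{2p-3}\ell$; however, once you pass to $L^0L$ this distinction disappears because the ring is commutative, so your conclusion stands.
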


\begin{proof}
Looking at the long exact sequence 
\[
\cdots \to [\ell,\Sigma^{q}\ell]\to [j,\Sigma^{q}\ell] \to
[\Sigma^{(2p-2)-1}\ell,\Sigma^{q}\ell]\to [\Sigma^{-1}\ell,\Sigma^{q}\ell]\to\cdots 
\]
and using the isomorphism $[\Sigma^{(2p-2)-1}\ell,\Sigma^{q}\ell]\iso
[\ell,\Sigma^{q+1-(2p-2)}\ell]$ we have that both
$[\ell,\Sigma^{q}\ell]$ and $[\Sigma^{(2p-2)-1}\ell,\Sigma^{q}\ell]$
are 0 when $q\not\equiv 0,-1\mod{(2p-2)}$ and $q\leq 2(2p-2)$.  In the
case when $q\equiv -1\mod{(2p-2)}$, using also the isomorphism 
$[\Sigma^{-1}\ell,\Sigma^{q}\ell]\iso [\ell,\Sigma^{q+1}\ell]$, we
have a commutative diagram 
\[
\xymatrix{%
[\ell,\Sigma^{q+1-(2p-2)}\ell]\ar[r]\ar@{>->}[d]
&[\ell,\Sigma^{q+1}\ell]\ar@{>->}[d]\\
[L,\Sigma^{q+1-(2p-2)}L]\ar@{>->}[r]
&[L,\Sigma^{q+1}L]
}
\]
where the feathered arrows are known to be injections.  The statement
now follows in this case as well.
\end{proof}

For maps the other way, we have the following result.  The proof is
similar to the proof of the previous proposition.

\begin{prop}\label{prop:ellj}
$[\Sigma^{q}\ell,j]=0$ if $q\not\equiv -1\mod{(2p-2)}$ and $q\geq -(2p-2)$.
\end{prop}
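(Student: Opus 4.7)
The plan is to parallel the proof of Proposition~\ref{prop:jell}, applying $[\Sigma^{q}\ell,-]$ rather than $[-,\Sigma^{q}\ell]$ to the cofiber sequence
\[
\Sigma^{-1}\ell\to \Sigma^{(2p-2)-1}\ell\to j\to \ell.
\]
This produces a long exact sequence whose relevant portion, after rewriting each hom group via $[\Sigma^{q}\ell,\Sigma^{n}\ell]\iso [\ell,\Sigma^{n-q}\ell]$, reads
\[
\cdots\to [\ell,\Sigma^{(2p-2)-1-q}\ell]\to [\Sigma^{q}\ell,j]\to [\ell,\Sigma^{-q}\ell]\to [\ell,\Sigma^{2p-2-q}\ell]\to\cdots.
\]
By Proposition~\ref{prop:ellell}, the left neighbor vanishes for $q\not\equiv-1\pmod{2p-2}$ (the range hypothesis reducing to $q\geq -(2p-2)$), and the right neighbor additionally vanishes for $q\not\equiv 0\pmod{2p-2}$; so the conclusion $[\Sigma^{q}\ell,j]=0$ is immediate whenever $q\not\equiv-1,0\pmod{2p-2}$ and $q\geq-(2p-2)$.

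The remaining case is $q\equiv 0\pmod{2p-2}$ (with $q\not\equiv -1$ then automatic). Here only the left neighbor is known to vanish, so $[\Sigma^{q}\ell,j]$ injects into $[\ell,\Sigma^{-q}\ell]$, and the task reduces to showing that the next boundary map $[\ell,\Sigma^{-q}\ell]\to [\ell,\Sigma^{2p-2-q}\ell]$ is injective. Following the template of Proposition~\ref{prop:jell}, I would fit this boundary into the commutative square
\[
\xymatrix{
[\ell,\Sigma^{-q}\ell]\ar[r]\ar@{>->}[d] & [\ell,\Sigma^{2p-2-q}\ell]\ar@{>->}[d] \\
[L,\Sigma^{-q}L]\ar@{>->}[r] & [L,\Sigma^{2p-2-q}L]
}
\]
where the verticals are the injections supplied by Proposition~\ref{prop:ellell} (the hypotheses $-q\leq 2p-2$ and $2p-2-q\leq 2(2p-2)$ following from $q\geq -(2p-2)$). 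Once the bottom horizontal is an injection, so is the top.

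The main obstacle is establishing that the bottom horizontal is an injection, which is exactly the analogue of the correspondingly feathered arrow in the diagram in Proposition~\ref{prop:jell}. After the $(2p-2)$-periodicity identifications of $L$, this map becomes left multiplication in $[L,L]$ by a shift of the $K(1)$-localization of the cofiber map $\Sigma^{-1}\ell\to\Sigma^{(2p-2)-1}\ell$; up to a periodicity unit that self-map of $L$ is $\psi^{r}-1$, whose homotopy fiber is $J=L_{K(1)}\bS$. Hence the required injectivity is equivalent to the vanishing $[L,J]=0$, which is the same $K(1)$-local fact that underwrites the feathered bottom row in Proposition~\ref{prop:jell} and which I would invoke here as well.
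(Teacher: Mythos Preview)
Your proof is correct and follows precisely the route the paper intends: the paper's own proof consists of the single sentence ``The proof is similar to the proof of the previous proposition,'' and you have carried out that parallel argument in full. The only minor remark is that in your square the bottom horizontal is post-composition with the localized self-map of $L$ (whereas in Proposition~\ref{prop:jell} it was pre-composition), so the relevant vanishing is literally $[L,J]=0$ rather than $[\Sigma J,L]=0$; but as you note, both reduce to the same fact that $L^{0}L$ is an integral domain, so the distinction is immaterial.
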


We also have the following result for $q=-1$.

\begin{prop}\label{prop:elljmo}
$[\Sigma^{-1}\ell,j]=0$
\end{prop}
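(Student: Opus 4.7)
The plan is to reduce the computation of $[\Sigma^{-1}\ell, j]$ to a question about $J = L_{K(1)}\bS$ by exploiting that $j$ is the connective cover of $J$. Since $\pi_{-1}J \iso \bZp$ exhausts the homotopy of $J$ in negative degrees, there is a Postnikov cofiber sequence $j \to J \to \Sigma^{-1}H\bZp$. Applying $[\Sigma^{-1}\ell, -]$, together with the identification $[\Sigma^{-1}\ell, \Sigma^{-k}H\bZp] \iso H^{1-k}(\ell;\bZp)$ (which vanishes for $k=2$ by connectivity of $\ell$ and equals $\bZp$ for $k=1$ since $\pi_0\ell \iso \bZp$), will give the exact sequence
\[
0 \to [\Sigma^{-1}\ell, j] \to [\Sigma^{-1}\ell, J] \to \bZp,
\]
reducing the proposition to showing the right-hand map is injective.

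For this, I would use $K(1)$-locality of $J$ to identify $[\Sigma^{-1}\ell, J] \iso [\Sigma^{-1}L, J] \iso [L, \Sigma J]$, and then invoke the defining fiber sequence $J \to L \xrightarrow{\psi^g - 1} L$ (where $g$ is a topological generator of $\bZpt/\mu_{p-1}$) together with the vanishing $[L, \Sigma L] = L^{-1}L = 0$ (since $L$ has only even homotopy) to identify $[L, \Sigma J] \iso \operatorname{coker}((\psi^g - 1)_* \colon [L,L] \to [L,L])$, generated by the class of the connecting map $\partial \colon L \to \Sigma J$.

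The last step would be to check injectivity by tracing the generator through $\pi_{-1}$: the composite $\Sigma^{-1}\ell \to \Sigma^{-1}L \xrightarrow{\partial} J \to \Sigma^{-1}H\bZp$ is an isomorphism on $\pi_{-1}$, combining the iso $\pi_0 \ell \iso \pi_0 L$, the connecting iso $\pi_0 L \iso \pi_{-1}J$ (which exists since $\pi_0(\psi^g-1) = 0$ and $\pi_{-1}L = 0$), and the Postnikov iso $\pi_{-1}J \iso \pi_0 H\bZp$, so it represents the generator of $[\Sigma^{-1}\ell, \Sigma^{-1}H\bZp] \iso \bZp$. For any $\tilde f \in [L, \Sigma J]$ with vanishing image in $\bZp$, I would write $\tilde f = \partial \circ g$ with $g \in [L, L]$; the vanishing will force $\pi_0 g = 0$, and since the kernel of $\pi_0$-evaluation on $[L, L]$ coincides with the image of $(\psi^g - 1)_*$, this places $g \in (\psi^g - 1)[L, L]$ and forces $\tilde f = 0$. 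The main subtlety is this last structural identification on $[L, L]$, which invokes standard $K(1)$-local operation-ring machinery not fully developed in the paper's earlier sections.
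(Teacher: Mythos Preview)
Your approach is essentially the paper's, but there is a factual error in the first step: it is not true that $\pi_{-1}J$ exhausts the negative-degree homotopy of $J = L_{K(1)}\bS$. The very fiber sequence $J \to L \xrightarrow{\psi^{g}-1} L$ you invoke later shows that $\pi_{2(p-1)m-1}J$ is cyclic of order $p^{1+v_{p}(m)}$ for every nonzero integer $m$, so $J$ has nontrivial homotopy in infinitely many negative degrees and there is no cofiber sequence $j \to J \to \Sigma^{-1}H\bZp$. As written, your claimed exact sequence $0 \to [\Sigma^{-1}\ell,j] \to [\Sigma^{-1}\ell,J] \to \bZp$ therefore has no justification.

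The fix is exactly what the paper does: interpose the truncation $j_{-1} = J[-1,\infty)$. One has a genuine cofiber sequence $\Sigma^{-2}H\pi_{-1}J \to j \to j_{-1} \to \Sigma^{-1}H\pi_{-1}J$, and since $\Sigma^{-1}\ell$ is $(-2)$-connected the inclusion $j_{-1} \to J$ induces a bijection $[\Sigma^{-1}\ell, j_{-1}] \to [\Sigma^{-1}\ell, J]$. With that correction your exact sequence becomes valid and the remainder of your argument goes through. In fact your identification of $[L,\Sigma J]$ as the cokernel of $(\psi^{g}-1)_{*}$ on $[L,L] \iso \bZp[[T]]$ (with $T = \psi^{g}-1$), together with the check that the connecting map $\partial$ hits a generator under $\pi_{-1}$-evaluation, is a clean unpacking of what the paper compresses into the single assertion that ``a map $\Sigma^{-1}\ell \to j_{-1}$ is determined by the map on $\pi_{-1}$.''
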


\begin{proof}
Let $j_{-1}=J[-1,\infty)$ where $J=L_{K(1)}\bS\simeq L_{K(1)}j$.  Then
we have a cofiber sequence $\Sigma^{-2}H\pi_{-1}J\to j\to j_{-1}\to \Sigma^{-1}H\pi_{-1}J$ and a long exact
sequence 
\[
\cdots \to [\Sigma^{-1}\ell,\Sigma^{-2}H\pi_{-1}J]\to [\Sigma^{-1}\ell,j]\to 
[\Sigma^{-1}\ell,j_{-1}]\to [\Sigma^{-1}\ell,\Sigma^{-1}H\pi_{-1}J]\to \cdots .
\]
Since $\Sigma^{-1}\ell$ is $(-2)$-connected,
the inclusion of $j_{-1}$ in
$J$ induces a bijection 
\[
[\Sigma^{-1}\ell,j_{-1}]\to [\Sigma^{-1}\ell,J]\iso [\Sigma^{-1}L,J].
\]
It follows that a map $\Sigma^{-1}\ell\to j_{-1}$ is determined by the
map on $\pi_{-1}$, and therefore that the image of 
$[\Sigma^{-1}\ell,j]$ in $[\Sigma^{-1}\ell,j_{-1}]$ is zero.  But 
$H^{-2}(\Sigma^{-1}\ell;\pi_{-1}J)=0$, so  $[\Sigma^{-1}\ell,j]=0$.
\end{proof}

In the case of maps between suspensions of $j$, we only need to
consider two cases:

\begin{prop}\label{prop:jj}
$[j,\Sigma j]=0$ and $[\Sigma j,j]=0$.
\end{prop}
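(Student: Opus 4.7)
The plan is to reduce both statements to the Hom-group computations of Propositions~\ref{prop:ellj} and~\ref{prop:elljmo}. First I would take the cofiber sequence
\[
\Sigma^{-1}\ell \to \Sigma^{(2p-2)-1}\ell \to j \to \ell
\]
used in the proof of Proposition~\ref{prop:jell} and extend it via the Puppe construction to
\[
\Sigma^{-1}\ell \to \Sigma^{2p-3}\ell \to j \to \ell \to \Sigma^{2p-2}\ell \to \Sigma j \to \Sigma\ell \to \cdots
\]
Then I would apply $[-,j]$ and $[-,\Sigma j]$ to produce long exact sequences in which $[\Sigma j, j]$ and $[j, \Sigma j]$ are sandwiched between groups of the form $[\Sigma^{q}\ell, j]$, exactly the kind controlled by the two propositions above.

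For $[\Sigma j, j]$, the triple $(\Sigma^{2p-2}\ell, \Sigma j, \Sigma\ell)$ in the Puppe sequence gives the exact segment
\[
[\Sigma\ell, j] \to [\Sigma j, j] \to [\Sigma^{2p-2}\ell, j].
\]
Both flanking groups vanish by Proposition~\ref{prop:ellj}: when $p\geq 3$ neither $1$ nor $2p-2$ is congruent to $-1$ modulo $2p-2$, and both exponents satisfy $q\geq -(2p-2)$.

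For $[j, \Sigma j]$, the triple $(\Sigma^{2p-3}\ell, j, \ell)$ with coefficients in $\Sigma j$ gives the exact segment
\[
[\ell, \Sigma j] \to [j, \Sigma j] \to [\Sigma^{2p-3}\ell, \Sigma j],
\]
and using the shift identity $[\Sigma^{q}\ell, \Sigma j] = [\Sigma^{q-1}\ell, j]$ this reads
\[
[\Sigma^{-1}\ell, j] \to [j, \Sigma j] \to [\Sigma^{2p-4}\ell, j].
\]
The left group is zero by Proposition~\ref{prop:elljmo}, and the right group is zero by Proposition~\ref{prop:ellj}, since $2p-4\equiv -2 \not\equiv -1 \pmod{2p-2}$ for $p\geq 3$ and $2p-4 \geq -(2p-2)$.

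Given that Propositions~\ref{prop:ellell}--\ref{prop:elljmo} have already done the essential work, there is no serious obstacle here; the argument is a short bookkeeping exercise in the Puppe sequence together with a handful of congruence checks modulo $2p-2$.
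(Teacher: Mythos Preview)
Your argument is correct, and it takes a different route from the paper's. You stay entirely within the $\ell$-calculations: you feed the cofiber sequence $\Sigma^{-1}\ell \to \Sigma^{2p-3}\ell \to j \to \ell$ into $[-,j]$ and $[-,\Sigma j]$ and kill the flanking terms using Propositions~\ref{prop:ellj} and~\ref{prop:elljmo}. The paper instead goes through the $K(1)$-localization $J=L_{K(1)}\bS$: for $[\Sigma j,j]$ it uses that $j\to J$ is an equivalence on suitable connective covers to identify $[\Sigma j,j]\cong[\Sigma J,J]\cong\pi_{1}J=0$ directly, and for $[j,\Sigma j]$ it uses the Postnikov-type cofiber sequence $\Sigma^{-1}H\pi_{-1}J\to\Sigma j\to\Sigma j_{-1}$ with $j_{-1}=J[-1,\infty)$. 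Your approach has the virtue of being completely uniform with the surrounding propositions and of not invoking any new ingredients beyond what has already been established; the paper's approach for $[\Sigma j,j]$ is perhaps conceptually cleaner in that it names the obstruction group as $\pi_{1}J$ outright.
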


\begin{proof}
As in the previous proof, we let $j_{-1}=J[-1,\infty)$, and we use the 
cofiber sequence $\Sigma^{-1}H\pi_{-1}J\to \Sigma j\to \Sigma j_{-1}\to H\pi_{-1}J$ and the
induced long exact sequence 
\[
\cdots \to [j,\Sigma^{-1}H\pi_{-1}J]\to [j,\Sigma j]\to 
[j,\Sigma j_{-1}]\to [j,H\pi_{-1}J]\to \cdots .
\]
Since the connective cover of $\Sigma J$ is $\Sigma j_{-1}$, we have
that the map $[j,\Sigma j_{-1}]\to [j,\Sigma J]$ is a bijection, and
the maps
\[
[J,\Sigma J]\to [j,\Sigma J]\to [\bS,\Sigma J]\iso\pi_{-1}J
\]
are isomorphisms.  It follows that the map $[j,\Sigma j_{-1}]\to
[j,H\pi_{-1}J]$ is an isomorphism.  Since
$[j,\Sigma^{-1}H\pi_{-1}J]=0$, this proves $[j,\Sigma j]=0$.  For the
other calculation, the map $j\to J$ induces a weak equivalence of
$1$-connected covers, and the induced map
\[
[\Sigma j,j]\to [\Sigma j,J]\iso [\Sigma J,J] \iso [\Sigma \bS,J]=\pi_{1}J=0
\]
is a bijection.
\end{proof}

The following propositions are now clear.

\begin{prop}\label{prop:TCsummands}
In the notation above, the summands $x(i)$ of $TC(\bZ)\phat$ satisfy
$[x(i),x(i')]=0$ for $i\neq i'$.
\end{prop}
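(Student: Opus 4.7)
The plan is a direct case check that reduces everything to the supporting propositions~\ref{prop:ellell}--\ref{prop:jj}. For $i\neq i'$ the group $[x(i),x(i')]$ decomposes as a finite direct sum over pairs of wedge summands, one from each side, so it suffices to verify the vanishing on each cross-term. Writing the $\ell$-summand of $x(i)$ as $\Sigma^{q_{i}}\ell$ with $q_{0}=-1$, $q_{1}=2p-1$, and $q_{i}=2i-1$ for $2\leq i\leq p-2$, and writing the $j$-summands of $x(0)$ and $x(1)$ as $\Sigma^{0}j$ and $\Sigma^{1}j$, the cross-terms fall into four families: $j$-to-$j$ (only possible between $x(0)$ and $x(1)$), $j$-to-$\ell$, $\ell$-to-$j$, and $\ell$-to-$\ell$.

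The $j$-to-$j$ case is handled directly by Proposition~\ref{prop:jj}. In the $j$-to-$\ell$ case, $[\Sigma^{a}j,\Sigma^{q_{i'}}\ell]\iso[j,\Sigma^{q_{i'}-a}\ell]$, and enumerating the combinations $(a,i')$ compatible with $i\neq i'$ (where the $\Sigma^{a}j$ sits in $x(i)$), the shifts $q=q_{i'}-a$ that arise lie in the set
\[
S=\{-2,2p-1\}\cup\{3,5,\ldots,2p-5\}\cup\{2,4,\ldots,2p-6\}.
\]
Every element of $S$ is nonzero modulo $2p-2$ (for the two sporadic values, $-2\equiv 2p-4$ and $2p-1\equiv 1$, while the remaining elements lie strictly between $0$ and $2p-2$) and satisfies $q<2(2p-2)$, so Proposition~\ref{prop:jell} applies. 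In the $\ell$-to-$j$ case, $[\Sigma^{q_{i}}\ell,\Sigma^{a}j]\iso[\Sigma^{q_{i}-a}\ell,j]$, and the shifts $q=q_{i}-a$ that arise form the same set $S$; none of its elements equals $-1$ modulo $2p-2$ (which is $2p-3$), and each lies in $[-(2p-2),\infty)$, so Proposition~\ref{prop:ellj} applies.

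Finally, for $\ell$-to-$\ell$ cross-terms, $[\Sigma^{q_{i}}\ell,\Sigma^{q_{i'}}\ell]\iso[\ell,\Sigma^{q_{i'}-q_{i}}\ell]$. The differences $q_{i'}-q_{i}$ for $i\neq i'$ range over nonzero even integers of absolute value at most $2p$: between indices in $\{2,\ldots,p-2\}$ they are $\pm 2(i'-i)$ with $|i'-i|\leq p-4$; between one index in $\{0,1\}$ and the other in $\{2,\ldots,p-2\}$ they have the form $\pm 2i'$ or $\pm 2(p-i')$, bounded by $2p-4$; and for the pair $(0,1)$ one gets $\pm 2p\equiv\pm 2\pmod{2p-2}$. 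In every case the reduction modulo $2p-2$ is nonzero and $|q|<2(2p-2)$, so Proposition~\ref{prop:ellell} applies.

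The main obstacle is purely organizational bookkeeping: one must verify that for every ordered pair $(i,i')$ with $i\neq i'$, each cross-term shift avoids the ``bad'' residue mod $2p-2$ and lies within the range of the relevant proposition. No new input beyond the preceding propositions is required.
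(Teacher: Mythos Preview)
Your proposal is correct and follows exactly the approach the paper intends: the paper gives no explicit proof, declaring the proposition ``clear'' immediately after Propositions~\ref{prop:ellell}--\ref{prop:jj}, and your case analysis simply fills in the bookkeeping (correctly) that the paper omits. One minor remark: Proposition~\ref{prop:elljmo} is not needed here (the shift $q=-1$ never arises among the cross-terms with $i\neq i'$); it is used instead for Proposition~\ref{prop:splitjp}.
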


\begin{prop}\label{prop:splitjp}
Let $k=0,1$.  In the notation above, $[x(i),\Sigma^{k} j]=0$ for
$i\neq k$ and the inclusion of $\Sigma^{k} j$ in $x(k)$ induces a
bijection $[x(k),\Sigma^{k} j]\to [\Sigma^{k} j,\Sigma^{k} j]$.
\end{prop}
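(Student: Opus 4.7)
The plan is to use the wedge decompositions $x(0)=j\vee \Sigma^{-1}\ell$, $x(1)=\Sigma j\vee \Sigma^{2p-1}\ell$, and $x(i)=\Sigma^{2i-1}\ell$ for $2\leq i\leq p-2$ from the discussion of~\eqref{eq:TCZmain}, and to reduce every hom set appearing in the statement to one between shifted copies of $j$ and $\ell$ already handled in Propositions~\ref{prop:jj}, \ref{prop:ellj}, and~\ref{prop:elljmo}. Since mapping out of a wedge splits as a direct sum, each $[x(i),\Sigma^{k}j]$ decomposes into at most two pieces, and the inclusion $\Sigma^{k}j\hookrightarrow x(k)$ induces the projection onto the $[\Sigma^{k}j,\Sigma^{k}j]$-piece.

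For the vanishing $[x(i),\Sigma^{k}j]=0$ when $i\neq k$, the terms of the form $[\Sigma j,j]$ or $[j,\Sigma j]$ that arise when $i$ or $k$ equals $1$ vanish by Proposition~\ref{prop:jj}. The remaining pieces all take the form $[\Sigma^{q-k}\ell,j]$, where $q-k$ is $2p-1$ (for $i=1$, $k=0$), $-2$ (for $i=0$, $k=1$), $2i-1$ (for $i\in[2,p-2]$, $k=0$), or $2i-2$ (for $i\in[2,p-2]$, $k=1$). In each case $q-k$ lies in $[-(2p-2),2p-2]$ and is not $\equiv -1\pmod{2p-2}$; the only nontrivial check is that $2i-1\not\equiv -1\pmod{2p-2}$ in the range $2\leq i\leq p-2$, which is immediate since that congruence would force $i\equiv 0\pmod{p-1}$. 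Proposition~\ref{prop:ellj} then kills each of these pieces.

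For the bijection, the task reduces to showing that the $[\Sigma^{q_{k}}\ell,\Sigma^{k}j]$-piece of $[x(k),\Sigma^{k}j]$ vanishes, where $q_{0}=-1$ and $q_{1}=2p-1$. For $k=1$ this piece is $[\Sigma^{2p-2}\ell,j]$, which vanishes by Proposition~\ref{prop:ellj} since $2p-2\equiv 0\not\equiv -1\pmod{2p-2}$. The one genuinely delicate case is $k=0$, where Proposition~\ref{prop:ellj} is silent because $-1\equiv -1\pmod{2p-2}$; this is precisely the boundary case that Proposition~\ref{prop:elljmo} was set up to handle, and invoking it to conclude $[\Sigma^{-1}\ell,j]=0$ completes the proof.
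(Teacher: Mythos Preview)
Your proof is correct and is precisely the verification the paper intends: the paper simply declares the proposition ``now clear'' from Propositions~\ref{prop:jj}, \ref{prop:ellj}, and~\ref{prop:elljmo}, and you have spelled out the case analysis. One minor slip: the value $q-k=2p-1$ (arising from $i=1$, $k=0$) does not actually lie in the interval $[-(2p-2),2p-2]$ you claim, but since Proposition~\ref{prop:ellj} imposes only the lower bound $q\geq -(2p-2)$, the argument is unaffected.
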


Eliminating the summands where maps out of $j$ or $\Sigma j$ are
trivial, and looking at the connective and $0$-connected covers of
$K(1)$-localizations, we get the following proposition.

\begin{prop}\label{prop:jvspi}
The map $\bS\to j$ induces isomorphisms $[j,TC(\bZ)\phat]\iso
\pi_{0}TC(\bZ)\phat$ and $[\Sigma j,TC(\bZ)\phat]\iso
\pi_{1}TC(\bZ)\phat$. 
\end{prop}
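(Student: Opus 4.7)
Plan: I would use the $(-1)$-connectedness of $j$ (resp.\ the $0$-connectedness of $\Sigma j$) to replace $TC(\bZ)\phat$ by its connective (resp.\ $0$-connected) cover, then invoke the B\"okstedt--Madsen splitting $TC(\bZ)\phat[0,\infty) \simeq j \vee \Sigma j \vee \Sigma^{3}ku\phat$ recalled above to split each hom set into three summands. Two are killed by Proposition~\ref{prop:jj}; the main summand reduces to $[j,j]$; the only real work is eliminating the $\Sigma^3 ku\phat$ summand by a $K(1)$-localization argument.

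For the main summand $[j,j]$, I would identify it with $\pi_0 j$ via the commutative square
\[
\xymatrix{
[j,j] \ar[r] \ar[d] & [j,J] \ar[d] \\
\pi_0 j \ar[r]_-{\iso} & \pi_0 J
}
\]
where horizontals are induced by the $K(1)$-localization $j \to J = L_{K(1)}\bS$ (the bottom map is the isomorphism from $j = J[0,\infty)$) and verticals by precomposition with the unit $\bS \to j$. The top horizontal is injective because the fiber of $j \to J$ is $\Sigma^{-1}H\bZp$ and $[j, \Sigma^{-1}H\bZp] = H^{-1}(j;\bZp) = 0$. The right vertical is the composite isomorphism $[j,J] \iso [J,J] \iso [\bS, J] = \pi_0 J$ using $K(1)$-locality of $J$ and $J = L_{K(1)}\bS$. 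Hence the left vertical is injective, and since $\mathrm{id}_j$ maps to the generator $1 \in \pi_0 j$, it is also surjective. Suspending, $[\Sigma j,\Sigma j] \iso [j,j] \iso \pi_0 j \iso \pi_1 TC(\bZ)\phat$ (the last isomorphism because only the $\Sigma j$ summand contributes to $\pi_1$).

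The remaining easy summands vanish: $[j, \Sigma j] = 0$ directly by Proposition~\ref{prop:jj}, and $[\Sigma j, j[1,\infty)] \iso [\Sigma j, j] = 0$ also by Proposition~\ref{prop:jj} (using that $\Sigma j$ is $0$-connected, so the inclusion $j[1,\infty) \to j$ of the $0$-connected cover induces a bijection on $[\Sigma j,-]$).

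The main obstacle is showing $[\Sigma^k j, \Sigma^3 ku\phat] = 0$ for $k \in \{0,1\}$. I would compare with the $K(1)$-localization $\Sigma^3 ku\phat \to \Sigma^3 KU\phat$:
\[
[\Sigma^k j, \Sigma^3 KU\phat] \iso [L_{K(1)}\Sigma^k j, \Sigma^3 KU\phat] \iso [\Sigma^k \bS, \Sigma^3 KU\phat] = \pi_{k-3}KU\phat,
\]
which is $0$ for $k=0$ and $\bZp$ for $k=1$. The fiber $F$ of the localization has $\pi_n F \iso \bZp$ in even degrees $\leq 0$ and $0$ elsewhere, computable directly from the homotopy long exact sequence. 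Postnikov analysis of maps out of $j$---using that $H^q(j;\bZp) = 0$ for $q<0$ and that $H^1(j;\bZp) = 0$ since $\pi_0 j$ is free and $\pi_1 j = 0$---yields $[j,F] \iso \bZp$ from the top Postnikov piece, $[\Sigma j,\Sigma F] \iso \bZp$, and $[\Sigma j,F] = 0 = [j,\Sigma F]$. The connecting map $\pi_1 \Sigma^3 KU\phat \to \pi_0 F$ in the homotopy long exact sequence of $F \to \Sigma^3 ku\phat \to \Sigma^3 KU\phat$ is an isomorphism (since $\pi_0 \Sigma^3 ku\phat = \pi_1 \Sigma^3 ku\phat = 0$), which translates into the relevant $\bZp \to \bZp$ connecting map on hom sets being an isomorphism, forcing $[\Sigma^k j, \Sigma^3 ku\phat] = 0$ in both cases. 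Carefully tracking this connecting map on homotopy groups is the hardest part of the argument.
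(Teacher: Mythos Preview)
Your argument is correct and complete for the stated proposition, but it takes a genuinely different route from the paper's.  The paper does not keep $\Sigma^{3}ku\phat$ intact: it uses the finer Adams decomposition $TC(\bZ)\phat\simeq x(0)\vee\cdots\vee x(p-2)$ with each $x(i)$ built from $j$, $\Sigma j$, and copies of $\Sigma^{2i-1}\ell$, and then kills every $\ell$-summand in $[\Sigma^{k}j,-]$ directly from Proposition~\ref{prop:jell} (together with Proposition~\ref{prop:jj} for the $j$/$\Sigma j$ cross-terms), leaving only $[\Sigma^{k}j,\Sigma^{k}j]$, which is identified with $\pi_{k}TC(\bZ)\phat$ by the connective-cover/$K(1)$-localization trick you also use.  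Your approach trades the Adams splitting and Proposition~\ref{prop:jell} for the fiber sequence $F\to\Sigma^{3}ku\phat\to\Sigma^{3}KU\phat$ and a diagram chase comparing the connecting map on $[\Sigma^{k}j,-]$ with the connecting map on homotopy groups.  This is more self-contained (you never need the decomposition of $ku\phat$ into Adams pieces), at the cost of a longer argument; the paper's route is shorter precisely because Proposition~\ref{prop:jell} was already proved for other purposes.

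One small cleanup: your claim $[j,\Sigma F]=0$ via ``$H^{1}(j;\bZp)=0$ since $\pi_{0}j$ is free and $\pi_{1}j=0$'' is not actually used anywhere---for $k=0$ you only need the map $[j,\Sigma^{2}KU\phat]\to[j,F]$, and for $k=1$ only the map $[\Sigma j,\Sigma^{3}KU\phat]\to[\Sigma j,\Sigma F]$.  The justification you give is also too quick ($\pi_{0}j=\bZp$ is not free over $\bZ$; what makes $H^{0}(j;\bZp)\iso\bZp$ work is the special fact $\Hom_{\bZ}(\bZp,\bZp)=\bZp$, and for $H^{1}$ one would need to control $H_{1}(H\bZp;\bZ)$).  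Since the claim is unnecessary, you can simply drop it.
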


For the summands of $K(\bZ)\phat$, we first consider the splitting of $j$.

\begin{prop}\label{prop:jtKZ}
$[j,\tK(\bZ)]=0$ and $[\tK(\bZ),j]=0$
\end{prop}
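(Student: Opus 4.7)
The plan is to combine the canonical Bousfield splitting $\tLK(\bZ)\simeq Y_{0}\vee\cdots\vee Y_{p-2}$ with the fact that $\tK(\bZ)$ is the $4$-connected cover of $\tLK(\bZ)$. Since the $4$-connected cover functor $\tau_{\geq 5}$ is a right adjoint and so preserves finite wedges, we get $\tK(\bZ)\simeq y_{0}\vee\cdots\vee y_{p-2}$ where $y_{i}=\tau_{\geq 5}Y_{i}$, reducing both claims to the vanishing of $[j,y_{i}]$ and $[y_{i},j]$ for each $i$.

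Two cases are immediate. First, $y_{p-2}\simeq *$ by~\eqref{eq:kvanish}. Second, $y_{0}$ is non-canonically equivalent to $\Sigma^{2p-1}\ell$, and Propositions~\ref{prop:jell} and~\ref{prop:ellj} applied with $q=2p-1$ give both vanishings: $2p-1$ is odd and hence $\not\equiv 0\pmod{2p-2}$, while $2p\equiv 2\pmod{2p-2}$ so $2p-1\not\equiv -1\pmod{2p-2}$, and the range hypotheses $q<2(2p-2)$ and $q\geq-(2p-2)$ are clearly met.

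For the remaining cases $1\leq i\leq p-3$ I would use the fiber sequence $Y_{i}\to A_{i}\to B_{i}$ in which $A_{i}$ and $B_{i}$ are finite wedges of copies of $\Sigma^{2i+1}L$. Applying $\tau_{\geq 5}$ yields a fiber sequence $y_{i}\to a_{i}\to b_{i}$ where $a_{i}$ and $b_{i}$ are finite wedges of $\tau_{\geq 5}\Sigma^{2i+1}L$. For $i\geq 2$ we have $\tau_{\geq 5}\Sigma^{2i+1}L\simeq\Sigma^{2i+1}\ell$, while for $i=1$ the periodicity $\Sigma^{2p-2}L\simeq L$ identifies $\tau_{\geq 5}\Sigma^{3}L\simeq\Sigma^{2p+1}\ell$. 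Writing $q_{i}=2i+1$ for $i\geq 2$ and $q_{1}=2p+1$, the long exact sequences for $[j,-]$ and $[-,j]$ attached to $y_{i}\to a_{i}\to b_{i}$ reduce the claim to the vanishing of $[j,\Sigma^{q}\ell]$ and $[\Sigma^{q}\ell,j]$ for $q\in\{q_{i}-1,q_{i}\}$. In each case a short congruence check confirms that $q$ lies in the applicable range and outside the forbidden classes $0$ and $-1$ modulo $2p-2$, so Propositions~\ref{prop:jell} and~\ref{prop:ellj} give the vanishing.

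I expect the main technical point to be the $i=1$ identification $\tau_{\geq 5}\Sigma^{3}L\simeq\Sigma^{2p+1}\ell$: the naive connective cover $\Sigma^{3}\ell$ still has a nonzero $\pi_{3}$ which must be killed, forcing the use of periodicity to exhibit the $4$-connected cover as a genuine suspension of $\ell$. Once this identification is in place the rest of the argument is a routine application of the hom-set calculations already established together with bookkeeping modulo $2p-2$.
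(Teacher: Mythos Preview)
Your strategy matches the paper's: split $\tK(\bZ)$ into the pieces $y_i$, dispose of the special cases, and for the remaining $y_i$ use a resolution by suspensions of $\ell$ together with Propositions~\ref{prop:jell} and~\ref{prop:ellj}. The congruence bookkeeping is fine, and your treatment of $i=1$ via periodicity (rather than invoking $y_1\simeq *$ as the paper does) also works.

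There is, however, a genuine gap in the step ``Applying $\tau_{\geq 5}$ yields a fiber sequence $y_i\to a_i\to b_i$.'' The functor $\tau_{\geq 5}$ is right adjoint to the inclusion $\text{Spectra}_{\geq 5}\hookrightarrow\text{Spectra}$, so it takes the pullback square defining $Y_i$ to a pullback square in $\text{Spectra}_{\geq 5}$. But the long exact sequences for $[j,-]$ and $[-,j]$ require a fiber sequence in $\text{Spectra}$, and the inclusion does \emph{not} preserve limits. Concretely, if $F'=\text{fib}(a_i\to b_i)$ is computed in $\text{Spectra}$, then $\pi_4 F' = \mathrm{coker}(\pi_5 A_i\to\pi_5 B_i)$, which need not vanish a priori. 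For $i\geq 3$ and for $i=1$ this cokernel is zero for trivial degree reasons (the relevant $\pi_5$ groups vanish), but for $i=2$ one has $\pi_5(\Sigma^5 L)=\bZp$, and you need $\pi_4 Y_2=0$ to conclude $F'\simeq y_2$. This holds---it follows from the $4$-connectivity of $\tK(\bZ)$ together with the Quillen--Lichtenbaum isomorphism in degree $4$---but it is an input, not a formal consequence of $\tau_{\geq 5}$ being a right adjoint.

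The paper sidesteps this by asserting directly that $y_i$ (for $1<i<p-2$) is the cofiber of a map $\bigvee\Sigma^{2i}\ell\to\bigvee\Sigma^{2i}\ell$ in $\text{Spectra}$, which builds in the same vanishing but presents it as a structural fact about $y_i$ rather than as preservation of a limit. Either way the missing ingredient is the same: you must check that passing to connective covers does not introduce a spurious $\pi_4$, and the $i=2$ case is where this is not automatic.
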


\begin{proof}
As indicated above $\tK(\bZ)\simeq y_{0}\vee \cdots \vee y_{p-2}$.  We
have that $y_{1}$ is (non-canonically) weakly equivalent to
$\Sigma^{2p-1}\ell$, and applying Propositions~\ref{prop:jell},
\ref{prop:ellj}, we see that $[j,y_{1}]=0$ and $[y_{1},j]=0$.  In
addition, $y_{2}\simeq *$ and $y_{0}\simeq *$.  For $2<i\leq p-2$,
$y_{i}$ is the fiber of a map from a finite wedge of copies of
$\Sigma^{2i-1}\ell$ to a finite wedge of copies of $\Sigma^{2i-1}\ell$.
Looking at the long exact sequences
\begin{gather*}
\cdots \to \bigoplus [j,\Sigma^{2i-2}\ell]\to [j,y_{i}]\to 
\bigoplus [j,\Sigma^{2i-1}\ell]\to \cdots\\
\cdots\to \prod [\Sigma^{2i-1}\ell,j]\to [y_{i},j]\to \prod
[\Sigma^{2i-2}\ell,j]\to \cdots,
\end{gather*}
we again see from Propositions~\ref{prop:jell} and~\ref{prop:ellj}
that $[j,y_{i}]=0$ and $[y_{i},j]=0$.
\end{proof}

Finally, the
spectra $Y_{i}$ clearly satisfy $[Y_{i},Y_{i'}]=0$ for $i\neq i'$; we now
verify that the same holds for the covers $y_{i}$.

\begin{prop}\label{prop:KZsummands}
In the notation above, the summands $y_{i}$ of $\tK(\bZ)$ satisfy
$[y_{i},y_{i'}]=0$ for $i\neq i'$.
\end{prop}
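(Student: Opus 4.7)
The plan is to parallel the argument of Proposition~\ref{prop:jtKZ}: I would reduce $[y_i, y_{i'}]$ to hom groups of the form $[\Sigma^a\ell, \Sigma^b\ell]$ by invoking the cofiber descriptions of $y_i$ and $y_{i'}$ set up above, and then apply Propositions~\ref{prop:ellell} and \ref{prop:ellj}. The cases $y_1 \simeq *$ and $y_{p-2} \simeq *$ are immediate, so I may assume $i, i' \in \{0\} \cup \{2, \ldots, p-3\}$; recall that $y_0 \simeq \Sigma^{2p-1}\ell$, and for $2 \leq k \leq p-3$ there is a cofiber sequence $A_k \to B_k \to y_k$ with $A_k, B_k$ finite wedges of $\Sigma^{2k}\ell$.

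Applying $[-, y_{i'}]$ to the cofiber sequence presenting $y_i$ (when $i \neq 0$) and $[\Sigma^a\ell, -]$ to the cofiber sequence presenting $y_{i'}$ (when $i' \neq 0$) sandwiches $[y_i, y_{i'}]$ between hom groups of the form $[\Sigma^a\ell, \Sigma^b\ell] \iso [\ell, \Sigma^{b-a}\ell]$, where $a \in \{2i, 2i+1\}$ (or $a = 2p-1$ if $i=0$) and similarly for $b$. A routine case check using $i, i' \in \{0, 2, \ldots, p-3\}$ and $i \neq i'$ verifies that in every instance $b - a$ is a nonzero integer with $|b - a| \leq 2p - 5$.

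When $b \geq a$, Proposition~\ref{prop:ellell} directly gives $[\ell, \Sigma^{b-a}\ell] = 0$. When $a > b$, I would rewrite the group as $[\Sigma^{a-b}\ell, \ell]$ and use the cofiber sequence $\Sigma^{2p-3}\ell \to j \to \ell \to \Sigma^{2p-2}\ell$ (a rotation of the triangle preceding Proposition~\ref{prop:jell}) to sandwich $[\Sigma^{a-b}\ell, \ell]$ between $[\Sigma^{a-b}\ell, j]$, which vanishes by Proposition~\ref{prop:ellj} because $0 < a - b \leq 2p - 5 < 2p - 3$, and $[\Sigma^{a-b}\ell, \Sigma^{2p-2}\ell] \iso [\ell, \Sigma^{2p-2-(a-b)}\ell]$, which vanishes by Proposition~\ref{prop:ellell} because $0 < 2p - 2 - (a-b) < 2p - 2$. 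The main obstacle is the degree bookkeeping just indicated: one must verify that every shift arising actually lies in the ranges where these propositions apply, which reduces to the bound $|b-a| \leq 2p - 5$ and follows from $0 \leq i, i' \leq p-3$ together with the explicit form of $y_0$.
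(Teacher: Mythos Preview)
Your proposal is correct and follows essentially the same route as the paper: unwind both cofiber presentations of $y_{i}$ and $y_{i'}$ to reduce $[y_{i},y_{i'}]$ to groups $[\ell,\Sigma^{q}\ell]$ with $0<|q|<2p-2$, then invoke Proposition~\ref{prop:ellell}. One remark: your separate treatment of the case $a>b$ via the $j$--$\ell$ cofiber sequence and Proposition~\ref{prop:ellj} is unnecessary, since Proposition~\ref{prop:ellell} already applies to negative shifts (its hypothesis is only $q<2(2p-2)$ and $q\not\equiv 0\pmod{2p-2}$), so $[\ell,\Sigma^{b-a}\ell]=0$ follows directly in that case as well.
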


\begin{proof}
Each $y_{i}$ is the fiber of a map from a finite
wedge of copies of $\Sigma^{2i-1}\ell$ to a finite wedge of copies of
$\Sigma^{2i-1}\ell$ except that $y_{1}\simeq \Sigma^{2p-1}\ell$
(non-canonically), $y_{0}\simeq *$, and $y_{2}\simeq *$.  First, for
$i>2$, looking at the 
long exact sequence 
\[
\cdots\to \prod [\Sigma^{2i-1}\ell,\Sigma^{q}\ell]\to [y_{i},\Sigma^{q}\ell]\to \prod
[\Sigma^{2i-2}\ell,\Sigma^{q}\ell]\to \cdots,
\]
we see from Proposition~\ref{prop:ellell} that
$[y_{i},\Sigma^{q}\ell]=0$ when $q\not\equiv 2i-1,2i-2\mod{(2p-2)}$ and
$q\leq 2(2p-2)+2i-3$.  In particular, $[y_{i},y_{1}]=0$ for $i>2$.
For $i'>2$, looking at the long exact sequence 
\[
\cdots \to \bigoplus [y_{i},\Sigma^{2i'-2}\ell]\to [y_{i},y_{i'}]\to 
\bigoplus [y_{i},\Sigma^{2i'-1}\ell]\to \cdots,
\]
we see that $[y_{i},y_{i'}]=0$ for $i\neq i'$ in the remaining cases.
\end{proof}

\subsection*{The homotopy type of \texorpdfstring{$TC(\bZ)\phat$}{TC(Z)p}}\label{subsec:newsec}

To determine the homotopy type of $TC(\bZ)\phat$ from that of
$TC(\bZ)\phat[0,\infty )$, we need to study the cofiber sequence
\[
\Sigma^{-2}H\bZp\to TC(\bZ)\phat[0,\infty)\to
TC(\bZ)\phat\to \Sigma^{-1} H\bZp.
\]
The argument of \cite[3.3]{Rognesp} studies the induced cofiber
sequence on the cofiber of the inclusion of $j$, 
\[
\Sigma^{-2}H\bZp\to C\big(j\rightarrow TC(\bZ)\phat[0,\infty)\big)\to
C\big(j\rightarrow TC(\bZ)\phat\big)\to \Sigma^{-1} H\bZp,
\]
and shows that $C\big(j\rightarrow TC(\bZ)\phat\big)$ has the homotopy
type of
\[
\Sigma j\vee
\Sigma^{3}\ell \vee \cdots \vee \Sigma^{2p-5}\ell\vee \Sigma^{-1}\ell
\vee \Sigma^{2p-1}\ell.
\]
In terms of the non-canonical weak equivalence
\[
C\big(j\rightarrow TC(\bZ)\phat[0,\infty)\big)\simeq 
\Sigma j\vee \Sigma^{3}ku\phat,
\]
the map $\Sigma^{-2}H\bZp\to C\big(j\rightarrow
TC(\bZ)\phat[0,\infty)\big)$ factors through a map
\[
\Sigma^{-2}H\bZp\overto{\lambda} \Sigma^{2p-3}\ell\overto{\alpha} \Sigma^{3}ku\phat
\]
where $\alpha \colon \Sigma^{2p-3}\ell\to \Sigma^{3}ku\phat$ is the inclusion of an
Adams summand and the map $\lambda \colon \Sigma^{-2}H\bZp\to \Sigma^{2p-3}\ell$ is a
generator of $[\Sigma^{-2}H\bZp,\Sigma^{2p-3}\ell]\iso \bZp$.  It
follows in particular that pulling back along $\lambda$ induces an
isomorphism $[\Sigma^{2p-3}\ell,j]\to [\Sigma^{-2}H\bZp,j]$. 
The splitting 
\[
TC(\bZ)\phat[0,\infty)\simeq j\vee C\big(j\rightarrow TC(\bZ)\phat[0,\infty)\big)
\]
is non-canonical, determined by a choice of map
$TC(\bZ)\phat[0,\infty)\to j$ (in the stable category) such that the
composite map $j\to j$ is the identity; choosing an arbitrary such
map, we can alter it by a map $\Sigma^{2p-3}\ell\to j$ to get a
splitting where the composite map 
\[
\Sigma^{-2}H\bZp\to TC(\bZ)\phat[0,\infty)\to j
\]
is the zero map.  It follows that $TC(\bZ)\phat$ is non-canonically
weakly equivalent to
\[
j\vee \Sigma j\vee
\Sigma^{3}\ell \vee \cdots \vee \Sigma^{2p-5}\ell\vee \Sigma^{-1}\ell
\vee \Sigma^{2p-1}\ell.
\]

\section{The maps in the linearization/cyclotomic trace square}\label{sec:maps}

The previous section discussed the corners of the
linearization/cyclotomic trace square; in this section, we discuss the
edges.  The main observation is that with respect to the canonical
splittings of the previous section, the cyclotomic trace is diagonal
and the linearization map is diagonal on the $p$-torsion part of the homotopy groups.

\begin{thm}\label{thm:tracediag}
In terms of the splittings~\eqref{eq:TCZmain} and~\eqref{eq:KZmain} of
the previous section, the cyclotomic trace $K(\bZ)\phat\to
TC(\bZ)\phat$ splits as the wedge of the identity map $j\to j$ and maps
\begin{align*}
y_{0}&\to\Sigma^{-1}\ell_{TC}(0)\\
y_{1}&\to\Sigma^{-1}\ell_{TC}(p)\\
y_{2}&\to\Sigma^{-1}\ell_{TC}(2)\\
\vdots\ &\hphantom{\overto{=}}\ \ \vdots\\
y_{p-2}&\to\Sigma^{-1}\ell_{TC}(p-2).
\end{align*}
\end{thm}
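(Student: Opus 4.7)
The plan is to leverage the canonical splittings of $K(\bZ)\phat$ and $TC(\bZ)\phat$ from Section~\ref{sec:review}. Because both splittings are canonical in the stable category, the cyclotomic trace $K(\bZ)\phat \to TC(\bZ)\phat$ decomposes as a matrix of components between summands, obtained by pre-composing with the canonical inclusion of each summand of $K(\bZ)\phat$ and post-composing with the canonical projection onto each summand of $TC(\bZ)\phat$. It suffices to show that all components outside those listed in the theorem are zero and that the $j\to j$ component is the identity: once these are established, the stated wedge decomposition of the trace follows formally.

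The off-diagonal vanishings follow from the hom-set results of Section~\ref{sec:review}. Proposition~\ref{prop:jtKZ} gives $[y_i,j]=0$. For $[y_i,\Sigma j']\iso[y_i,\Sigma j]$ (using $\Sigma j'\simeq\Sigma j$ non-canonically), the long exact sequence associated with the cofiber description of $y_i$ in Section~\ref{sec:review} reduces to groups of the form $[\Sigma^s\ell,j]$ with $s$ neither $\equiv-1\pmod{2(p-1)}$ nor equal to $-1$, which vanish by Propositions~\ref{prop:ellj} and~\ref{prop:elljmo}. Proposition~\ref{prop:jj} gives $[j,\Sigma j']=0$, and Proposition~\ref{prop:jell} gives $[j,\Sigma^{-1}\ell_{TC}(k)]=0$ for every $k$ in the target index set: since $\Sigma^{-1}\ell_{TC}(k)$ is non-canonically a suspension $\Sigma^{2k-1}\ell$ (or $\Sigma^{2p-1}\ell$ for $k=p$), the exponent $2k-1$ is odd and hence not $\equiv 0\pmod{2(p-1)}$. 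For the $y_i\to\Sigma^{-1}\ell_{TC}(k)$ components, Proposition~\ref{prop:ellell} combined with the cofiber description of $y_i$ forces $[y_i,\Sigma^{2k-1}\ell]=0$ unless $k\equiv i+1\pmod{p-1}$. Within the index set $\{0,p,2,3,\ldots,p-2\}$ (where $p$ represents the residue class $1\bmod(p-1)$), this constraint uniquely identifies the $\ell_{TC}$-summand listed in the theorem for each $i$: $y_0\simeq\Sigma^{2p-1}\ell$ picks out $k=p$, each $y_i$ for $2\leq i\leq p-3$ picks out $k=i+1$, and $y_{p-2}$ picks out $k=0$. The pairings for $y_1$ and $y_{p-2}$ are vacuously satisfied since both summands are contractible (from the vanishings $\epsilon_{-1}A_\infty=0$ and $\epsilon_1A_\infty=0$ noted in Section~\ref{sec:review}).

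Finally, the $j\to j$ component is the identity because the cyclotomic trace is a map of ring spectra: it sends the canonical unit $\bS\to j\hookrightarrow K(\bZ)\phat$ to the canonical unit $\bS\to j\hookrightarrow TC(\bZ)\phat$, and Proposition~\ref{prop:jvspi} together with the uniqueness of the retraction onto the $j$-summand forces the composite $j\to K(\bZ)\phat\to TC(\bZ)\phat\to j$ to be the identity. The main obstacle will be the cell-structure bookkeeping to derive the constraint $k\equiv i+1\pmod{p-1}$ and to correctly account for the renaming convention $1\mapsto p$ in the target index set.
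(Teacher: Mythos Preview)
Your proof is correct and follows the same strategy as the paper: decompose the cyclotomic trace as a matrix of components and show the off-diagonal hom-sets vanish using the cofiber description of the $y_i$ together with Propositions~\ref{prop:ellell} and~\ref{prop:jell}. You are in fact more thorough than the paper's own proof, which only explicitly treats $[y_i,\Sigma^{-1}\ell_{TC}(q)]$ and $[j,\Sigma^{-1}\ell_{TC}(q)]$, leaving the vanishing of the components into the $j$ and $\Sigma j'$ summands and the identification of the $j\to j$ component as the identity implicit from the surrounding discussion in Section~\ref{sec:review}.
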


\begin{proof}
We have that each $y_{i}$ fits into a fiber sequence of the form
\[
y_{i}\to
\bigvee \Sigma^{2i-1}\ell\to 
\bigvee \Sigma^{2i-1}\ell
\]
except in the case $i=1$ where the suspension is $\Sigma^{2p-1}\ell$
rather than $\Sigma^{1}\ell$
(and the cases $i=0$ and $i=2$, where $y_{i}=*$ anyway).  Choosing a
non-canonical weak equivalence $\Sigma^{2q-1}\ell\simeq
\Sigma^{-1}\ell_{TC}(q)$, and looking at the long exact sequences of
maps into $\Sigma^{2q-1}\ell$, Proposition~\ref{prop:ellell} implies
$[y_{i},\Sigma^{-1}\ell_{TC}(q)]=0$ unless $q \equiv i\mod {(p-1)}$.
Likewise, $[j,\Sigma^{-1}\ell_{TC}(q)]=0$ for all $q$ by
Proposition~\ref{prop:jell}. 
\end{proof}

Next, we turn to the linearization map.

\begin{thm}\label{thm:lindiag}
In terms of the splittings~\eqref{eq:TCS}, \eqref{eq:fubar},
and~\eqref{eq:TCZmain} of the previous section, the linearization 
map $TC(\bS)\phat\to TC(\bZ)\phat$ admits factorizations as follows:
\begin{enumerate}
\item The map $\bS\phat\to TC(\bZ)\phat$ factors through the canonical
map $\bS\phat\to j$, which is a canonically split surjection on
homotopy groups in all degrees.
\item The map $\Sigma \bS\phat\to TC(\bZ)\phat$ factors through a
map $\Sigma \bS\phat\to \Sigma j'$ that is an isomorphism on
$\pi_{1}$ and is a split surjection on homotopy groups in all
degrees. 
\item The map $\fubar\to TC(\bZ)\phat$ factors through
$j\vee \bigvee \Sigma^{-1}\ell_{TC}(i)$ (for $i=0$, $p$, $2$,\dots,
$p-2$).  On $p$-torsion, the map  
$\ptor(\pi_{*}(\fubar))\to\ptor(\pi_{*}(TC(\bZ)\phat))$ is
zero. 
\end{enumerate}
\end{thm}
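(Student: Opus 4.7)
The plan is to prove each part by composing the linearization map with the projections onto the summands of $TC(\bZ)\phat$ in the canonical splitting~\eqref{eq:TCZmain}, via Hom-set calculations analogous to those in the Supporting Calculations subsection. Part~(i) is almost immediate: the composite $\bS\phat\to TC(\bS)\phat\to TC(\bZ)\phat$ is the unit of the ring spectrum structure on $TC(\bZ)\phat$, which by Proposition~\ref{prop:jvspi} factors through $j$ as the canonical map $\bS\phat\to j$ followed by the canonical summand inclusion. The Bousfield splitting $\pi_{*}\bS\phat\iso \pi_{*}j\oplus \pi_{*}c$ then exhibits $\pi_{*}\bS\phat\to \pi_{*}j$ as a canonically split surjection.

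For part~(ii), I first check that the composites of $\Sigma\bS\phat\to TC(\bZ)\phat$ with the projections onto the summands of~\eqref{eq:TCZmain} other than $\Sigma j'$ vanish. The projection to $j$ lives in $[\Sigma\bS\phat,j]\iso \pi_{1}j$, which is zero at odd primes. The projection to $\Sigma^{-1}\ell_{TC}(i)$ lives in $[\Sigma\bS\phat,\Sigma^{-1}\ell_{TC}(i)]\iso \pi_{2-2i}\ell$ (using the non-canonical identification $\Sigma^{-1}\ell_{TC}(i)\simeq \Sigma^{2i-1}\ell$), and this vanishes for every $i\in\{0,p,2,\ldots,p-2\}$ by connectivity and the sparseness of $\pi_{*}\ell$. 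Thus the map factors through $\Sigma j'$. Known calculations of the effect of the linearization map on $\pi_{1}$ (e.g.\ Hesselholt-Madsen~\cite[Theorem~D]{HM1}) identify the induced $\pi_{1}$-map as the canonical isomorphism $\bZp\to (\bZpt)\phat=\pi_{0}j'$. Since $[\Sigma\bS\phat,\Sigma j']\iso \pi_{0}j'$ by connectivity, the factored map is determined by this $\pi_{1}$-behavior and therefore agrees with $\Sigma$ of the canonical $\bS\phat\to j$ followed by a choice of identification $\Sigma j\simeq \Sigma j'$ realizing the $\pi_{1}$-isomorphism. Split surjectivity on all homotopy groups then follows from part~(i).

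For part~(iii), I again project onto each summand. The factorization through $j\vee \bigvee \Sigma^{-1}\ell_{TC}(i)$ is equivalent to vanishing of the projection $\Sigma\bCPimbar\to \Sigma j'$, which after a non-canonical identification $\Sigma j'\simeq \Sigma j$ becomes the Hom-set calculation $[\bCPimbar,j]$; I would decompose $j$ via the defining cofiber sequence $\Sigma^{-1}\ell\to \Sigma^{(2p-2)-1}\ell\to j\to \ell$ and exploit the structure of $\bCPimbar$ as a wedge summand of $(\bCPim)\phat$ to control $[\bCPimbar,\Sigma^{q}\ell]$ in the relevant range. For the $p$-torsion vanishing statement, each $\Sigma^{-1}\ell_{TC}(i)\simeq \Sigma^{2i-1}\ell$ has torsion-free homotopy, so the torsion image lies entirely in the $j$ summand, and the claim reduces to vanishing of the induced map $\ptor(\pi_{*}(\Sigma\bCPimbar))\to \ptor(\pi_{*}j)$.

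The main obstacle will be part~(iii), particularly the $p$-torsion vanishing. This requires reconciling Rognes's transfer-based splitting of $TC(\bS)\phat$ with the canonical ring-theoretic splitting of $TC(\bZ)\phat$, likely by comparing the image-of-$J$ contributions to the two spectra and isolating the part coming from the $\bS\phat$ summand of $TC(\bS)\phat$ from the part coming from $\bCPimbar$. The Hom-set computations here are more delicate than those for parts~(i) and~(ii) since $\bCPimbar$ has richer cell structure than $\bS\phat$, and the $p$-torsion claim does not reduce to a purely connectivity-based vanishing statement.
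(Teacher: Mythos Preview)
Your treatment of parts~(i) and~(ii) is correct and essentially matches the paper's argument. The paper phrases~(ii) slightly more efficiently---rather than projecting onto each summand separately, it observes that $[\Sigma\bS\phat,TC(\bZ)\phat]\iso\pi_{1}TC(\bZ)\phat$ and that the inclusion $\Sigma j'\hookrightarrow TC(\bZ)\phat$ is an isomorphism on $\pi_{1}$, so the factorization is automatic---but your summand-by-summand vanishing check is equivalent. For the $\pi_{1}$-isomorphism, the paper invokes the fact that $TC(\bS)\phat\to TC(\bZ)\phat$ is a $(2p-4)$-equivalence (from the $(2p-3)$-connectivity of $\bS\phat\to H\bZp$) rather than citing Hesselholt--Madsen directly, but either route works.

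Part~(iii) is where there is a genuine gap. You have correctly identified the two reductions: the factorization amounts to $[\Sigma\bCPimbar,\Sigma j']=0$, and the torsion statement reduces (since $\pi_{*}\ell$ is torsion-free) to showing that $\ptor(\pi_{*}\Sigma\bCPimbar)\to\pi_{*}j$ vanishes. But your proposed method---decomposing $j$ via the cofiber sequence $\Sigma^{-1}\ell\to\Sigma^{2p-3}\ell\to j\to\ell$ and controlling $[\bCPimbar,\Sigma^{q}\ell]$---is unlikely to succeed directly, because the $\ell$-cohomology of $\bCPimbar$ is not small and there is no obvious connectivity or sparseness argument available.

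The paper's key move, which you are missing, is to pass to $K(1)$-localization. For the factorization, one uses the cofiber sequence $\Sigma^{-1}\bS\phat\to\Sigma\bCPimbar\to\Sigma(\Sigma^{\infty}\bCPi)\phat\to\bS\phat$ to reduce to showing $[(\Sigma^{\infty}\bCPi)\phat,j']=0$; since $j'$ is $K(1)$-local this equals $[L_{K(1)}\Sigma^{\infty}\bCPi,L_{K(1)}j']$, and Ravenel's identification of $L_{K(1)}\Sigma^{\infty}\bCPi$ as a wedge of copies of $KU\phat$ kills it (there are no essential maps $KU\phat\to J$). For the torsion vanishing, the map $\Sigma\bCPimbar\to j\to J$ factors through $L_{K(1)}\Sigma\bCPimbar$, and the same Ravenel input yields a cofiber sequence $\Sigma^{-1}J\to L_{K(1)}\Sigma\bCPimbar\to\bigvee\Sigma KU\phat\to J$; one then checks that torsion in $\pi_{*}\Sigma\bCPimbar$ can only land in even degrees of $L_{K(1)}\Sigma\bCPimbar$, where $\pi_{*}J$ vanishes. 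So the missing ingredient is not a delicate comparison of image-of-$J$ contributions as you suggest, but rather the single structural fact about $L_{K(1)}\Sigma^{\infty}\bCPi$.
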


\begin{proof}
The statement (i) is clear from the construction of the map $j\to
TC(\bZ)\phat$ since the linearization map is a map of ring spectra.
For (ii), the composite map $\Sigma \bS\phat\to TC(\bZ)\phat$ is
determined by where the generator goes in $\pi_{1}TC(\bZ)\phat$, but
the inclusion of $\Sigma j'$ in $TC(\bZ)\phat$ induces an isomorphism
on $\pi_{1}$, and so $\Sigma \bS\phat$ factors through $\Sigma j'$.
The map $TC(\bS)\phat\to TC(\bZ)\phat$ is a $(2p-3)$-equivalence~\cite[9.10]{BokstedtMadsen};
the map $\Sigma \bS\to \Sigma j'$ is therefore an isomorphism on
$\pi_{1}$.  Using the image of the generator of $\pi_{1}\Sigma\bS$
as a generator for $\pi_{1}\Sigma j'$ gives a weak equivalence $\Sigma
j\to \Sigma j'$ such that the map $\Sigma \bS\to \Sigma j'\to \Sigma
j$ obtained by composing with the inverse is the suspension of the
canonical map $\bS\to j$.    This completes the proof of (ii).

To show the factorization of $\fubar\to TC(\bZ)\phat$ for
(iii), it suffices 
to check that the composite map
\[
\fubar \to TC(\bZ)\phat\to \Sigma j'
\]
is trivial. Using the non-canonical weak equivalence $\fubar\simeq
\Sigma \bCPimbar$, the Atiyah-Hirzebruch spectral sequence to calculate
$(\Sigma j)^{*}(\Sigma \bCPimbar)$ with 
\[
E^{s,t}_{2}=H^{s+t}(\Sigma \bCPimbar;\pi_{-t}(\Sigma j'))
\iso H^{s+t}(\bCPimbar;\pi_{-t}(j))
\]
is zero along the line
of total degree zero, and so $[\fubar,\Sigma j']=0$.

Finally, to see that the map $\fubar\to TC(\bZ)\phat$ is
zero on the torsion subgroup of $\pi_{*}\fubar$, 
it suffices to note that the composite map 
\[
\Sigma \bCPimbar\simeq \fubar\to TC(\bZ)\phat\to j
\]
is zero on the torsion subgroup of
$\pi_{*}\Sigma \bCPimbar$, or equivalently that the composite map to $J$ is
zero on the torsion subgroup of $\pi_{*}\Sigma \bCPimbar$.
Post-composing with the (canonical) map $J\to L$ in the fiber sequence
\[
\Omega L\to J\to L\to L,
\]
the map $\Sigma
\bCPimbar\to L$ is trivial (since $L^{*}(\Sigma \bCPimbar)$ is
concentrated in odd degrees).  It follows that the map $\Sigma
\bCPimbar \to J$ factors through the map $\Omega L\to
J$, and therefore is zero on torsion.
\end{proof}

It would be reasonable to expect that the augmentations
$TC(\bS)\phat\to \bS\phat$ and $TC(\bZ)\phat \to j$ are compatible,
although we see no $K$-theoretic, $THH$-theoretic, or calculational
reasons why this should hold.  Such a compatibility would imply that
the map $\fubar\to TC(\bZ)\phat$ factors through $\bigvee
\Sigma^{-1} \ell_{TC}(i)$ and would then (combined with the observations in
Section~\ref{sec:splitting}) say that the linearization map is
fully diagonal with respect to the splittings of the previous
section. 

\section{Proof of main results}\label{sec:proof}

We now apply the work of the previous two sections to prove the
theorems stated in the introduction.  We begin with
Theorem~\ref{thm:mainles}, which is an immediate consequence of the
following theorem.

\begin{thm}\label{thm:surj}
The map
\[
\pi_{n}TC(\bS)\phat \oplus \pi_{n}K(\bZ)\phat
\to \pi_{n}TC(\bZ)\phat
\]
is (non-canonically) split surjective.
\end{thm}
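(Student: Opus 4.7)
The plan is to exploit the diagonal structure of the cyclotomic trace and linearization maps established in Theorems~\ref{thm:tracediag} and~\ref{thm:lindiag}, and verify surjectivity summand by summand with respect to the canonical splitting~\eqref{eq:TCZmain},
\[
TC(\bZ)\phat \simeq j \vee \Sigma j' \vee \bigvee_{i=0}^{p-2} \Sigma^{-1}\ell_{TC}(i).
\]
Because $\pi_n \Sigma^{-1}\ell_{TC}(i)$ is either $\bZp$ or zero in each degree, any surjection onto such a summand automatically splits at the abelian-group level; splittings over the $j$ and $\Sigma j'$ summands will be produced directly from the maps described in Section~\ref{sec:maps}.

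The $j$ summand is hit by the identity $j \to j$ inside the cyclotomic trace $K(\bZ)\phat \to TC(\bZ)\phat$ (Theorem~\ref{thm:tracediag}), and the $\Sigma j'$ summand is hit by the component $\Sigma\bS\phat \to \Sigma j'$ of the linearization, which Theorem~\ref{thm:lindiag}(ii) asserts is a split surjection on homotopy in every degree. These cases are thus immediate.

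For the summands $\Sigma^{-1}\ell_{TC}(q)$ with $q \neq 0$, Theorem~\ref{thm:tracediag} identifies them as targets of maps $y_i \to \Sigma^{-1}\ell_{TC}(q_i)$ from $K(\bZ)\phat$. To show each such map is surjective on $\pi_*$, I would combine the description of $y_i$ as the $4$-connected cover of the $E(1)_*$-Moore spectrum $Y_i$ (via the Dwyer-Mitchell analysis reviewed in Section~\ref{sec:review}) with the Hesselholt-Madsen identification of the matching summand of $TC(\bZ)\phat$. Under this match, the cyclotomic-trace component $Y_i \to L_{K(1)}\Sigma^{-1}\ell_{TC}(q_i)$ is a $K(1)$-equivalence (a consequence of the Quillen-Lichtenbaum theorem together with the comparison between $K$-theory and $TC$ of $\bZ$), so passing to $4$-connected covers yields a weak equivalence $y_i \to \Sigma^{-1}\ell_{TC}(q_i)$ and in particular surjectivity on homotopy.

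The essential new case is $\Sigma^{-1}\ell_{TC}(0)$: by~\eqref{eq:kvanish} $y_{p-2} \simeq *$, so $K(\bZ)\phat$ contributes nothing here and surjectivity must be supplied by $TC(\bS)\phat$. By Theorem~\ref{thm:lindiag}(iii) the relevant map is the component $\Sigma\bCPimbar \to TC(\bZ)\phat$, and the plan is to check surjectivity at the bottom of the periodic pattern: a generator of $\pi_{-1}TC(\bZ)\phat \iso \bZp$ (identified in \cite[Th.~D]{HM1}, \cite[2.5.7]{Madsen-Traces}) should be produced by the class in $\pi_{-1}\Sigma\bCPimbar = \pi_{-2}\bCPimbar \iso \bZp$ coming from the bottom cell of $\bCPim$, and compatibility of the map with multiplication by $v_1$ will then transport surjectivity to every degree $n \equiv -1 \pmod{2(p-1)}$ in which the target is non-zero. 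The main obstacle will be this bottom-cell calculation together with verifying the requisite $v_1$-compatibility, equivalently that the component of the linearization landing in $\Sigma^{-1}\ell_{TC}(0)$ is suitably $\ell$-linear; this is the one case in which the diagonal-splitting machinery does not immediately reduce the problem to a statement already proved in Section~\ref{sec:maps}.
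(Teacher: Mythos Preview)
Your treatment of the $j$ and $\Sigma j'$ summands is fine and matches the paper. The gap is in your handling of the summands $\Sigma^{-1}\ell_{TC}(q)$ for $q\neq 0$. You assert that the component $y_{i}\to \Sigma^{-1}\ell_{TC}(q_{i})$ of the cyclotomic trace is a weak equivalence for each $i$, appealing to ``Quillen--Lichtenbaum together with the comparison between $K$-theory and $TC$ of $\bZ$''. But the cyclotomic trace $K(\bZ)\phat\to TC(\bZ)\phat$ is \emph{not} a $K(1)$-equivalence, and the summand-by-summand claim is simply false. Already at regular primes, $y_{i}\simeq *$ for every odd $i$ with $1\leq i\leq p-4$ (Section~\ref{sec:review}), while the corresponding target $\Sigma^{-1}\ell_{TC}(i+1)\simeq \Sigma^{2i+1}\ell$ has $\pi_{*}\iso\bZp$ in infinitely many degrees; so the map on those summands is zero, not surjective. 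At irregular primes the even-indexed $y_{i}$ can also fail to match. The only summand for which your argument actually goes through is $y_{0}\to\Sigma^{-1}\ell_{TC}(p)$, and that is precisely Lemma~\ref{lem:y0}.

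The paper's proof therefore allocates the summands differently: $y_{0}$ is used only for $\Sigma^{-1}\ell_{TC}(p)$, and \emph{all} of the remaining summands $\Sigma^{-1}\ell_{TC}(0),\Sigma^{-1}\ell_{TC}(2),\ldots,\Sigma^{-1}\ell_{TC}(p-2)$ are hit from $TC(\bS)\phat$ via $\Sigma\bCPimbar$, using the Klein--Rognes computation (Lemma~\ref{lem:klsurj}) that the composite $SU\phat\to\Omega^{\infty}(\Sigma\bCPim)\phat\to\Omega^{\infty}TC(\bZ)\phat\to SU\phat$ is an isomorphism on $\pi_{2q+1}$ for $q\not\equiv 0\pmod{p-1}$. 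Your bottom-cell-plus-$v_{1}$ sketch for $\Sigma^{-1}\ell_{TC}(0)$ is in the right spirit but far short of what is needed: you must cover every $q\neq p$, not just $q=0$, and the actual argument requires the specific Klein--Rognes map rather than a generic $v_{1}$-periodicity claim.
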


We apply the splittings of $TC(\bS)\phat$, $TC(\bZ)\phat$, and
$K(\bZ)\phat$ and the maps on homotopy groups to prove the previous
theorem by breaking it into pieces and showing that different pieces
in the splitting induce surjections on homotopy groups.  
Theorem~\ref{thm:lindiag}.(i) and (ii) provide two surjection results;
we write two more in Lemmas~\ref{lem:klsurj} and~\ref{lem:y0}. The
first of these is essentially due to Klein-Rognes~\cite{KleinRognes}.

\begin{lem}\label{lem:klsurj}
Under the splittings~\eqref{eq:TCS}, \eqref{eq:fubar},
and~\eqref{eq:TCZmain}, the composite map
\[
\fubar\to TC(\bS)\phat\to TC(\bZ)\phat\to
\Sigma^{-1}\ell_{TC}(0)\vee \cdots \vee \Sigma^{-1}\ell_{TC}(p-2) 
\]
induces a split surjection on
$\pi_{2q+1}$ for $q\not\equiv 0\mod{(p-1)}$
\end{lem}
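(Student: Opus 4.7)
The plan is to reduce the lemma to a surjectivity claim onto a single free rank-one $\bZp$-module and then invoke the Klein-Rognes computation of this composite.

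First, I would identify the target in degree $2q+1$. By the non-canonical weak equivalence $\Sigma^{-1}\ell_{TC}(k)\simeq \Sigma^{2k-1}\ell$ and the standard description of $\pi_{*}\ell$, the group $\pi_{2q+1}\Sigma^{-1}\ell_{TC}(k)\iso \pi_{2q-2k+2}\ell$ is a copy of $\bZp$ precisely when $q-k+1\geq 0$ and $q-k+1\equiv 0\pmod{p-1}$, and is trivial otherwise. For $q\geq 1$ with $q\not\equiv 0\pmod{p-1}$ the congruence singles out a unique $k\in\{0,2,3,\ldots,p-2\}$; hence $\pi_{2q+1}\bigl(\bigvee_{i}\Sigma^{-1}\ell_{TC}(i)\bigr)$ is either trivial (nothing to prove) or a single copy of $\bZp$. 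Since $\bZp$ is free over itself, any surjection onto it automatically splits, so it is enough to show that the composite is surjective on $\pi_{2q+1}$.

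Next, I would use Theorem~\ref{thm:lindiag}(iii) to confirm that projecting the composite to $\bigvee_{i}\Sigma^{-1}\ell_{TC}(i)$ loses no information relevant to the target: the map $\Sigma\bCPimbar\to TC(\bZ)\phat$ factors through $j\vee\bigvee_{i}\Sigma^{-1}\ell_{TC}(i)$, and the $j$-component is immaterial for surjectivity onto the $\Sigma^{-1}\ell_{TC}$-part. In particular, no obstruction can arise from $\Sigma j'$ or from the other summands of $TC(\bZ)\phat$ described in \eqref{eq:TCZmain}.

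Finally, I would invoke the analysis of Klein-Rognes~\cite{KleinRognes}. In the course of comparing $\pi_{*}K(\bS)$ and $\pi_{*}K(\bZ)$ on mod-torsion homotopy, they study essentially this composite using the Hesselholt-Madsen identification of $\pi_{*}TC(\bZ)\phat$ in terms of cyclotomic units, and they show that the canonical complex line bundle classes in $\pi_{2q}\bCPim$ map to generators of the rank-one $\bZp$-summand of $\pi_{2q+1}TC(\bZ)\phat$ in each of the relevant degrees. Passing from $(\bCPim)\phat$ to $\bCPimbar$ via the canonical splitting $(\bCPim)\phat\simeq \bS\phat\vee\bCPimbar$ preserves this surjection, since the $\bS\phat$-wedge-summand contributes by Theorem~\ref{thm:lindiag}(ii) only to the $\Sigma j'$-part of $TC(\bZ)\phat$ (and, through the unit, to $j$), not to the $\Sigma^{-1}\ell_{TC}$-summands.

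The main obstacle is this last step: although the calculation is essentially contained in Klein-Rognes, one must carefully match the several splitting conventions involved --- the BHM splitting of $TC(\bS)\phat$, the Hesselholt-Madsen cyclotomic-unit description of the $\ell_{TC}$-summands, and the Klein-Rognes identification of the cyclotomic trace on $\bCPim$ --- to confirm that the image is not merely non-zero in the relevant degrees but actually generates the rank-one $\bZp$.
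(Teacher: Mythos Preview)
Your approach is essentially the paper's: both reduce to the Klein--Rognes computation. The paper, however, packages the citation more efficiently. Rather than arguing about line-bundle classes hitting generators, it invokes the explicit space-level map $SU\phat\to \Omega^{\infty}(\Sigma\bCPim)\phat$ constructed by Klein--Rognes (and independently Madsen--Schlichtkrull) and their result \cite[6.3(i)]{KleinRognes} that the round-trip composite
\[
SU\phat\to \Omega^{\infty}(\Sigma\bCPim)\phat \to \Omega^{\infty}TC(\bZ)\phat \to SU\phat
\]
is an isomorphism on $\pi_{2q+1}$ for $q\not\equiv 0\pmod{p-1}$. This directly exhibits the splitting and sidesteps the convention-matching you flag as an obstacle; your preliminary identification of the target as a single copy of $\bZp$ is then unnecessary. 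The passage from $\bCPim$ to $\bCPimbar$ is handled implicitly: for $q>0$ the $\bS\phat$-summand contributes only torsion to $\pi_{2q+1}\Sigma(\bCPim)\phat$, which dies in the torsion-free target.

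There is one genuine gap in your write-up: you restrict to $q\geq 1$, but the lemma also covers $q=-1$ (since $-1\not\equiv 0\pmod{p-1}$ for odd $p$), where the target $\pi_{-1}\Sigma^{-1}\ell_{TC}(0)\iso\bZp$ is nontrivial. The $SU$-map argument does not apply there; the paper handles this degree separately by noting that the linearization map $TC(\bS)\phat\to TC(\bZ)\phat$ is a $(2p-4)$-equivalence.
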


\begin{proof}
Klein and Rognes~\cite[5.8,(17)]{KleinRognes} (and independently
Madsen and Schlichtkrull~\cite[1.1]{MadsenSchlitkrull}) construct a
space-level map
\[
SU\phat\to \Omega^{\infty}(\Sigma \bCPim)\phat.
\]
They study the composite map 
\begin{equation}\label{eq:lprime}
SU\phat\to \Omega^{\infty}(\Sigma \bCPim)\phat \to
\Omega^{\infty}(TC(\bZ)\phat) \to
SU\phat
\end{equation}
induced by the linearization map $TC(\bS)\phat\to TC(\bZ)\phat$, the
projection map 
\[
TC(\bZ)\phat[0,\infty)\to \Sigma^{3}ku\phat, 
\]
and the Bott periodicity isomorphism $\Omega^{\infty}\Sigma^{3}ku\simeq
SU$.  In~\cite[6.3.(i)]{KleinRognes}, Klein and Rognes show that their
map~\eqref{eq:lprime}
induces an isomorphism of homotopy groups in all degrees except
those congruent to $1$ mod $2(p-1)$.  This proves the statement except
in degree $-1$, where it follows from the fact that the linearization
map $TC(\bS)\phat\to TC(\bZ)\phat$ is a $(2p-3)$-equivalence~\cite[9.10]{BokstedtMadsen}.
\end{proof}

The other lemma constructs a split surjection onto
$\pi_{*}TC(\bZ)\phat$ in degrees congruent to $1$ mod $2(p-1)$. 
We give a direct argument for the following lemma, but it can also be
proved using the vanishing results in~\cite{BM-Anil}.

\begin{lem}\label{lem:y0}
Under the splittings of~\eqref{eq:TCZmain} and~\eqref{eq:KZmain}, the
composite map
\[
y_{1}\to K(\bZ)\phat\to TC(\bZ)\phat\to \Sigma^{-1}\ell_{TC}(p)
\]
is a weak equivalence.
\end{lem}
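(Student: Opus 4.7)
The plan is to reduce the claim to the assertion that the composite induces an isomorphism on $\pi_{2p-1}$, and then verify this via standard input on the cyclotomic trace for $\bZ$. By the discussion in Section~\ref{sec:review}, both $y_0$ and $\Sigma^{-1}\ell_{TC}(p)$ are non-canonically weakly equivalent to $\Sigma^{2p-1}\ell$: each is a $(2p-2)$-connected spectrum with $\pi_n\iso\bZp$ for $n=2p-1+2k(p-1)$, $k\geq 0$, and zero in other degrees, and each is the $4$-connected cover of its $K(1)$-localization, which is (non-canonically) $\Sigma^{2p-1}L$.

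Since the cover-to-$K(1)$-local map induces an isomorphism on $\pi_{2p-1}$ (valid because $2p-1\geq 5$ for $p\geq 3$), the composite of the lemma is a weak equivalence iff its $K(1)$-localization is; and since both $K(1)$-localizations are rank-one free modules over the Adams summand $L$, this holds iff the induced map on $\pi_{2p-1}$ is a $p$-adic unit. I next identify both $\pi_{2p-1}K(\bZ)\phat$ and $\pi_{2p-1}TC(\bZ)\phat$ using the splittings~\eqref{eq:TCZmain} and~\eqref{eq:KZmain}. All other summands vanish in this degree: since $2p-1\equiv 1\pmod{2(p-1)}$ while the image-of-$J$ contributions $\pi_n j$ and $\pi_n j'$ for $n>0$ are concentrated in degrees $\equiv -1\pmod{2(p-1)}$, we have $\pi_{2p-1}j=0$ and $\pi_{2p-1}\Sigma j'=\pi_{2p-2}j'=0$; each $y_i$ with $i\neq 0$ has homotopy concentrated in degrees $\equiv 2i, 2i+1\pmod{2(p-1)}$, which excludes $1\pmod{2(p-1)}$; and each $\Sigma^{-1}\ell_{TC}(i)$ with $i\in\{0,2,\ldots,p-2\}$ has $\pi_{2p-1}=\pi_{2(p-i)}\ell=0$ since $p-i$ is not a positive multiple of $p-1$ in this range. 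Thus the composite on $\pi_{2p-1}$ is the cyclotomic trace $K_{2p-1}(\bZ)\phat\to TC_{2p-1}(\bZ)\phat$, a map between two copies of $\bZp$.

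The main obstacle is showing that this cyclotomic trace is a $p$-adic unit on $\pi_{2p-1}$. I would invoke the Hesselholt-Madsen theorem that the $p$-completed cyclotomic trace $K(\bZp)\phat\to TC(\bZp)\phat$ is a weak equivalence on connective covers, combined with the canonical equivalence $TC(\bZ)\phat\simeq TC(\bZp)\phat$ (since $p$-completed $TC$ only depends on the $p$-completion of the base ring) and the Quillen-Lichtenbaum isomorphism $K_{2p-1}(\bZ)\phat\iso K_{2p-1}(\bZp)\phat$ (which holds in this range at odd primes). As the authors remark, the vanishing results of~\cite{bm-Anil} provide an alternative, more self-contained derivation.
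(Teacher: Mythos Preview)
Your reduction is sound and close in spirit to the paper's own argument: both $y_0$ and $\Sigma^{-1}\ell_{TC}(p)$ are non-canonically $\Sigma^{2p-1}\ell$, so it suffices to show the induced self-map is a unit, and this is detected on the bottom homotopy group $\pi_{2p-1}$. Your check that all other summands of $K(\bZ)\phat$ and $TC(\bZ)\phat$ vanish in degree $2p-1$, so that the composite there coincides with the full cyclotomic trace, is also correct.

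The genuine gap is the final step. There is no ``Quillen--Lichtenbaum isomorphism $K_{2p-1}(\bZ)\phat\iso K_{2p-1}(\bZp)\phat$'': Quillen--Lichtenbaum compares $K(R)\phat$ to $L_{K(1)}K(R)$ for a fixed $R$, not $K(\bZ)$ to $K(\bZp)$. Worse, once you grant Hesselholt--Madsen and $TC(\bZ)\phat\simeq TC(\bZp)\phat$, the claim that the completion map $K(\bZ)\phat\to K(\bZp)\phat$ is an isomorphism on $\pi_{2p-1}$ is \emph{equivalent} to the claim that the cyclotomic trace $K(\bZ)\phat\to TC(\bZ)\phat$ is---so the argument is circular as written. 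The paper supplies the missing arithmetic input by working instead at $\pi_1$ of the $K(1)$-localizations, where both sides admit explicit identifications: $\pi_1 L_{K(1)}K(\bZ)\cong(\bZ[1/p]^\times)\phat$ and $\pi_1 L_{K(1)}TC(\bZ)\cong(\bQpt)\phat$, with the $\Sigma j'$ summand contributing exactly $(\bZpt)\phat$. Projection to the $\ell_{TC}(p)$ summand is then passage to $(\bQpt/\bZpt)\phat$, and the composite from $(\bZ[1/p]^\times)\phat$ sends $p\mapsto p$, visibly an isomorphism. Your fallback to \cite{bm-Anil} is legitimate in principle---the paper itself notes it gives an alternative route---but as stated it is a citation, not a proof.
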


\begin{proof}
Since $y_{1}$ and $\Sigma^{-1}\ell_{TC}(p)$ are both (non-canonically)
weakly equivalent to $\Sigma^{2p-1}\ell$, it suffices to show that the
map becomes a weak equivalence after $K(1)$-localization.  Indeed, 
by $v_{1}$ periodicity, it
suffices to show that the map on $K(1)$-localizations is an
isomorphism on any homotopy group in degree congruent to $1$ mod
$(2p-2)$.  We check this on $\pi_{1}$.  Consider the commutative
diagram
\[
\xymatrix@C-2pc@R-1pc{%
&L_{K(1)}K(\bZ[\tfrac1p])\ar[rr]
&&L_{K(1)}K(\bQ\phat)\\
L_{K(1)}K(\bZ)\ar[d]\ar[rr]\ar[ur]^-{\simeq}
&&L_{K(1)}K(\bZ\phat)\ar[d]^{\simeq}\ar[ur]_-{\simeq}\\
L_{K(1)}TC(\bZ)\ar[rr]_{\simeq}
&&L_{K(1)}TC(\bZ\phat)
}
\]
where the vertical maps are induced by the cyclotomic trace and the
indicated maps are weak equivalences, for the diagonal maps by Quillen's
localization sequence, for the indicated horizontal map by~\cite[Add.~6.2]{HM2}, and
for the indicated vertical map by~\cite[Th.~D]{HM2}.  On
$\pi_{1}$, the canonical maps
\begin{gather*}
\bZ[\tfrac1p]^{\times}\to \pi_{1}(K(\bZ[\tfrac1p]))\to \pi_{1}(L_{K(1)}K(\bZ[\tfrac1p]))\\
(\bQp)^{\times}\to \pi_{1}(K(\bQp))\to \pi_{1}(L_{K(1)}K(\bQp))
\end{gather*}
induce isomorphims
\begin{gather*}
(\bZ[\tfrac1p]^{\times})\phat\iso \pi_{1}(L_{K(1)}K(\bZ))\iso
\pi_{1}(Y_{1})\\
((\bQ\phat)^{\times})\phat\iso \pi_{1}(L_{K(1)}TC(\bZ))\iso
\pi_{1}(L_{K(1)}\Sigma j')\oplus \pi_{1}(L_{K(1)}\Sigma^{-1}\ell_{TC}(p)),
\end{gather*}
under which the map $\pi_{1}(Y_{1})\to \pi_{1}(L_{K(1)}TC(\bZ\phat))$
corresponds to the map induced by the inclusion of $\bZ[\tfrac1p]$ in
$\bQp$. 
By construction, the map $\Sigma j'\to TC(\bZ)\phat$ sends $\pi_{1}(j')$
isomorphically onto the subgroup $((\bZp)^{\times})\phat$ of
$((\bQ\phat)^{\times})\phat$ under the isomorphism above.  The map 
\[
(\bZ[\tfrac1p]^{\times})\phat\to ((\bQ\phat)^{\times})\phat/((\bZp)^{\times})\phat
\]
is an isomorphism of free $\bZp$-modules of rank 1.
\end{proof}

We now have everything we need for the proof of
Theorem~\ref{thm:surj}.

\begin{proof}[Proof of Theorem~\ref{thm:surj}]
Combining previous results, we have two families of (non-canonical)
splittings
\begin{multline*}
\pi_{*}(\bS\vee\Sigma \bS\vee \fubar\vee j\vee
y_{0}\vee\cdots \vee y_{p-2})\\
\to \pi_{*}(j\vee \Sigma j'\vee \Sigma^{-1}\ell_{TC}(0)\vee\cdots \vee \Sigma^{-1}\ell_{TC}(p-2).
\end{multline*}
For both splittings we use Lemma~\ref{lem:y0} to split the
$\Sigma^{-1}\ell_{TC}(p)$ summand in the codomain (canonically) using the $y_{1}$
summand of the domain, we use Theorem~\ref{thm:lindiag}.(ii) to split
the $\pi_{*}\Sigma j'$ summand in the codomain (canonically) using the
$\Sigma \bS^{1}$ summand in the domain, and we use
Lemma~\ref{lem:klsurj} to split the
\[
\pi_{*}(\Sigma^{-1}\ell_{TC}(0)\vee \Sigma^{-1}\ell_{TC}(2)\vee\cdots \vee \Sigma^{-1}\ell_{TC}(p-2))
\]
summands in the codomain (non-canonically) using the $\pi_{*}\fubar$ summand in the domain.  We then have a choice on the
remaining summand of the codomain, $\pi_{*}j$.  We can use
Theorem~\ref{thm:tracediag} to split this (canonically) using the
$\pi_{*}j$ summand in the domain or use Theorem~\ref{thm:lindiag} to
split this (canonically) using the $\pi_{*}\bS$ summand in the domain.
\end{proof}

This completes the proof of Theorem~\ref{thm:mainles}.  We now prove
the remaining theorems from the introduction.

\begin{proof}[Proof of Theorems~\ref{thm:main} and~\ref{thm:subgp}]
Since the long exact sequence of the homotopy cartesian
linearization/cyclotomic trace square breaks into split short exact
sequences, we get split short exact sequences on $p$-torsion subgroups
\[
0\sto \ptor(\pi_{n}K(\bS))\to 
\ptor(\pi_{n}TC(\bS)\phat\oplus \pi_{n}K(\bZ))\to
\ptor(\pi_{n}TC(\bZ)\phat)\to 0.
\]
Using the splittings of~\eqref{eq:TCS}, \eqref{eq:TCZmain},
and~\eqref{eq:KZ}, leaving out the non-torsion summands, we can
identify $\ptor(\pi_{n}K(\bS))$ as the kernel of a map 
\[
\ptor(\pi_{n}\bS\oplus \pi_{n-1}\bS\oplus \pi_{n}\fubar \oplus
\pi_{n}j \oplus \pi_{n}\tK(\bZ))
\to \ptor(\pi_{n}j\oplus \pi_{n-1}j')
\]
which by Theorems~\ref{thm:tracediag} and~\ref{thm:lindiag} is mostly
diagonal: It is the direct sum of the canonical maps 
\begin{align*}
\ptor(\pi_{n}\bS)\oplus \ptor(\pi_{n}j)&\to \ptor(\pi_{n}j)\\
\ptor(\pi_{n-1}\bS)&\to \ptor(\pi_{n-1}j')
\end{align*}
and the zero maps on $\ptor(\pi_{n}\fubar)$ and
$\ptor(\pi_{n}\tK(\bZ))$. 
The isomorphism~\eqref{a} uses the
canonical splitting $\pi_{n}j\to \pi_{n}\bS$ on the $\pi_{n}\bS$
summand, while the isomorphism~\eqref{b} uses the identity of
$\pi_{n}j$ on the $\pi_{n}j$ summand.
\end{proof}

\section{Conjecture on Adams Operations}\label{sec:splitting}

In Section~\ref{sec:review} we produced canonical splittings on
$K(\bZ)\phat$ and $TC(\bZ)\phat$ and in Section~\ref{sec:maps}, we
showed that the cyclotomic trace is diagonal with respect to these
splittings.  The purpose of this section is to prove the following
splitting of the linearization/cyclotomic trace square and relate it
to conjectures on Adams operations.

\begin{thm}\label{thm:eigensquare}
The spectrum $K(\bS)\phat$ splits into $p-1$ summands,
$K(\bS)\phat\simeq \aK_{0}\vee\cdots \vee\aK_{p-2}$, and the
linearization/cyclotomic trace square splits into the wedge sum of
$p-1$ homotopy cartesian squares 
\[
\xymatrix{%
\aK_{0}\ar[r]\ar[d]&j\ar[d]
&\aK_{1}\ar[r]\ar[d]&y_{1}\ar[d]\\
\bS\phat\vee \Sigma \bCPim[-1]\ar[r]&j\vee \Sigma^{-1}\ell_{TC}(0)
&\Sigma \bCPim[0]\ar[r]&\Sigma j\vee \Sigma^{-1}\ell_{TC}(p)
}
\]
for $i=0,1$, and
\[
\xymatrix{%
\aK_{i}\ar[r]\ar[d]&y_{i}\ar[d]\\
\Sigma \bCPim[i-1]\ar[r]&\Sigma^{-1}\ell_{TC}(i)
}
\]
for $i=2,\ldots p-2$.
\end{thm}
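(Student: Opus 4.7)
The plan is to deduce the theorem from a Teichm\"uller character eigensplitting of $TC(\bS)\phat$ and $TC(\bZ)\phat$ induced by Goodwillie's Adams operations on $TC$ \cite[10.7]{GoodTHH}, combined with the diagonality of the linearization and cyclotomic trace with respect to these splittings, and then to define the $\aK_i$ as the homotopy pullbacks of the resulting $p-1$ character-indexed commutative squares.

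First, I would use Goodwillie's Adams operations to produce a natural action of $\bZp[(\bZ/p)^{\times}]$ in the stable category on $TC(\bS)\phat$ and $TC(\bZ)\phat$, yielding eigensplittings into $p-1$ summands indexed by powers $\omega^{0},\ldots,\omega^{p-2}$ of the Teichm\"uller character. Using the wedge decomposition $\bCPim \simeq \bCPim[-1]\vee\cdots\vee\bCPim[p-3]$ of \eqref{eq:cpimsplit}, where $\bCPim[i]$ has cohomology concentrated in degrees $\equiv 2i\pmod{2(p-1)}$, I would identify the $\omega^i$-summand of $TC(\bS)\phat$ as $\bS\phat\vee\Sigma\bCPim[-1]$ when $i=0$ and as $\Sigma\bCPim[i-1]$ for $i=1,\ldots,p-2$. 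For $TC(\bZ)\phat$ I would match the character pieces to the summands of \eqref{eq:TCZmain} by computing residue classes mod $2(p-1)$ of the homotopy groups of each $\ell_{TC}$-summand and of $j$, $\Sigma j'$; this yields $j \vee \Sigma^{-1}\ell_{TC}(0)$ for the trivial character, $\Sigma j' \vee \Sigma^{-1}\ell_{TC}(p)$ for $\omega^{1}$, and $\Sigma^{-1}\ell_{TC}(i)$ for $\omega^{i}$ when $i\geq 2$.

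Next, I would verify that the linearization and cyclotomic trace respect these eigensplittings. In principle naturality of Goodwillie's Adams operations gives diagonality of the linearization $TC(\bS)\phat\to TC(\bZ)\phat$, but an independent Atiyah--Hirzebruch spectral sequence argument is cleaner and avoids any subtlety in the construction: a map from $\Sigma\bCPim[i-1]$ into a summand of $TC(\bZ)\phat$ whose homotopy sits in a residue class mod $2(p-1)$ incompatible with the cohomological degrees of $\bCPim[i-1]$ must vanish; combined with the $\bS\to j$ and $\Sigma\bS\to\Sigma j'$ identifications of Theorem~\ref{thm:lindiag}, this forces complete diagonality. For the cyclotomic trace $K(\bZ)\phat\to TC(\bZ)\phat$, Theorem~\ref{thm:tracediag} already supplies the diagonalization relative to the splitting \eqref{eq:KZmain}, with each $y_i$ (and $j$) mapping into a single character summand of $TC(\bZ)\phat$ of matching character class.

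Finally, I would define $\aK_i$ as the homotopy pullback of the $\omega^i$-character square formed by the linearization map and the cyclotomic trace, and invoke Dundas's theorem~\cite{Dundas-RelK} that the $p$-completed linearization/cyclotomic trace square is homotopy cartesian to identify $\aK_0\vee\cdots\vee\aK_{p-2}$ with $K(\bS)\phat$, since homotopy pullbacks commute with finite wedges. The principal obstacle is bookkeeping the precise character assignments, especially in the exceptional $i=0$ and $i=1$ squares where the trivial and $\omega^{1}$ pieces inherit extra $j$ and $\Sigma j$ summands from the canonical splittings of Section~\ref{sec:review}; the resulting asymmetry between the two distinguished squares and the generic squares for $i\geq 2$ is what forces the non-uniform statement of the theorem and requires the most care to verify.
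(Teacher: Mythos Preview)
Your proposal is correct and follows essentially the same route as the paper: the paper likewise obtains the eigensplitting of $TC(\bS)\phat$ from Goodwillie's Adams operations \cite[10.7]{GoodTHH} and the decomposition~\eqref{eq:cpimsplit}, identifies the $TC(\bZ)\phat$ summands by residue class, invokes the Atiyah--Hirzebruch spectral sequence to force diagonality of the linearization map, and relies on Theorem~\ref{thm:tracediag} for the cyclotomic trace, then defines $\aK_i$ as the resulting homotopy pullbacks. The only cosmetic difference is that the paper's discussion frames the $TC(\bZ)\phat$ eigensplitting via the identification $L_{K(1)}TC(\bZ)\simeq L_{K(1)}K(\bQp)$ and Thomason's spectral sequence rather than by directly reading off residue classes of homotopy groups, but these amount to the same computation.
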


We are using the notation from Section~\ref{sec:review} for the
summands $y_{i}$ and $\Sigma^{-1}\ell_{TC}(i)$ of $K(\bZ)\phat$ and $TC(\bZ)\phat$.
The spectra $\bCPim[i]$ are the wedge summands of the ``Adams splitting'' 
previously used by Rognes~\cite[\S5]{Rognesp}
\begin{equation}\label{eq:cpimsplit}
\bCPim\simeq \bCPim[-1] \vee \bCPim[0]\vee \cdots \vee \bCPim[p-3].
\end{equation}
Here we are following the numbering of Rognes~\cite[p.~169]{Rognesp}, which
has its rationale in that the $[i]$ piece has its ordinary cohomology
concentrated in degrees $2i$ mod~${2p-2}$ and starts in degree $2i$.
Note that the splitting of the theorem fails to be canonical because
of problems with the identification of $TC(\bS)\phat$ as
$\bS\phat\vee \Sigma \bCPim$.

In the theorem, for the $i=0$ square, we have used that
$y_{0}=*$ as noted in Section~\ref{sec:review}.  In using these squares
to study the homotopy type of $\aK_{i}$, we can simplify the $i=0$
square to a cofiber sequence
\[
\aK_{0}\to
\bS\phat\vee \Sigma \bCPim[-1]\to \Sigma^{-1}\ell_{TC}(0) \to \Sigma \aK_{0}
\]
since Theorem~\ref{thm:tracediag} indicates that the map $j\to j\vee
\Sigma^{-1}\ell_{TC}(0)$ factors through the identity map $j\to j$.
For the $i=1$ square, the splitting of $\bCPim$ fits into a fiber
sequence with the splitting of 
\[
(\Sigma^{\infty}\bCPi)\phat\simeq \Sigma^{\infty}K(\bZp,2))\simeq
\bCPi[1]\vee \cdots \vee \bCPi[p-1],
\]
which identifies $\bCPim[0]$ as $\bS\phat\vee \bCPi[p-1]$.
Theorem~\ref{thm:tracediag} and Lemma~\ref{lem:y0}
indicate that the map $y_{1}\to \Sigma j\vee \Sigma^{-1}\ell_{TC}(p)$
factors through a weak equivalence $y_{1}\to \Sigma^{-1}\ell_{TC}(p)$, and we get
a weak equivalence 
\[
\aK_{1}\simeq \Sigma c\vee \bCPi[p-1],
\]
where by definition $c$ is the homotopy fiber of the canonical map
$\bS\to j$.

\begin{proof}[Proof of Theorem~\ref{thm:eigensquare}]
Let $\epsilon_{i}TC(\bS)\phat$, $\epsilon_{i}TC(\bZ)\phat$, and
$\epsilon_{i}K(\bZ)\phat$ be the summands of $TC(\bS)\phat$,
$TC(\bZ)\phat$, and $K(\bZ)\phat$, respectively, specified in the 
$i$th square in the statement of the theorem.  Our work in
Section~\ref{sec:maps} shows that the map $K(\bZ)\phat\to
TC(\bZ)\phat$ restricts to a sum of maps $\epsilon_{i}K(\bZ)\phat\to
\epsilon_{i}TC(\bZ)\phat$.  The Atiyah-Hirzebruch spectral sequence
implies that the $\bCPim[i-1]$ summand of
$\epsilon_{i}TC(\bS)\phat$ factors uniquely through
$\epsilon_{i}TC(\bZ)\phat$ as there are no essential maps to the other
summands; Theorem~\ref{thm:lindiag} then implies that the map
$TC(\bS)\phat\to TC(\bZ)\phat$ decomposes as a wedge sum of maps
$\epsilon_{i}TC(\bS)\phat\to \epsilon_{i}TC(\bZ)\phat$. 
\end{proof}

In the proof above, we argued calculationally using the paucity of
maps in the stable category between the summands; however, there is a
conceptual reason to expect much of this behavior based on
$p$-adically interpolated of Adams operations.
In~\cite[10.7]{GoodTHH}, we showed that the splitting of $\bCPimbar$
arises from a $p$-adic interpolation of the Adams operations on
$TC(\bS)\phat$ (constructed there).  Such a $p$-adic interpolation is
an extension to $\bZpt$ of the action of the monoid
$\bZ_{(p)}^{\times}\cap \bZ$ on $TC(-;p)$ (in the $p$-complete stable
category) acting by Adams operations.  Using the Teichm\"uller
character $\omega$ to embed $(\bZ/p)^{\times}$ in $\bZpt$, we then get
an action of the ring $\bZp[(\bZ/p)^{\times}]$, which then produces an
eigensplitting of $TC(\bS)\phat$ into the $p-1$ summands corresponding
to the powers of the Teichm\"uller character.  The wedge summand
$\epsilon_{i}TC(\bS)\phat$ in the proof of
Theorem~\ref{thm:eigensquare} corresponds to the character
$\omega^{i}$ in the sense that the $p$-adically interpolated Adams
operation $\psi^{\omega(\alpha)}$ acts on it by multiplication by
$\omega^{i}(\alpha)\in \bZp$ for all $\alpha \in (\bZ/p)^{\times}$.

We can imagine that $p$-adically interpolated Adams operations act on
the whole linearization/cyclotomic trace square; we would then obtain
an eigensplitting into $p-1$ squares exactly as in the statement of
Theorem~\ref{thm:eigensquare}.  We regard this as providing evidence
for the following pair of conjectures, which together would give a
conceptual (as opposed to calculational) proof of
Theorem~\ref{thm:eigensquare}. 

\begin{conj}\label{conj:Adams}
Let $R$ be an $E_{\infty}$ ring spectrum.  There exists a 
homomorphism from $\bZpt$ to the composition monoid
$[K(R)\phat,K(R)\phat]$, which is natural in the obvious sense and
satisfies the following properties.
\begin{enumerate}
\item When $R$ is a ring, the restriction to $\bZ\cap \bZpt$ gives
Quillen's Adams operations on the zeroth space.
\item The induced map $\bZpt\to
\Hom(\pi_{*}K(R)\phat,\pi_{*}K(R)\phat)$ is continuous where the
target is given the $p$-adic topology.
\end{enumerate}
\end{conj}

\begin{conj}\label{conj:AdamsCyc}
For $R$ a connective $E_{\infty}$ ring spectrum, the cyclotomic trace\break
$K(R)\phat\to TC(R)\phat$ commutes with the (conjectural) $p$-adically
interpolated Adams operations.
\end{conj}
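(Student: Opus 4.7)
The plan is to reduce Conjecture~\ref{conj:AdamsCyc} to a more structural statement: namely, that the (conjectural) Adams operations on both $K(R)$ and $TC(R)$ can be constructed from a common source, so that compatibility with the cyclotomic trace becomes a functorial consequence rather than a separate computation. Concretely, I would try to produce both families of operations from tensor power operations on the symmetric monoidal $\infty$-category of perfect $R$-modules, and then exploit the fact that the cyclotomic trace can be modeled as a map induced by a symmetric monoidal functor (after inverting suitable things in the source).

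First I would construct the integer Adams operations $\psi^k$ on $K(R)$ via symmetric/exterior power functors $\Sym^n, \Lambda^n$ on $\mathrm{Perf}(R)$, defining $\lambda^n$ operations and then $\psi^k$ by the usual Newton--Girard formulas, as in the constructions of Barwick--Glasman--Mathew--Nikolaus and others for $E_\infty$-rings. On the $TC$ side, I would appeal to the power operations on $THH$ coming from the $n$-fold tensor power functor with its natural $C_n$-action (the Adams operations considered in Bökstedt--Hsiang--Madsen and Goodwillie), which descend to $TR$ and $TC$ through the cyclotomic structure. The second step is to verify that the Bökstedt/Dundas--Goodwillie--McCarthy cyclotomic trace $K(R)\to TC(R)$, which arises from the assembly of trace-like maps out of symmetric monoidal tensor powers, intertwines $\Sym^n$ on the source with the $n$-fold tensor power on the target; this is essentially a naturality statement for the trace. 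Granting this for integer $k$, continuity (Conjecture~\ref{conj:Adams}(ii)) extends the compatibility to all $k\in \bZpt$ on homotopy groups, which is the conclusion required.

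The main obstacle will be the second step: making the cyclotomic trace into a genuinely symmetric monoidal (or at least $\lambda$-compatible) transformation at a sufficiently structured level. Existing constructions of the trace produce a map in the stable category, but the relation of the trace to the tensor power filtration is subtle because the trace is built from the unit $\mathbb{S}\to \End(R)$ via $THH$-like cyclic bar constructions, and one must check that this construction is equivariant for the tensor power $C_n$-actions used in defining $\psi^k$ on $TC$. A secondary difficulty is independence: since Conjecture~\ref{conj:Adams} as stated does not pin down the operations uniquely, one would need to show that any two natural $\bZpt$-actions satisfying (i)--(ii) of Conjecture~\ref{conj:Adams} agree on $K(R)\phat$, perhaps by reducing to representations of $\pi_0 R$ and invoking Quillen's uniqueness for rings; otherwise Conjecture~\ref{conj:AdamsCyc} is really a statement about a specific pair of constructions rather than a universal one.

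Finally, as a sanity check before attempting a general proof, I would verify the compatibility in the two settings already appearing in this paper: $R=\bS$ and $R=\bZ$. In both cases the conclusion of Conjecture~\ref{conj:AdamsCyc} is forced on homotopy groups by Theorem~\ref{thm:tracediag} together with the eigenspace description of the splittings in Section~\ref{sec:review}, so any candidate proof must in particular recover this behavior. This suggests the cleanest route is to first axiomatize the compatibility via the Atiyah--Hirzebruch/descent spectral sequence (where the Adams eigenvalues on $H^s(-;\bZ/p^n(i))$ are determined by the Tate twist), and then lift from the associated graded to the spectrum level using naturality of the trace with respect to étale/cyclotomic descent.
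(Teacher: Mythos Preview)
The statement you are addressing is Conjecture~\ref{conj:AdamsCyc}, and the paper does not prove it; it is explicitly presented as an open conjecture, proposed precisely because the authors could not establish it. There is therefore no proof in the paper to compare your proposal against. The paper's only ``evidence'' for the conjecture is indirect: Theorem~\ref{thm:eigensquare} is deduced calculationally (via the hom-set vanishing results of Section~\ref{sec:review} and the Atiyah--Hirzebruch spectral sequence), and the authors remark that, were Conjectures~\ref{conj:Adams} and~\ref{conj:AdamsCyc} known, Theorem~\ref{thm:eigensquare} would follow conceptually.

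Your proposal is thus not a proof to be checked against the paper but an outline of a possible attack on an open problem. As such it is reasonable in spirit, and you correctly identify the crux: one would need the cyclotomic trace to be compatible with tensor-power or $\lambda$-operations at a sufficiently structured level, not merely as a map in the stable category. You are also right that the conjecture is really about a \emph{specific} pair of constructions, since Conjecture~\ref{conj:Adams} does not uniquely determine the operations; any genuine proof would have to begin by fixing such constructions on both sides. But you should not present this as a proof: the steps you label ``I would verify'' and ``granting this'' are exactly the content of the conjecture, and nothing in the paper (or, to the authors' knowledge at the time of writing, in the literature) supplies them.
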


The preceding conjecture on $p$-adic interpolation of the Adams
operations in $p$-completed algebraic $K$-theory is weaker than the
(known) results in the case of topological $K$-theory in that we are
only asking for continuity on homotopy groups rather than continuity
for (some topology on) endomorphisms.  Nevertheless, it implies a
natural action of $\bZp[(\bZ/p)^{\times}]$ (in the stable category) on
$p$-completed algebraic $K$-theory spectra (of connective ring
spectra), which is all that is needed for the eigensplitting.  To
compare to the splitting used in Theorem~\ref{thm:eigensquare}, note
that for an algebraically closed field of characteristic prime to $p$
or a strict Henselian ring $A$ with $p$ invertible, the Adams
operation $\psi^{k}$ acts on $\pi_{2s}L_{K(1)}K(A)$ by multiplication
by $k^{s}$.  As a consequence, for any scheme satisfying the
hypotheses for Thomason's spectral sequence~\cite[4.1]{ThomasonEtale},
the eigensplitting on homotopy groups is compatible with the
filtration from $E_{\infty}$ in the sense that the subquotient of
$H^{s}(R;\bZ/p^{n}(i))$ comes from the $\omega^{i}$ summand.  For
$R=\bZ[1/p]$, we see that the $\omega^{i}$ summand of $L_{K(1)}K(R)$
has homotopy groups only in degrees congruent to $2i-1$ and $2i-2$ mod
$2(p-1)$ except when $i=0$ where the unit of $\pi_{0}L_{K(1)}(R)$ is
also in the trivial character summand.  As a consequence, we see that
this splitting agrees with the splitting described above for
$L_{K(1)}K(\bZ[1/p])\simeq L_{K(1)}K(\bZ)$.  Specifically, the summand
corresponding to trivial character is $J$ and the summand
corresponding to $\omega^{i}$ is $Y_{i}$ for $i=1,\ldots,p-2$.

The preceding conjectures also leads to the same splitting of
$TC(\bZ)\phat$.  Hesselholt-Madsen~\cite[Th.~D,Add.~6.2]{HM2} shows
that the completion map and cyclotomic trace
\[
TC(\bZ)\phat\to TC(\bZp)\phat\from K(\bZp)\phat
\]
are weak equivalences after taking the connective cover and so in
particular induce weak equivalences after $K(1)$-localization.  The
Quillen localization sequence identifies the homotopy fiber of the map
$K(\bZp)\to K(\bQp)$ as $K(\bF_{p})$.  Since the $p$-completion of
$K(\bF_{p})$ is weakly equivalent to $H\bZp$, its $K(1)$-localization
is trivial.  Combining these maps, we obtain a canonical isomorphism
in the stable category from $L_{K(1)}TC(\bZ)$ to $L_{K(1)}K(\bQp)$.
The Hesselholt-Madsen proof of the Quillen-Lichtenbaum conjecture for
certain local fields~\cite[Th.~A]{HMAnnals} (or in this case,
inspection from the calculation of $TC(\bZ)\phat$), shows that the map
\[
TC(\bZ)\phat\to L_{K(1)}TC(\bZ)\simeq L_{K(1)}K(\bQp)
\]
becomes a weak equivalence after taking $1$-connected covers.  Again
looking at Thomason's spectral sequence, we see that the
conjectural Adams operations would then split $L_{K(1)}K(\bQp)$ into 
summands as follows.  The summand
corresponding to the trivial character is $J\vee
\Sigma^{-1}L_{TC}(0)$, the summand corresponding to $\omega$ is $\Sigma
J \vee \Sigma^{-1}L_{TC}(1)$, and the summand corresponding to
$\omega^{i}$ is $\Sigma^{-1}L_{TC}(i)$ for $i=2,\ldots,p-2$.  A short
argument now shows that the eigensplitting gives the splitting of
$TC(\bZ)\phat$ used in Theorem~\ref{thm:eigensquare}. 

We have proposed relatively strong conjectures in~\ref{conj:Adams}
and~\ref{conj:AdamsCyc}; for a proof of Theorem~\ref{thm:eigensquare},
it would be enough for the operations to exist and be compatible just
for regular rings.  The work of Riou~\cite{Riou-AdamsOperations} makes
the existence of such operations plausible.  But given the work of
Dundas~\cite{Dundas-RelK}, the more general conjectures above (at
least for connective ring spectra $R$ with $\pi_{0}R$ regular) are not
far removed from the corresponding conjectures for rings.  We do also
note that work on non-existence of
determinants~\cite[2.3]{AusoniDundasRognes-Monopole} and \cite[Proof
of~3.7]{Waldhausen-ManifoldApproach} is often cited as evidence
against the existence of Adams operations on the algebraic $K$-theory
of ring spectra.

\section{Low degree computations}\label{sec:mess}

\bgroup
\newcommand{\tablesize}{\textstyle}
\newcommand{\tablessize}{\scriptstyle}
\newcommand{\ct}[1]{{\tiny\raisebox{1ex}{\textcircled{\raisebox{-.3ex}{#1}}}}}
\newcommand{\BMZ}{$\tablesize\bZ$}
\newcommand{\mz}[1]{\tablesize{\bZ}/{#1}}
\newcommand{\mzp}[2]{\tablesize(\mz{#1})^{\tablessize #2}}
\newcommand{\p}{&\hbox to 0pt{\hss${\tablesize\oplus}$\hss}&}
\newcommand{\T}{\mathbin{\tablesize\times}}
\newcommand{\ug}[1][{?}]{\tablesize{\relax[#1\relax]}}
\newcommand{\kt}[1]{$\tablesize K_{#1}(\bZ)$}
\newcommand{\lf}[1]{\ifcase#1
$\mz{16}\T\mz3\T\mz5$
\or$\mz8\T\mz9\T\mz7$
\or$\mz{32}\T\mz2\T\mz3\T\mz5$
\or$\mz8\T\mz2\T\mz3\T\mz{11}$
\or$\mz8\T\mzp22$
\or$\mz{32}\T\mzp23$
\fi}

\begin{table}[p]
\caption{The homotopy groups of $K(\bS)$ in low degrees}
\label{table}
\scalebox{0.833}{%
\begin{tabular}{r|l*{6}{cl}}
\hline
$n$&\multicolumn{10}{c}{$\pi_{n}K(\bS)$}\vrule height1.25em width0pt depth.5em& \\
\hline\hline\vrule height3ex width0pt depth0pt
 0&\BMZ&&            &&            &&      &&            &&\\[.5ex]
 1&  &&$\mz2$      &&            &&      &&            &&\\[.5ex]
 2&  &&$\mz2$      &&            &&      &&            &&\\[.5ex]
 3&  &&$\mz8\T\mz3$\p$\mz2$      &&      &&            &&\\[.5ex]
 4&$\tablesize0$\\[.5ex]
 5&\BMZ&&            &&            &&      &&            &&\\[.5ex]
 6&  &&$\mz2$      &&            &&      &&            &&\\[.5ex]
 7&  &&\lf0        \p$\mz2$      &&      &&            &&\\[.5ex]
 8&  &&$\mzp22$    &&            &&      &&            \p\kt8\\[.5ex]
 9&\BMZ\p$\mzp23$    \p$\mz2$      &&      &&            &&\\[.5ex]
10&  &&$\mz2\T\mz3$\p\lf4        &&      &&            &&\\[.5ex]
11&  &&\lf1        \p$\mz2$      \p$\mz3$&&            &&\\[.5ex]
12&  &&            \p$\mz4$      &&      &&            \p\kt{12}\\[.5ex]
13&\BMZ\p$\mz3$      &&            &&      &&            &&\\[.5ex]
14&  &&$\mzp22$    \p$\mz4$      \p$\mz3$\p$\mz9$    &&\\[.5ex]
15&  &&\lf2        \p$\mzp22$    &&      &&            &&\\[.5ex]
16&  &&$\mzp22$    \p$\mz8\T\mz2$&&      \p$\mz3$      \p\kt{16}\\[.5ex]
17&\BMZ\p$\mzp24$    \p$\mzp22$    &&      &&            &&\\[.5ex]
18&  &&$\mz8\T\mz2$\p\lf5        &&      \p$\mz3\T\mz5$&&\\[.5ex]
19&  &&\lf3        \p$\ug[64]$   &&      &&            &&\\[.5ex]
20&  &&$\mz8\T\mz3$\p$\ug[128]$  &&      \p$\mz3$      \p\kt{20}\\[.5ex]
21&\BMZ\p$\mzp22$    \p$\ug[16]$   \p$\mz3$&&            &&\\[.5ex]
22&  &&$\mzp22$    \p$\ug[2^{?}]$&&      \p$\mz3$      \p$\mz{691}$\\[.5ex]
\hline
\multicolumn{1}{l}{}&\multicolumn{1}{l}{\ct{1}} && \multicolumn{1}{l}{\ct{2}} && \multicolumn{1}{l}{\ct{3}} && \multicolumn{1}{l}{\ct{4}} && \multicolumn{1}{l}{\ct{5}} && \multicolumn{1}{l}{\ct{6}}
     \\
\end{tabular}}

\smallskip

\begin{minipage}{\hsize}
\small
The table compiles the results reviewed in
Proposition~\ref{prop:calcsum} (q.v.~for sources) and
\cite[5.8]{Rognes2} into the computation of $\pi_{n}K(\bS)$ for $n\leq
22$.  The description of $\pi_{n}K(\bS)$ is divided into columns:
\renewcommand{\labelenumi}{\arabic{enumi}.}
\renewcommand{\labelenumii}{\arabic{enumii}.}
\begin{enumerate}
\item The non-torsion part
\item The contribution
from the torsion of $\bS$ 
\item The remaining $2$-torsion
(from \cite{Rognes2})
\item The contribution from
$\Sigma c$ for odd primes
\item The torsion contribution
from $\fubar$ for odd primes
\item The torsion
contribution from $\tK(\bZ)$ for odd primes
\end{enumerate}

\smallskip

Presently $K_{4n}(\bZ)$ is unknown for $n>1$, conjectured to be 0
(the Kummer-Vandiver conjecture) and if non-zero is a finite group with
order a product of irregular primes, each of which is $>10^{8}$.

\smallskip

Summands denoted as $[m]$ are finite groups of order $m$ whose
isomorphism class is not known.
\end{minipage}
\end{table}
\egroup

Theorem~\ref{thm:main} describes the $p$-torsion in $K(\bS)$ in terms
of the $p$-torsion in various pieces.  For convenience, we review in
Proposition~\ref{prop:calcsum} below what is known about the homotopy
groups of these pieces at least up to the range in which \cite{Rognesp} describes
the homotopy groups of $\bCPimbar\simeq \Sigma^{-1}\fubar$.  As a consequence of
Theorem~\ref{thm:main}, irregular primes potentially contribute in
degrees divisible by 4 but otherwise make no contribution to the
torsion of $K(\bS)$ until degree $22$.  Thus, $\pi_{*}K(\bS)$ in
degrees $\leq 21$ not divisible by 4 is fully computed (up to some
2-torsion extensions) by the work of
Rognes~\cite{Rognes2,Rognesp}.  For convenience, 
we assemble the computation of 
$\pi_{*} K(\bS)$ for $*\leq 22$ in Table~\ref{table}
on page~\pageref{table}.

\begin{prop}\label{prop:calcsum}
The $p$-torsion groups $\ptor(\pi_{*}\bS)$, $\ptor(\pi_{*}c)$, 
$\ptor(\pi_{*}\tK(\bZ))$, and
$\ptor(\pi_{*}\fubar)$ are known in at least the following ranges,
as follows.
\begin{enumerate}
\item $\pi_{*}\bS$, $\pi_{*}j$, see for example
\cite[1.1.13]{Ravenel-Green}. 
$\pi_{*}\bS$ splits as
\[
\pi_{*}\bS=\pi_{*}j\oplus \pi_{*}c.
\]
$\ptor(\pi_{k}j)$
is zero unless $2(p-1)$ divides $k+1$, in which case it is cyclic of order
$p^{s+1}$ where 
$k+1=2(p-1)p^{s}m$
for $m$ relatively prime to $p$.
See below for $\pi_{*}c$.
\item $\pi_{*}c$, see for example \cite[1.1.14]{Ravenel-Green}.
In degrees $\leq 6p(p-1)-6$, $\ptor(\pi_{*}c)$ is $\bZ/p$ in the
following degrees and zero in all others. (In the table, $\alpha_{1}\in
\pi_{2p-3}j$.)\par
\bigskip
\begin{tabular}{ll}
\hline
Generator&Degree\vrule height1.25em width0pt depth.5em \\
\hline\hline\vrule height3ex width0pt depth0pt
$\beta_{1}$                &$2p(p-1)-2$     \\
$\alpha_{1}\beta_{1}$      &$2(p+1)(p-1)-3$ \\
$\beta_{1}^{2}$            &$4p(p-1)-4$     \\
$\alpha_{1}\beta_{1}^{2}$  &$2(2p+1)(p-1)-5$\\
$\beta_{2}$                &$2(2p+1)(p-1)-2$\\
$\alpha_{1}\beta_{2}$      &$4(p+1)(p-1)-3$ \\
$\beta_{1}^{3}$            &$6p(p-1)-6$     \\
\end{tabular}	                              
\bigskip

\item $\pi_{*}\tK(\bZ)$, see for example \cite[\S VI.10]{Weibel-KBook} or 
Section~\ref{sec:review}.
$\ptor(\pi_{*}\tK(\bZ))$ is zero in odd degrees. 
If $p$ is regular, then $\ptor(\pi_{2k}\tK(\bZ))=0$.  If $p$ satisfies
the Kummer-Vandiver condition, then 
\[
\ptor(\pi_{4k}\tK(\bZ))=0 \quad \textrm{and} \quad \ptor(\pi_{4k+2}\tK(\bZ))=\bZp/(\tfrac{B_{2k+2}}{2k+2}),
\]
where $B_{n}$ denotes the Bernoulli number, numbered by the convention
$\tfrac{t}{e^{t}-1}=\sum B_{n}\tfrac{t^{n}}{n!}$.   If $p$ does not
satisfy the Kummer-Vandiver condition then 
$\ptor(\pi_{4k}\tK(\bZ))=0$ for $k=1$ and is an unknown finite group
for $k>1$, while $\ptor(\pi_{4k+2}\tK(\bZ))$ is an unknown group of
order $\#(\bZp/(B_{2k+2}/(2k+2))$ for all $k$.  
\smallskip

\item $\pi_{*}\fubar$, see \cite[4.7]{Rognesp}.
\begin{itemize}
\item In odd degrees $\leq |\beta_{2}|-2 = 2(2p+1)(p-1)-4$,
\[
\ptor(\pi_{2n+1}\fubar)=\bZ/p
\]
in degrees $n=p^{2}-p-1+m$ or $n=2p^{2}-2p-2+m$ for $1\leq m\leq p-3$
and zero otherwise.  

\item In even degrees $\leq 2p(p-1)$, 
\[
\ptor(\pi_{2n}\fubar)=\bZ/p
\]
for $m(p-1)< n< mp$ for $2\leq m\leq p-1$ and zero otherwise except that 
\[
\ptor(\pi_{2(p(p-1)-1)}\fubar)=0.
\] 

Beyond this range, \cite[4.7]{Rognesp} only describes the size of the
group.  In all even degrees $\leq |\beta_{2}|-2 = 2(2p+1)(p-1)-4$, 
the size of $\ptor(\pi_{2n}\fubar)$ is 
\[
\#(\ptor(\pi_{2n}\fubar))=p^{a(n)+b(n)-c(n)-d(n)+e(n)}
\]
where
\begin{align*}
a(n)&=\lfloor (n-1)/(p-1) \rfloor\\
b(n)&=\lfloor (n-1)/(p(p-1)) \rfloor\\
c(n)&=\lfloor n/p \rfloor\\
d(n)&=\lfloor n/p^{2} \rfloor
\end{align*}
and 
\[
e(n) = 
\begin{cases}
  +1, & \text{if } n=p^{2}-2+mp \text{ for } 1\leq m\leq p-3, \\
  -1, & \text{if } n=p-1+mp \text{ for } m\geq p-2 \\
   0, & \text{otherwise}.
\end{cases}
\]
Here $\lfloor x\rfloor$ denotes the greatest integer $\leq x$. The
formulas $a,b,c,d$ above count the number of positive integers $< n$
(in $a$ and $b$) or $\leq n$ (in $c$ and $d$) that are divisible by
the denominator. 

\medskip

The formula above shows that for $p=3$, $\pi_{14}(\fubar)$
has order 9.  Rognes~\cite[4.9(b)]{Rognesp} shows that this group is
$\bZ/9$. 

\end{itemize}
\end{enumerate}
\end{prop}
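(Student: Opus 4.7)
The proposition is a compilation of known results from the literature rather than a novel theorem, so my proof plan is largely to assemble and verify the cited computations, adapting notation where needed. The four parts can be treated independently and in order.

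For part (i), the plan is to invoke the classical splitting $\pi_{*}\bS\phat \iso \pi_{*}j \oplus \pi_{*}c$ recorded in Section~\ref{sec:review}, which comes from Bousfield's $K(1)$-localization theory and the image-of-$J$ story. The calculation of $\ptor(\pi_{k}j)$ reduces to Adams's computation of the image of $J$ in odd degrees, which gives a cyclic group of order equal to the $p$-adic valuation factor of the denominator of $B_{k+1}/(2k+2)$; the von Staudt--Clausen theorem converts this into the stated $p^{s+1}$ formula with $k+1 = 2(p-1)p^{s}m$. This part is a direct citation of \cite[1.1.13]{Ravenel-Green}.

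For part (ii), I would reproduce the table from \cite[1.1.14]{Ravenel-Green}, which encodes the first few $\beta$-elements in the Greek-letter family at an odd prime together with their products with $\alpha_{1}$. The dimensions of $\beta_{1}$, $\beta_{2}$, $\alpha_{1}$ are standard ($|\beta_{s}| = 2s(p^{2}-1)-2s$ roughly, but more precisely the formulas given), and one just lists all nonzero $\alpha_{1}^{i}\beta_{1}^{j}\beta_{2}^{k}$ of total degree $\leq 6p(p-1)-6$, using that these are the only $p$-torsion classes in this range of $\pi_{*}c$. The main thing to check is the range of validity: the next class $\beta_{3}$ appears in degree $6p(p-1)-2$, so all contributions above this degree are excluded.

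Part (iii) is the most delicate, but it follows directly from the analysis of $\tK(\bZ)\simeq y_{0}\vee\cdots\vee y_{p-2}$ in Section~\ref{sec:review}. The homotopy groups of the $y_{i}$ summands are controlled by the eigenspaces $\epsilon_{-i}A_{\infty}$ of class groups via the four-term exact sequence of Dwyer--Mitchell. For regular primes $A_{\infty}=0$, giving vanishing of the torsion; under Kummer--Vandiver one has $\epsilon_{i}A_{0}=0$ for $i$ even, so only the odd $y_{2k+1}$ contribute, with homotopy groups given by the $p$-adic $L$-function values $L_{p}(-n,\omega^{2k+2}) = -B_{n+1}/(n+1)$ of Iwasawa. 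For general primes, the Mazur--Wiles theorem \cite{MazurWiles} controls the order of the even-degree torsion by these same Bernoulli-number expressions, while $\pi_{4k}$ remains unknown. I would simply reproduce this organization, citing \cite[\S VI.10]{Weibel-KBook}.

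For part (iv), the plan is to cite the computation of $\pi_{*}\bCPimbar$ carried out in \cite[4.7]{Rognesp} and then shift by one to get $\pi_{*}\Sigma\bCPimbar$. The stated ranges match exactly Rognes's ranges, so there is no extension needed, only a careful transcription of the exact groups in small degrees and of the counting formula $a+b-c-d+e$ expressing the $p$-adic valuation of the order in general. The formula itself is derived by Rognes from the Snaith splitting of $\bCPim$ together with the $BP$-based Adams spectral sequence; one checks the special correction term $e(n)$ on the Rognes tables. The main thing to watch in this part is getting the edges of the ranges correct and noting the one exceptional class $\bZ/9$ in $\pi_{14}\Sigma\bCPimbar$ at $p=3$ identified in \cite[4.9(b)]{Rognesp}. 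The hardest step overall is (iv), since it requires carefully importing and organizing Rognes's detailed tables and counting functions without introducing off-by-one errors in the indexing between $\bCPimbar$ and $\Sigma\bCPimbar$.
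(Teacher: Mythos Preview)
Your proposal is correct and matches the paper's treatment: the proposition is a summary of results from the literature with citations embedded in the statement itself, and the paper gives no separate proof. Your plan to assemble and verify the cited computations, part by part, is exactly the right approach and aligns with how the paper presents this material.
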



\FloatBarrier

\bibliographystyle{plain}

\end{document}